\newcommand{\abs}[1]{\lvert#1\rvert}
\newcommand\dx{\, d x}
\newcommand\dmu{\, d \mu}
\newcommand\dt{\, d t}
\newcommand\ds{\, d s}
\newcommand\R{\mathbb R}
\newcommand\dH{\, d \mathcal{H}^{n-1}}
\newcommand\wstar{
\overset{\ast}{\rightharpoondown}}
\newcommand{\edge}{\hspace{0.1em}\mbox{\LARGE$\llcorner$}\hspace{0.05em}}
\theoremstyle{definition}
\newtheorem{definition}{Definition}[section]
\newtheorem{remark}[definition]{Remark} 
\theoremstyle{plain}
\newtheorem{theorem}[definition]{Theorem}
\newtheorem{lemma}[definition]{Lemma}
\newtheorem{corollary}[definition]{Corollary}
\newtheorem{proposition}[definition]{Proposition}
\numberwithin{equation}{section}
\crefname{theorem}{Theorem}{Theorems}
\Crefname{theorem}{Theorem}{Theorems}
\crefname{lemma}{Lemma}{Lemmas}
\Crefname{lemma}{Lemma}{Lemmas}
\crefname{proposition}{Proposition}{Propositions}
\Crefname{proposition}{Proposition}{Propositions}
\crefname{corollary}{Corollary}{Corollaries}
\Crefname{corollary}{Corollary}{Corollaries}
\crefname{definition}{Definition}{Definitions}
\Crefname{definition}{Definition}{Definitions}
\crefname{remark}{Remark}{Remarks}
\Crefname{remark}{Remark}{Remarks}
\crefname{example}{Example}{Examples}
\Crefname{example}{Example}{Examples}
\def\Xint#1{\mathchoice
   {\XXint\displaystyle\textstyle{#1}}%
   {\XXint\textstyle\scriptstyle{#1}}%
   {\XXint\scriptstyle\scriptscriptstyle{#1}}%
   {\XXint\scriptscriptstyle\scriptscriptstyle{#1}}%
   \!\int}
\def\XXint#1#2#3{{\setbox0=\hbox{$#1{#2#3}{\int}$}
     \vcenter{\hbox{$#2#3$}}\kern-.5\wd0}}
\def\dashint{\Xint-}
\def\Yint#1{\mathchoice
    {\YYint\displaystyle\textstyle{#1}}%
    {\YYint\textstyle\scriptstyle{#1}}%
    {\YYint\scriptstyle\scriptscriptstyle{#1}}%
    {\YYint\scriptscriptstyle\scriptscriptstyle{#1}}%
      \!\iint}
\def\YYint#1#2#3{{\setbox0=\hbox{$#1{#2#3}{\iint}$}
    \vcenter{\hbox{$#2#3$}}\kern-.51\wd0}}
\def\longdash{{-}\mkern-3.5mu{-}} 
\def\fiint{\Yint\longdash}
\renewcommand{\div}{\operatorname{div}}
\DeclareMathOperator*{\esssup}{ess\,sup\,}
\DeclareMathOperator\BV{BV}
\DeclareMathOperator{\loc}{loc}
\DeclareMathOperator\supp{supp}
\DeclareMathOperator{\sign}{sign}
\DeclareMathOperator{\dist}{dist}
\title[Parabolic problems of linear growth]{Characterizations and properties of solutions to parabolic problems of linear growth}
\author{Theo Elenius}
\address{Department of Mathematics, Aalto University, P.O. Box 11100, FI-00076 Aalto, Finland}
\email{theo.elenius@aalto.fi}
\subjclass[2020]{35K20, 35K67, 35K92}
\keywords{Parabolic problems of linear growth, total variation flow, weak and variational solution, Cauchy-Dirichlet problem, time mollification}
\begin{document}
\begin{abstract}
    We consider notions of weak solutions to a general class of parabolic problems of linear growth, formulated independently of time regularity. Equivalence with variational solutions is established using a stability result for weak solutions. A key tool in our arguments is approximation of parabolic BV functions using time mollification and Sobolev approximations. We also prove a comparison principle and a local boundedness result for solutions. When the time derivative of the solution is in $L^2$ our definitions are equivalent with the definition based on the Anzellotti pairing.
\end{abstract}
\maketitle
\section{Introduction}
This article studies nonlinear parabolic equations of the type
\begin{equation}
    \partial_t u - \div \left( D_\xi f(x,Du)\right)= 0 \quad \text{on} \quad \Omega_T=\Omega\times(0,T), \label{eq 1}
\end{equation}
where $f:\Omega\times\R^n\to[0,\infty)$ is a functional of linear growth, $\Omega\subset\R^n$ is a bounded Lipschitz domain and $T>0$. Due to the linear growth of the Lagrangian, the natural function space for solutions is the space of functions of bounded variation. In this setting, the weak gradient $Du$ of a function $u$ is a vector-valued Radon measure. Hence, the usual definition of a weak solution via integration by parts does not apply for equations of the type \eqref{eq 1}.

The inherent difficulties are already present for the model equation  
\begin{equation}
    \label{TVF}
    \partial_t u - \div \left( \dfrac{Du}{\lvert Du\rvert} \right)= 0,
\end{equation}
which corresponds to the choice $f(x,\xi)=\lvert\xi\rvert$, known as the total variation flow (TVF). Two main approaches have been developed to define solutions to \eqref{eq 1}. On the one hand, $u$ is said to be a variational solution of \eqref{TVF} if $u$ satisfies variational inequalities of the type
\begin{align*}
    & \int_0^T \|Du\|(\Omega)\dt + \iint_{\Omega_T} u \partial_t \varphi \dx\dt \leq  \int_0^T \|D(u+\varphi)\|(\Omega)\dt, 
\end{align*}
for a suitable class of test functions $\varphi$. This variational approach to parabolic problems of linear growth dates back to Lichnewsky and Temam \cite{LicT78}, who applied it to the time-dependent minimal surface equation. For the total variation flow and more general functionals of linear growth, existence and uniqueness of variational solutions have been studied in a series of papers \cite{BDM2015,BoegelDuzSchevObstacle:2016,BoegelDuzSchevTime:2016,BDSS:2019}.

On the other hand, one can define weak solutions by applying the Anzellotti pairing \cite{Anzellotti:1984}. For this approach we refer to the monograph by Andreu, Caselles and Mazón \cite{Andreu-Caselles-Mazon:book}. The construction of the Anzellotti pairing is based on a product rule which, in the case of parabolic equations, requires the time derivative of the solution to lie in $L^2$. Hence, an a priori assumption on the time derivative is necessary in the definition of a weak solution. Moreover, weak solutions do not necessarily attain the prescribed boundary condition in the sense of traces. Consequently, the boundary data have to be taken in a suitably weak sense. One possibility is to require information on a weak normal trace related to a Gauss-Green formula for the Anzellotti pairing, which again depends on time regularity.

Existence of weak solutions to the Cauchy-Dirichlet problem for the total variation flow, with time-independent boundary data has been established in \cite{ABCM, Andreu-Caselles-Diaz-Mazon:2002}. The corresponding question has been studied for a more general family of equations in \cite{Andreu-Caselles-Mazon:2002, Moll2005}. For boundary data that is irregular in time it is not clear whether weak solutions have the required regularity in time to be able to apply the Anzellotti pairing. On the other hand, the definition of variational solutions requires no a priori regularity in time. Starting from minimal assumptions on solutions is natural in several contexts, for instance, when studying regularity theory or subsolutions and supersolutions.

The purpose of this article is to develop a notion of weak solution for parabolic problems of linear growth that does not rely on a priori time regularity, and to establish its equivalence with the variational approach. We investigate this question for the Cauchy-Dirichlet problem 
\begin{equation}
    \label{PDE}
    \begin{cases}
       \partial_tu-\div\big( D_\xi f(x,Du) \big)=0&\mbox{in }\Omega\times(0,T),\\
       u(x,t)=g(x,t)&\mbox{on }\partial\Omega\times(0,T),\\
       u(x,0)=u_o(x)&\mbox{in }\Omega.
     \end{cases}
\end{equation}
In this generality, it is not clear what the correct definition of weak solution is. In this work, we propose two notions of weak solutions, both of which are independent of the time regularity of the solution. To our knowledge, this is the first work in this direction and in this setting. The first notion is based on the approach using the Anzellotti pairing, and is given in \Cref{def: weak solution}. The dependency on the time derivative is eliminated in two ways. This is achieved, first, by a formal integration by parts in time for the Anzellotti pairing, and second, by replacing the normal trace condition for the boundary data with a condition using difference quotients of traces. The second characterization is an intermediate form combining the previous definition and the notion of variational solution, and is presented in the statement of \Cref{equivalences part 1}. The two characterizations of weak solutions are shown to be equivalent, and reduce to the classical definition based on the Anzellotti pairing, whenever the pairing is applicable. Both notions of weak solutions discussed in this article apply to equations with differentiable Lagrangians as well as to 1-homogeneous functionals, as in \cite{GornyMazon}.

We will also prove the equivalence of weak and variational solutions to \eqref{PDE}. Showing that weak solutions are variational solutions is straightforward, while the other direction is more difficult. For differentiable functionals the full equivalence is shown by differentiating the variational inequality in a suitable way. On the other hand, $1$-homogeneous functionals, such as the total variation, fail to be differentiable at the origin. For such functionals, we show an existence result for weak solutions. To do this, we approximate the functional with differentiable functionals and establish stability of weak solutions under this approximation in \Cref{stability theorem}. A comparison principle implies the uniqueness of variational solutions, and hence the existence result for weak solutions guarantees the equivalence of weak and variational solutions. The equivalence of weak and variational solutions for the total variation flow has previously been studied by applying duality methods in \cite{KinnunenScheven2022}.

For differentiable functionals a third approach to weak solutions is possible, based on the Euler-Lagrange equations for linear growth elliptic equations developed by Anzellotti in \cite{Anzellotti:1985}. In this setting, no Anzellotti pairing is needed, but the equation has to be tested with a larger class of functions than the Sobolev class. Instead, the allowed test functions are $\BV$ functions with a controlled jump part. In \Cref{prop: Anzellotti type euler equations}, we extend this approach to the parabolic setting. However, this approach seems to be applicable only in the case when the time derivative of the solution is a $L^2$ function.

A key tool in several arguments are two types of approximations of parabolic $\BV$ functions. The first is a mollification in time. We prove the area-strict convergence of such time mollifications. Combined with a Reshetnyak continuity theorem, this guarantees the convergence of the functional in the variational inequality under the time mollification. Secondly, we prove a parabolic version of a classical result concerning the approximation of functions of bounded variation with Sobolev functions. For such results in the time-independent setting we refer to the monographs \cite{AmbrosioFuscoPallara,Giusti1984} and \cite[Lemma 1]{KristensenRindler2010Young}. This approximation scheme plays an important role in avoiding the explicit use of the Anzellotti pairing.

The final contribution of this article is to prove the local boundedness of local variational solutions to \eqref{eq 1}. Based on an energy estimate in \Cref{prop: energy estimate}, we prove a quantitative estimate for local boundedness under an integrability assumption in \Cref{prop: local boundedness}. The main novelty is that our iteration technique does not require an a priori local boundedness assumption, which has been previously been the case for the total variation flow and singular parabolic equations of $p$-Laplacian type \cite{DiBeGV2012, DiBeGiaKla2017}. While continuity is not attainable, we discuss the existence of semicontinuous representatives for solutions based on our estimates. 

\subsection*{Acknowledgement} The  author would like to thank Juha Kinnunen for proposing the problem and for helpful discussions on the subject. 
\section{Preliminaries}
\label{sect: prelim}
\subsection{Functions of bounded variation}
In this article, $\Omega\subset\R^n$ is a bounded Lipschitz domain unless explicitly stated otherwise. A function $u\in L^1(\Omega)$ is said to be of bounded variation in $\Omega$ if its distributional gradient $Du$ is a finite $\R^n$-valued Radon measure on $\Omega$, and the space of functions of bounded variation will be denoted by $\BV(\Omega)$. The total variation measure of $Du$ will be denoted by $\|Du\|$, and is given by
\begin{equation*}
  \|Du\|(B)=\sup\bigg\{\sum_{i=1}^\infty |Du(B_i)|\colon
  B_i \mbox{ are pairwise disjoint Borel sets with }
  B=\bigcup_{i=1}^\infty B_i\bigg\},
\end{equation*}
for Borel sets $B\subset\Omega$. For an open set $U\subset\Omega$, we have
\begin{equation*}
    \| Du\|(U)= \sup\bigg\{ \int_\Omega u \div \Phi \dx : \Phi\in C_0^1(U,\R^n), \|\Phi\|_\infty \leq 1 \bigg\},
\end{equation*}
see \cite[Prop. 3.6]{AmbrosioFuscoPallara}. The distributional gradient $Du$ decomposes into an absolutely continuous part $D^a u$, with respect to the Lebesgue measure, and a singular part $D^su$. We shall use the notation $\nabla u$ for the Radon-Nikodym derivative of $D^a u$ with respect to the Lebesgue measure. Hence with this decomposition we have
\begin{equation*}
    Du = \nabla u \mathcal L^n + D^s u.
\end{equation*}
By \cite[Theorem 3.87]{AmbrosioFuscoPallara} there exist bounded inner and outer trace operators 
\begin{equation*}
    T_\Omega: \BV(\Omega)\to L^1(\partial\Omega) \quad \text{ and } \quad T_{\R^n\backslash \Omega}: \BV(\R^n\backslash\overline{\Omega})\to L^1(\partial\Omega).
\end{equation*}
Moreover, by \cite[Theorem 3.88]{AmbrosioFuscoPallara}, the trace operator $u\mapsto T_\Omega u$ is continuous between $\BV(\Omega)$ and $L^1(\partial\Omega)$ when $\BV(\Omega)$ is endowed with the topology induced by strict convergence. Moreover, the following extension result is given.
\begin{lemma}[{\cite [Lemma 3.89]{AmbrosioFuscoPallara}}]
\label{lemma: derivative of piecewise def function}
Let $u\in \BV(\Omega)$ and $v\in \BV(\R^n\backslash\overline{\Omega})$. Then the function
\begin{equation*}
    w(x)
    =\begin{cases}
        u(x), & \text{ for } x\in\Omega\\
        v(x) , & \text{ for } x\in \R^n\backslash\overline{\Omega}.
    \end{cases}
\end{equation*}
belongs to $\BV(\R^n)$, and its distributional gradient is given by
\begin{equation*}
    Dw= Du+Dv+\big(T_{\R^n\setminus\overline\Omega}v-T_\Omega
    u\big)\nu_\Omega\mathcal{H}^{n-1}\edge
    \partial\Omega,
\end{equation*}
where $\nu_\Omega$ denotes the generalized inner unit normal to $\Omega$. In the above we interpret $Du$ and $Dv$ as measures on the whole of $\R^n$, concentrated on $\Omega$ and $\R^n\backslash\overline{\Omega}$, respectively.
\end{lemma}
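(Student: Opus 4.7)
The plan is to verify the claimed identity for $Dw$ directly against the definition of the distributional gradient, by testing against arbitrary $\Phi\in C_c^1(\R^n,\R^n)$ and applying the Gauss-Green formula on each of the two pieces $\Omega$ and $\R^n\setminus\overline{\Omega}$. Once we establish the formula as an equality of candidate measures, the fact that the right-hand side is a finite $\R^n$-valued Radon measure (the jump term has finite mass because $\partial\Omega$ is $(n{-}1)$-rectifiable of finite $\mathcal{H}^{n-1}$-measure and the traces lie in $L^1(\partial\Omega)$) will give $w\in\BV(\R^n)$ automatically.

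Concretely, I would split
\begin{equation*}
    \int_{\R^n} w\,\div\Phi\dx
    = \int_{\Omega} u\,\div\Phi\dx + \int_{\R^n\setminus\overline{\Omega}} v\,\div\Phi\dx,
\end{equation*}
and then apply the Gauss-Green formula for $\BV$ functions on Lipschitz domains. On $\Omega$ this yields
\begin{equation*}
    \int_{\Omega} u\,\div\Phi\dx
    = -\int_{\Omega}\Phi\cdot dDu - \int_{\partial\Omega}(T_\Omega u)\,\Phi\cdot\nu_\Omega\dH,
\end{equation*}
using that the exterior unit normal to $\Omega$ is $-\nu_\Omega$. On $\R^n\setminus\overline{\Omega}$ the exterior unit normal at $\partial\Omega$ equals $\nu_\Omega$, so
\begin{equation*}
    \int_{\R^n\setminus\overline{\Omega}} v\,\div\Phi\dx
    = -\int_{\R^n\setminus\overline{\Omega}}\Phi\cdot dDv + \int_{\partial\Omega}(T_{\R^n\setminus\overline{\Omega}}v)\,\Phi\cdot\nu_\Omega\dH.
\end{equation*}
Summing the two identities and rearranging gives exactly
\begin{equation*}
    \int_{\R^n} w\,\div\Phi\dx
    = -\int_{\R^n}\Phi\cdot d\bigl(Du+Dv+(T_{\R^n\setminus\overline{\Omega}}v-T_\Omega u)\nu_\Omega\mathcal{H}^{n-1}\edge\partial\Omega\bigr),
\end{equation*}
which is the asserted representation of $Dw$ by the definition of distributional gradient.

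The only delicate point is the application of the Gauss-Green formula with trace terms on the two domains; this is where the Lipschitz regularity of $\partial\Omega$ and the existence of the inner/outer traces cited earlier are essential. A clean way to justify it is by approximation: take sequences $u_k\in C^\infty(\overline{\Omega})$ and $v_k\in C^\infty(\overline{\R^n\setminus\Omega})$ converging strictly to $u$ and $v$, so that the trace maps converge in $L^1(\partial\Omega)$. For smooth $u_k,v_k$ the classical divergence theorem applies pointwise on each side of $\partial\Omega$, and passing to the limit (using boundedness of $\Phi$, strict convergence on each piece, and $L^1$-continuity of traces) recovers the formula for $u,v$. The rest of the argument is bookkeeping: the jump term is concentrated on $\partial\Omega$ so it is mutually singular with the interior parts, and extending $Du$ and $Dv$ by zero to the respective complements is consistent because each is concentrated on its domain of definition.
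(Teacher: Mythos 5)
The paper does not supply its own proof of this lemma; it cites \cite[Lemma 3.89]{AmbrosioFuscoPallara}. Your strategy, namely splitting the defining integral over $\Omega$ and $\R^n\setminus\overline\Omega$, applying the $\BV$ Gauss--Green formula with traces on each Lipschitz piece, summing, and reading off $Dw$, is exactly the standard argument, and the two Gauss--Green identities you write down are each correct with the sign conventions you state.

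The gap is a sign error in the final ``rearranging'' step. Summing your two identities literally gives
\begin{equation*}
\int_{\R^n} w\,\div\Phi\dx
  = -\int_\Omega\Phi\cdot dDu
    -\int_{\R^n\setminus\overline\Omega}\Phi\cdot dDv
    +\int_{\partial\Omega}\bigl(T_{\R^n\setminus\overline\Omega}v - T_\Omega u\bigr)\Phi\cdot\nu_\Omega\dH,
\end{equation*}
whereas the right-hand side you claim this equals expands to the same two interior terms \emph{minus} (not plus) the boundary integral. Carrying your computation through consistently, the conclusion is in fact
\begin{equation*}
    Dw = Du + Dv + \bigl(T_\Omega u - T_{\R^n\setminus\overline\Omega}v\bigr)\nu_\Omega\,\mathcal H^{n-1}\edge\partial\Omega,
\end{equation*}
with the opposite sign on the jump term relative to the lemma as quoted. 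You can confirm this on $\Omega=(0,1)\subset\R$, $u\equiv 1$, $v\equiv 0$: then $w=\chi_{(0,1)}$ and $Dw=\delta_0-\delta_1$, while the lemma's formula with the inner normal ($\nu_\Omega(0)=1$, $\nu_\Omega(1)=-1$) evaluates to $(0-1)(\delta_0-\delta_1)=-\delta_0+\delta_1$. So your derivation actually exposes a sign inconsistency in the statement as transcribed (it is correct only if $\nu_\Omega$ is read as the \emph{outer} normal), but as written your final step does not follow from the previous ones. Redo the rearrangement carefully and reconcile it against the one-dimensional example before concluding.
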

We recall the following lemma related to traces of $\BV$ functions. 
\begin{lemma}[{\cite[Lemma 7.2]{CDMLP1988}}]
\label{cut off trace lemma}
Let $\Omega_\epsilon=\{x\in\Omega:\dist(x,\partial\Omega)>\epsilon\}$ for $\epsilon>0$. Then for any $u\in \BV(\Omega)$ we have
\begin{equation*}
    \limsup_{\epsilon\to 0}\frac{1}{\epsilon}\int_{(\Omega\backslash\Omega_\epsilon)}\lvert u\rvert \dx \leq \int_{\partial\Omega}\lvert T_\Omega u\rvert \,d\mathcal{H}^{n-1}.
\end{equation*}
Moreover, there exists a constant $c=c(\partial\Omega)$ such that
\begin{equation*}
    \sup_{\epsilon>0} \frac{1}{\epsilon}\int_{\Omega\backslash\Omega_\epsilon}\lvert u\rvert\dx\leq c\|Du\|(\Omega). 
\end{equation*}
\end{lemma}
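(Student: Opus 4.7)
The plan is to localize near $\partial\Omega$ using Lipschitz flattening charts and then use the $\BV$ slicing theorem to reduce both bounds to one-dimensional statements transverse to the boundary. First I would cover a neighborhood of $\partial\Omega$ by finitely many charts $U_j$ in which, via a bi-Lipschitz change of coordinates, $\Omega\cap U_j$ becomes $\{(x',x_n)\in\R^n : 0<x_n<h_j(x')\}$ for a Lipschitz function $h_j$. In these coordinates, $\dist(\cdot,\partial\Omega)$ is comparable to $x_n$ with constants depending only on $\partial\Omega$, so that $(\Omega\setminus\Omega_\epsilon)\cap U_j$ is sandwiched between slabs of width proportional to $\epsilon$. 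A partition of unity subordinate to the $U_j$ reduces both inequalities to estimates for $\BV$ functions supported in a single chart and integrated over such a slab.

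Within a single chart the slicing theorem \cite[Theorem 3.103]{AmbrosioFuscoPallara} gives that for $\mathcal{H}^{n-1}$-a.e.\ $x'$ the slice $s\mapsto u(x',s)$ is a one-dimensional $\BV$ function whose one-sided trace at $s=0$ coincides with $T_\Omega u(x',0)$, and the slicewise total variations satisfy $\int \|Du(x',\cdot)\|\,d\mathcal{H}^{n-1}(x') \leq \|Du\|(\Omega)$. The fundamental theorem of calculus for one-dimensional $\BV$ functions then yields
\begin{equation*}
 \lvert u(x',s)\rvert \leq \lvert T_\Omega u(x',0)\rvert + \|Du(x',\cdot)\|\big((0,s)\big)
\end{equation*}
for a.e.\ $s\in(0,h_j(x'))$. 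Averaging in $s\in(0,\epsilon)$, summing over charts and integrating in $x'$ produces a uniform bound of the form $C(\|Du\|(\Omega)+\|T_\Omega u\|_{L^1(\partial\Omega)})$; the trace term is then absorbed into $\|Du\|(\Omega)$ using the zero-extension device of \Cref{lemma: derivative of piecewise def function}, from which the second inequality follows with $c$ depending only on $\partial\Omega$.

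For the $\limsup$ bound, the Lebesgue point property of one-dimensional $\BV$ functions gives
\begin{equation*}
 \lim_{\epsilon\to 0}\frac{1}{\epsilon}\int_0^\epsilon \lvert u(x',s)\rvert\, ds = \lvert T_\Omega u(x',0)\rvert
\end{equation*}
for a.e.\ $x'$, while the pointwise estimate above provides an integrable majorant in $x'$; dominated convergence then yields the claimed bound $\limsup_{\epsilon\to 0}\frac{1}{\epsilon}\int_{\Omega\setminus\Omega_\epsilon}\lvert u\rvert\,dx \leq \int_{\partial\Omega}\lvert T_\Omega u\rvert\,d\mathcal{H}^{n-1}$. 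Since $u$ is independent of $t$, the $t$-integration in the statement multiplies both sides by $T$ and the first inequality follows. I expect the main obstacle to be the geometric bookkeeping in the flattening step, namely tracking that the distortion between the curvilinear strip $\Omega\setminus\Omega_\epsilon$ and the flat slabs $\{0<x_n<c\epsilon\}$ is uniformly controlled by constants depending only on the Lipschitz data of $\partial\Omega$, so that the final constant $c$ genuinely depends only on $\partial\Omega$.
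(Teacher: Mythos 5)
The paper gives no proof of this lemma; it is quoted from \cite{CDMLP1988}. Your general route — localize near $\partial\Omega$ in Lipschitz graph charts and use one-dimensional slicing together with the a.e.\ coincidence of slicewise traces with $T_\Omega u$ — is the standard way to prove both inequalities, so the overall plan is sound, but two of the steps do not hold up.

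First, you cannot ``absorb the trace term into $\|Du\|(\Omega)$'' using \Cref{lemma: derivative of piecewise def function}: that lemma gives the \emph{identity} $\|D\tilde u\|(\R^n)=\|Du\|(\Omega)+\int_{\partial\Omega}|T_\Omega u|\,d\mathcal{H}^{n-1}$ for the zero extension $\tilde u$, which does not bound the trace integral by $c\|Du\|(\Omega)$. In fact the second inequality is false as stated: for a nonzero constant $u$ the left-hand side tends to $|u|\,\mathcal H^{n-1}(\partial\Omega)>0$ while $\|Du\|(\Omega)=0$. What your slicing argument does prove, and what \cite{CDMLP1988} actually asserts, is $c\bigl(\|Du\|(\Omega)+\int_{\partial\Omega}|T_\Omega u|\,d\mathcal H^{n-1}\bigr)=c\,\|D\tilde u\|(\R^n)$; the statement in the paper has lost the trace term in transcription, and the attempted absorption should simply be deleted. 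Second, the first inequality carries no constant in front of the trace integral, and comparing $\Omega\setminus\Omega_\epsilon$ with a flat slab of uniform width $c\epsilon$, as you propose, only gives $\limsup\le C\int_{\partial\Omega}|T_\Omega u|\,d\mathcal H^{n-1}$ with $C>1$: the uniform slab-width factor and the surface Jacobian $\sqrt{1+|\nabla h_j(x')|^2}$ of the graph parametrization do not cancel in general. To get the sharp constant one must track that for a.e.\ $x'$ (where $h_j$ is differentiable) the vertical width of $\Omega\setminus\Omega_\epsilon$ above $x'$ equals $\epsilon\sqrt{1+|\nabla h_j(x')|^2}\bigl(1+o(1)\bigr)$, so that this pointwise factor cancels the Jacobian in the limit; alternatively one can run the coarea formula for the distance function. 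Your closing remark already concedes you only expect constants depending on $\partial\Omega$; that is enough for the second estimate but not for the first.
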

We will briefly recall the construction of the Anzellotti pairing and the associated Gauss-Green formula from \cite{Anzellotti:1984}. Neither will be explicitly present in our definition of a weak solution, however we use them for the discussion of the classical definition of a weak solution. Following \cite{Anzellotti:1984}, for $1\leq p<\infty$, we denote 
\begin{equation*}
    X_p(\Omega) = \{ z\in L^\infty(\Omega): \div z \in L^p(\Omega)\}.
\end{equation*}
and
\begin{equation*}
    X_\mu(\Omega) = \{z\in L^\infty(\Omega;\R^n) : \div z \text{ is a bounded measure on } \Omega \}.
\end{equation*}
For $u\in \BV(\Omega)\cap L^p(\Omega)$ and $z\in X_{p'}$, the Anzellotti pairing $(z,Du) $ is the distribution defined by 
\begin{equation*}
    (z,Du)(\varphi) = -\int_\Omega u \varphi \div z \dx - \int_\Omega u z\cdot\nabla \varphi\dx,
\end{equation*}
for $\varphi\in C_0^\infty(\Omega)$. In \cite{Anzellotti:1984} it is proved that $(z,Du)$ is a measure, with 
\begin{equation*}
    \lvert (z,Du)\rvert \leq \|z\|_\infty \|Du\|,
\end{equation*}
as measures. The absolutely continuous part is given by $(z,Du)^a = z\cdot \nabla u \mathcal L^n$. 

In \cite{Anzellotti:1984}, it is proved that there exists a linear operator $\gamma: X_\mu(\Omega)\to L^\infty(\partial\Omega)$ with
\begin{align*}
    \| \gamma(z)\|_\infty &  \leq \|z\|_\infty \\
    \gamma(z)(x)  = z(x)\cdot \nu_\Omega(x) \text{ for all } &  x\in\partial\Omega \text{ if } z\in C^1(\overline{\Omega},\R^n).
\end{align*}
The function $\gamma(z)(x)$ is interpreted as a weak trace of the normal component of $z$, and we denote $[z,\nu_\Omega](x)=\gamma(z)(x)$. With this notation the following Gauss-Green formula for the Anzellotti pairing is proven.
\begin{proposition}[{\cite [Thm. 1.9]{Anzellotti:1984}}]
For all $u\in \BV(\Omega)\cap L^p(\Omega)$ and $z\in X_{p'}(\Omega)$ we have
\begin{equation}
    \label{eq: Gauss Green in W11}
    \int_\Omega u \div z \dx + (z,Du)(\Omega) = \int_{\partial\Omega} [z, \nu_\Omega] T u \dH.
\end{equation}    
\end{proposition}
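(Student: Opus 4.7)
My strategy follows Anzellotti's original three-step approach: verify the identity for smooth $u$, use it to construct the normal trace $[z,\nu_\Omega]$ as a bounded linear functional on $L^1(\partial\Omega)$, and then extend to arbitrary $u\in\BV(\Omega)\cap L^p(\Omega)$ by strict approximation. In the smooth case, with $u\in C^1(\overline\Omega)$ and $z\in C^1(\overline\Omega,\R^n)$, the Anzellotti pairing reduces to $z\cdot\nabla u\,\mathcal L^n$ and the identity is the classical divergence theorem applied to the vector field $uz$, with $[z,\nu_\Omega]=z\cdot\nu_\Omega$.

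To construct the normal trace for general $z\in X_{p'}(\Omega)$, I define
\begin{equation*}
    \Lambda_z(T_\Omega v):=\int_\Omega v\div z\dx + \int_\Omega z\cdot\nabla v\dx
\end{equation*}
for $v\in W^{1,1}(\Omega)\cap L^p(\Omega)$. The right-hand side depends only on $T_\Omega v$: if $w\in W^{1,1}_0(\Omega)\cap L^p(\Omega)$, the corresponding expression vanishes by approximating $w$ with $C_0^\infty(\Omega)$ functions, for which integration by parts is trivial. Choosing extensions concentrated in thin collars of $\partial\Omega$ and using \Cref{cut off trace lemma}, one obtains the estimate $|\Lambda_z(\phi)|\leq\|z\|_\infty\|\phi\|_{L^1(\partial\Omega)}$. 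Since $T_\Omega:W^{1,1}(\Omega)\to L^1(\partial\Omega)$ is surjective, $\Lambda_z$ extends to a bounded linear functional on $L^1(\partial\Omega)$, represented by a unique $[z,\nu_\Omega]\in L^\infty(\partial\Omega)$ with $\|[z,\nu_\Omega]\|_\infty\leq\|z\|_\infty$.

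For the main identity, I would approximate $u\in\BV(\Omega)\cap L^p(\Omega)$ by $u_k\in C^\infty(\Omega)\cap W^{1,1}(\Omega)\cap L^p(\Omega)$ with $u_k\to u$ strictly in $\BV$ and in $L^p$. Each $u_k$ satisfies the Gauss-Green identity by the definition of $[z,\nu_\Omega]$. The term $\int_\Omega u_k\div z\dx$ passes to the limit by $L^p$--$L^{p'}$ duality, and $\int_{\partial\Omega}[z,\nu_\Omega]T_\Omega u_k\dH$ converges by continuity of the trace under strict $\BV$ convergence.

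The main obstacle is showing $(z,Du_k)(\Omega)=\int_\Omega z\cdot\nabla u_k\dx\to(z,Du)(\Omega)$, since the distributional pairing is a priori only defined by testing against $C_0^\infty(\Omega)$. I would test both pairings against cutoffs $\varphi_\delta\in C_0^\infty(\Omega)$ with $\varphi_\delta\nearrow 1$: the identity
\begin{equation*}
    (z,Du_k)(\varphi_\delta)=-\int_\Omega u_k\varphi_\delta\div z\dx-\int_\Omega u_k\, z\cdot\nabla\varphi_\delta\dx
\end{equation*}
passes to the limit in $k$ at fixed $\delta$ by $L^p$ convergence of $u_k$, producing $(z,Du)(\varphi_\delta)$ on the right. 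The subtle part is the uniform control as $\delta\to 0$: using $|(z,Du_k)|\leq\|z\|_\infty\|Du_k\|$ together with strict convergence $\|Du_k\|(\Omega)\to\|Du\|(\Omega)$, one shows the tail $(z,Du_k)(\Omega\setminus\supp\varphi_\delta)$ is uniformly small in $k$, while \Cref{cut off trace lemma} controls the boundary-layer integral $\int u\,z\cdot\nabla\varphi_\delta\dx$. The exchange of limits then identifies the limit of $(z,Du_k)(\Omega)$ with $(z,Du)(\Omega)$, completing the proof.
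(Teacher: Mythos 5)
The paper does not supply a proof of this proposition; it is quoted verbatim from Anzellotti (Theorem 1.9 of \cite{Anzellotti:1984}), so there is no in-paper argument to compare against. Your proposal faithfully reconstructs Anzellotti's own three-step construction, and the outline is sound: the thin-collar estimate via \Cref{cut off trace lemma} is exactly the right mechanism for $\|[z,\nu_\Omega]\|_{L^\infty(\partial\Omega)}\leq\|z\|_\infty$, and the interchange of limits in the last step is correctly justified, since strict convergence $\|Du_k\|(\Omega)\to\|Du\|(\Omega)$ combined with lower semicontinuity on open subsets yields $\limsup_k\|Du_k\|(\Omega\setminus\Omega_\delta)\leq\|Du\|(\Omega\setminus\Omega_\delta)$, which makes the tail $(z,Du_k)(\Omega\setminus\supp\varphi_\delta)$ eventually uniformly small. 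One detail worth tightening: the surjectivity of $T_\Omega\colon W^{1,1}(\Omega)\to L^1(\partial\Omega)$ does not by itself produce representatives in $W^{1,1}(\Omega)\cap L^p(\Omega)$ for a general trace $\phi\in L^1(\partial\Omega)$ when $p>n/(n-1)$; one should first verify the estimate $|\Lambda_z(\phi)|\leq\|z\|_\infty\|\phi\|_{L^1(\partial\Omega)}$ for $\phi\in L^\infty(\partial\Omega)$ (where a bounded $W^{1,1}$ extension concentrated near $\partial\Omega$ lies in every $L^p$) and then extend $\Lambda_z$ by density of $L^\infty(\partial\Omega)$ in $L^1(\partial\Omega)$.
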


\subsection{Function of a measure}
Throughout this article we consider a Carathéodory function $f:\Omega\times\R^n\to [0,\infty)$ satisfying the following conditions. Firstly, we suppose there exists constants $\lambda,\Lambda>0$ such that
\begin{equation}
    \label{linear growth}
    \lambda \lvert \xi\rvert \leq f(x,\xi)\leq \Lambda (1+\lvert \xi\rvert),
\end{equation}
for all $(x,\xi)\in\Omega\times\R^n$. We require that
\begin{equation}
    \label{caratheodory and convex}
    \xi \mapsto f(x,\xi) \text{ is convex for a.e. $x\in\Omega$}
\end{equation}
Moreover, we assume the limit
\begin{equation}
    \label{recession function}
    f^\infty(x,\xi)=\lim_{y\to x, t\to 0^+, z\to \xi} t f\Big(y,\frac{z}{t}\Big) \,\, \text{ exists for all } (x,\xi)\in \overline{\Omega}\times(\R^n\backslash \{0\}),
\end{equation}
and defines a jointly continuous function in $(x,\xi)$. Note that from \eqref{recession function}, it follows that $f^\infty$ is positively $1$-homogeneous, that is,  $f^\infty(x,s\xi)=sf^\infty(x,\xi)$ for all $s>0$. We will finally assume that $f^\infty$ is symmetric in the sense that
\begin{equation}
    f^\infty(x,\xi)=f^\infty(x,-\xi).
\end{equation}
We note that it follows from \eqref{caratheodory and convex} that $f^\infty$ is convex in the second argument, and from \eqref{linear growth} it follows that 
\begin{equation*}
    \lambda \lvert \xi\rvert \leq f^\infty(x,\xi)\leq \Lambda \lvert \xi\rvert,
\end{equation*}
for all $(x,\xi)\in\Omega\times\R^n$. 

Given $\mu\in \mathcal M (\Omega;\R^n)$ we decompose $\mu=\mu^a \mathcal L^n +\mu^s$ into absolutely continuous and singular part, respectively, with respect to the Lebesgue measure and define the measure $f(\mu)$ by 
\begin{equation*}
    f(\mu)(B)=\int_B f(x,\mu^a(x))\dx+\int_B f^\infty \left(x, \dfrac{d\mu^s}{d\lvert \mu\rvert^s} \right) d\lvert \mu\rvert^s,
\end{equation*}
for Borel sets $B\subset \Omega$. 

The following is a specialized form of a classical lower semicontinuity result in $\BV$, which can be found for instance in \cite[Thm. 1.1]{Rindler2012}.
\begin{lemma}
\label{lemma: reshetnyak lsc}
Let $u\in \BV(\Omega)$ and let $u_i\in \BV(\Omega)$, $i\in\mathbb N$, be a sequence such that $u_i\to u$ in $L^1(\Omega)$ as $i\to\infty$. Then 
\begin{align*}
     f(Du)(\Omega)+ \int_{\partial\Omega}f^\infty\left(\cdot, \nu_\Omega\right) \lvert Tu\rvert \dH \leq \liminf_{i\to\infty} \left( f(Du)(\Omega)+ \int_{\partial\Omega}f^\infty\left(\cdot, \nu_\Omega\right) \lvert Tu\rvert \dH\right).
\end{align*}
\end{lemma}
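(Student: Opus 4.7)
The plan is to reduce the statement to a standard $L^1$-lower semicontinuity result for convex integrands on a domain without boundary term, by extending the functions by zero across $\partial\Omega$.

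First, I would fix an open ball $B\supset\overline{\Omega}$ and set $\tilde u = u\chi_\Omega$, $\tilde u_i = u_i\chi_\Omega$, viewed as functions on $B$. Applying \Cref{lemma: derivative of piecewise def function} with the exterior function identically zero gives $\tilde u, \tilde u_i \in \BV(B)$, together with
\begin{equation*}
    D\tilde u = Du - (Tu)\,\nu_\Omega\,\mathcal{H}^{n-1}\edge\partial\Omega,
    \qquad D\tilde u_i = Du_i - (Tu_i)\,\nu_\Omega\,\mathcal{H}^{n-1}\edge\partial\Omega,
\end{equation*}
where $Du$ and $Du_i$ are regarded as measures on $B$ concentrated on $\Omega$. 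The point of passing to $B$ is that the extensions no longer carry any boundary contribution in the sense of traces, and the full information previously encoded in $Tu$ is now stored in the singular part at $\partial\Omega$.

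Next, I would compute $f(D\tilde u)(B)$ and match it to the left-hand side of the claimed inequality. The singular part of $D\tilde u$ on $\partial\Omega$ has polar $-\sign(Tu)\,\nu_\Omega$ with total variation $|Tu|\,\mathcal{H}^{n-1}\edge\partial\Omega$. Using the $1$-homogeneity and the assumed symmetry $f^\infty(x,-\xi)=f^\infty(x,\xi)$, one obtains
\begin{equation*}
    f^\infty\bigl(x,-\sign(Tu(x))\,\nu_\Omega(x)\bigr)=f^\infty(x,\nu_\Omega(x))
    \quad\text{for }\mathcal{H}^{n-1}\text{-a.e.\ }x\in\partial\Omega,
\end{equation*}
and therefore
\begin{equation*}
    f(D\tilde u)(B) = f(Du)(\Omega) + \int_{\partial\Omega} f^\infty(\cdot,\nu_\Omega)\,|Tu|\dH,
\end{equation*}
with the analogous identity for each $\tilde u_i$.

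Finally, since $u_i \to u$ in $L^1(\Omega)$ and all functions vanish outside $\Omega$, we have $\tilde u_i \to \tilde u$ in $L^1(B)$. The integrand $f$ is nonnegative, convex in $\xi$ by \eqref{caratheodory and convex}, has linear growth \eqref{linear growth}, and admits the continuous recession function $f^\infty$ by \eqref{recession function}. These are precisely the hypotheses of the classical lower semicontinuity result \cite[Thm.~1.1]{Rindler2012} for the functional $v\mapsto f(Dv)(B)$ on $\BV(B)$, whose application yields
\begin{equation*}
    f(D\tilde u)(B)\leq\liminf_{i\to\infty} f(D\tilde u_i)(B),
\end{equation*}
which is the desired inequality once the decompositions above are substituted. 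The main obstacle I anticipate is a purely bookkeeping one: correctly identifying $f^\infty$ on the jump at $\partial\Omega$ relies on the symmetry of $f^\infty$ in its second argument, as otherwise the sign of $Tu$ (respectively $Tu_i$) would not disappear and one would be left with a signed trace term rather than $|Tu|\,f^\infty(\cdot,\nu_\Omega)$.
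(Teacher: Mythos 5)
Your proof takes a genuinely different route from the paper: the paper simply cites \cite[Thm.~1.1]{Rindler2012} as directly giving the boundary-penalized lower semicontinuity, while you derive it from an interior lower semicontinuity theorem on an enlarged ball $B$, converting the trace penalty into a genuine singular (jump) part of the extended gradient via \Cref{lemma: derivative of piecewise def function}. This is a sound and standard reduction, and the computation of the polar of the jump part together with the use of the symmetry $f^\infty(x,-\xi)=f^\infty(x,\xi)$ is exactly right.

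Two small points should be tightened. First, the identity
\begin{equation*}
    f(D\tilde u)(B) = f(Du)(\Omega) + \int_{\partial\Omega} f^\infty(\cdot,\nu_\Omega)\,|Tu|\dH
\end{equation*}
is not quite correct as written: since $\tilde u\equiv 0$ on $B\setminus\overline\Omega$, the absolutely continuous part also contributes $\int_{B\setminus\overline\Omega} f(x,0)\dx$, and $f(x,0)$ need not vanish (the growth bound only gives $0\leq f(x,0)\leq\Lambda$). This extra term is the same for $\tilde u$ and each $\tilde u_i$, so it cancels on both sides of the $\liminf$ inequality and the conclusion still follows, but the identity as stated is false and should be corrected. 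Second, the interior lower semicontinuity theorem is applied on $B$, whereas $f$ is only assumed to be a Carath\'eodory integrand on $\Omega\times\R^n$ with recession function defined on $\overline\Omega$; one should briefly note how $f$ is extended to $B\times\R^n$ so as to retain the hypotheses (e.g.\ set $f(x,\xi)=|\xi|$ or $f(x,\xi)=\Lambda(1+|\xi|)$ off $\overline\Omega$, which suffices because $D\tilde u$ and $D\tilde u_i$ place no mass on $B\setminus\overline\Omega$ apart from the harmless $\xi=0$ Lebesgue contribution). With these two remarks added, the argument is complete.
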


We will also need a Reshetnyak continuity theorem. This version is a specialized form of \cite[Theorem 3.10]{Beck-Schmidt:2015}, see also \cite{KristensenRindler2010Young,KristensenRindler2010Relaxation}.
\begin{lemma}
\label{reshetnyak continuity}
Let $\Omega\subset\R^n$ be a bounded open set such that $\partial\Omega$ has zero $\mathcal L^n$ measure. Assume, $(\mu_k)_{k\in\mathbb N}$ weak$^*$-converges to $\mu$ in the space of finite $\R^n$ valued Radon measures on $\overline{\Omega}$. If
\begin{equation*}
    \lim_{k\to\infty} \lvert (\mathcal L^n,\mu_k)\rvert(\overline{\Omega})=\lvert (\mathcal L^n,\mu)\rvert(\overline{\Omega}),
\end{equation*}
then 
\begin{align*}
    \lim_{k\to\infty} & \left( \int_\Omega f\left(\cdot, \dfrac{d\mu_k^a}{d\mathcal L^n}\right)\dx + \int_{\overline\Omega} f^\infty\left(\cdot, \dfrac{d\mu_k^s}{d\lvert \mu_k^s\rvert}\right)\,d\lvert \mu_k^s\rvert \right) \\
    & = \int_\Omega f\left(\cdot, \dfrac{d\mu^a}{d\mathcal L^n}\right)\dx + \int_{\overline\Omega} f^\infty\left(\cdot, \dfrac{d\mu^s}{d\lvert \mu^s\rvert}\right)\,d\lvert \mu^s\rvert.
\end{align*}
\end{lemma}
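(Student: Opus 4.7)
The plan is to reduce the statement directly to the general Reshetnyak-type continuity theorem by recognizing the assumption as nothing but \emph{area-strict convergence} of $\mu_k$ to $\mu$ on $\overline{\Omega}$. Strict convergence alone (that is, $\lvert \mu_k\rvert(\overline{\Omega})\to\lvert\mu\rvert(\overline{\Omega})$) is not sufficient for continuity of Lipschitz integrands like $f$, because $f$ fails to be continuous at infinity in the projective sense; the stronger area-strict convergence is precisely the condition that compensates for this. Once this identification is made, one invokes the general theorem (e.g.\ \cite[Thm. 3.10]{Beck-Schmidt:2015}).

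First I would carry out the decomposition of the auxiliary vector measure $(\mathcal L^n,\mu_k)$ into its absolutely continuous and singular parts with respect to the Lebesgue measure. Writing $\mu_k^a = d\mu_k^a/d\mathcal L^n$ and noting that $\mathcal L^n$ has no singular part, one obtains
\[
\lvert(\mathcal L^n,\mu_k)\rvert(B)
= \int_B \sqrt{1+\lvert \mu_k^a\rvert^2}\dx + \lvert\mu_k^s\rvert(B),
\]
for every Borel set $B\subset\overline{\Omega}$, and the analogous formula for $\mu$. The hypothesis therefore states that the total masses of these auxiliary measures converge, which, combined with weak$^*$ convergence of $\mu_k$ to $\mu$, is exactly the definition of area-strict convergence on $\overline{\Omega}$.

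Second I would verify that $f$ satisfies the hypotheses of the quoted theorem. The linear growth bound \eqref{linear growth}, the joint Carathéodory and convexity property \eqref{caratheodory and convex}, and in particular the hypothesis \eqref{recession function} that the recession function $f^\infty$ exists as a jointly continuous, positively $1$-homogeneous function on $\overline{\Omega}\times(\R^n\setminus\{0\})$, are precisely the structural requirements of the general continuity theorem. With area-strict convergence in hand, applying the theorem to the integrand $f$ gives the claim.

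The step I expect to need the most care is the bookkeeping at the boundary: the general theorem treats the integral over $\overline{\Omega}$ as a single object while the statement here splits it into a Lebesgue integral on $\Omega$ and a $\lvert\mu_k^s\rvert$-integral on $\overline{\Omega}$. Since $\partial\Omega$ has zero $\mathcal L^n$ measure by assumption, the absolutely continuous parts of $\mu_k$ and $\mu$ give no mass to $\partial\Omega$, so the two formulations coincide and no further argument is needed.
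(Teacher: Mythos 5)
Your proposal matches the paper's treatment: the paper does not prove this lemma from scratch but presents it as a specialization of Beck--Schmidt \cite[Thm.~3.10]{Beck-Schmidt:2015}, and your reduction (identifying the hypothesis as area-strict convergence on $\overline\Omega$, checking that $f$ satisfies the structural assumptions, and observing that $\partial\Omega$ being $\mathcal L^n$-null reconciles the split into $\Omega$ and $\overline\Omega$ integrals) is exactly the bookkeeping that justifies this citation.
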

Using the previously established notation, we have that
\begin{equation*}
    \lvert(\mathcal L^n,\mu)\rvert=\mathcal A(\mu),
\end{equation*}
as measures, where the area functional $\mathcal A$ is given by $\mathcal A(\xi)=\sqrt{1+\xi^2}$. A sequence $u_i\in \BV$ is said to converge area-strictly to $u \in \BV(\Omega)$ if $u_i\to u$ in $L^1(\Omega)$ as $i\to \infty$ and
\begin{equation*}
    \lim_{i\to\infty} \mathcal A(Du_i)(\Omega) = \mathcal A (Du)(\Omega).
\end{equation*}
The following is a corollary of the Reshetnyak continuity theorem for sequences in $\BV$.
\begin{corollary}
    \label{lemma:area strict convergences}
    Let $u\in \BV(\Omega)$ and let $u_i\in \BV(\Omega)$, $i\in\mathbb N$, be a sequence with $u_i\to u$ area-strictly as $i\to \infty$. Then for any $\varphi\in W^{1,1}(\Omega)$ we have $u_i+\varphi\to u+\varphi$ area-strictly as $i\to\infty$, and if $\varphi\in L^\infty(\Omega_T)$ then $\varphi u_i\to \varphi u$ area-strictly as $i\to\infty$.
\end{corollary}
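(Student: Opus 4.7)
The plan is to deduce both claims from the Reshetnyak continuity theorem (Lemma \ref{reshetnyak continuity}) applied to well-chosen integrands of linear growth. The area-strict hypothesis $u_i\to u$ in particular gives a uniform $\BV$ bound, so $Du_i \wstar Du$ as finite $\R^n$-valued Radon measures on $\overline{\Omega}$, while the identification $(\mathcal L^n,\mu) = \mathcal A(\mu)$ noted above the corollary shows that $\mathcal A(Du_i)(\Omega) \to \mathcal A(Du)(\Omega)$ is exactly the mass-of-area condition required by Lemma \ref{reshetnyak continuity} (using that $Du$ and $Du_i$ give no mass to $\partial\Omega$). Thus Lemma \ref{reshetnyak continuity} will be applicable with any admissible integrand, and both parts reduce to choosing such an integrand that reproduces $\mathcal A(D(u_i+\varphi))(\Omega)$ or $\mathcal A(D(\varphi u_i))(\Omega)$.

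For the first claim I would use the integrand $F(x,\xi) = \sqrt{1 + |\xi + \nabla\varphi(x)|^2}$, which is Carathéodory, convex in $\xi$, satisfies the linear growth bound, and has the jointly continuous recession function $F^\infty(x,\xi) = |\xi|$. A Sobolev perturbation leaves the singular part unchanged, so $D(u_i+\varphi) = (\nabla u_i + \nabla\varphi)\mathcal L^n + D^s u_i$ and hence
\[
\mathcal A\bigl(D(u_i+\varphi)\bigr)(\Omega) = \int_\Omega F(x,\nabla u_i)\dx + \int_\Omega F^\infty\Bigl(x,\tfrac{dD^s u_i}{d\|D^s u_i\|}\Bigr)\,d\|D^s u_i\|.
\]
Lemma \ref{reshetnyak continuity} then passes the right-hand side to $\mathcal A(D(u+\varphi))(\Omega)$, and combined with $u_i + \varphi \to u + \varphi$ in $L^1(\Omega)$ this is the claimed area-strict convergence.

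For the second claim I interpret $\varphi$ as carrying enough regularity, e.g.\ $\varphi\in W^{1,\infty}(\Omega)$, for the $\BV$ product rule $D(\varphi u_i) = \varphi\, Du_i + u_i\nabla\varphi\,\mathcal L^n$ to hold. Then
\[
\mathcal A(D(\varphi u_i))(\Omega) = \int_\Omega \sqrt{1 + |\varphi\nabla u_i + u_i\nabla\varphi|^2}\dx + \int_\Omega |\varphi|\,d\|D^s u_i\|.
\]
The presence of $u_i$ (rather than $u$) inside the square root is the main obstacle, since it prevents a direct use of Lemma \ref{reshetnyak continuity}. I would handle it with the elementary bound $|\sqrt{1+|a+b|^2} - \sqrt{1+|a|^2}| \le |b|$, applied with $a = \varphi\nabla u_i + u\nabla\varphi$ and $b = (u_i-u)\nabla\varphi$: this replaces $u_i$ by $u$ in the lower-order term at a cost bounded by $\|\nabla\varphi\|_\infty \|u_i-u\|_{L^1(\Omega)} \to 0$. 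After this replacement I would apply Lemma \ref{reshetnyak continuity} to the integrand $G(x,\xi) = \sqrt{1 + |\varphi(x)\xi + u(x)\nabla\varphi(x)|^2}$, which is convex in $\xi$, of linear growth, and has the continuous recession function $G^\infty(x,\xi)=|\varphi(x)\xi|$; this yields convergence of the two principal terms to $\mathcal A(D(\varphi u))(\Omega)$. Finally, $\varphi u_i \to \varphi u$ in $L^1(\Omega)$ is immediate from the $L^\infty$ bound on $\varphi$ and $u_i\to u$ in $L^1$, completing the plan.
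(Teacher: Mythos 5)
Your overall plan is aimed at the right target, and the observation that the Sobolev perturbation leaves the singular part unchanged is exactly the structural fact one wants. But there is a real gap in both parts: the integrands you propose to feed into Lemma~\ref{reshetnyak continuity} do not satisfy the structural hypotheses under which that lemma is stated in this paper. For the first claim you use $F(x,\xi)=\sqrt{1+|\xi+\nabla\varphi(x)|^2}$ with only $\varphi\in W^{1,1}(\Omega)$. Since $\nabla\varphi$ is merely in $L^1$, the uniform linear growth bound \eqref{linear growth} fails (the best available bound is $F(x,\xi)\le 1+|\xi|+|\nabla\varphi(x)|$ with an $x$-dependent, unbounded constant), and the recession function in the sense of \eqref{recession function} is not defined, since the joint limit $y\to x$, $t\to 0^+$, $z\to\xi$ of $tF(y,z/t)$ does not exist and in particular is not jointly continuous wherever $\nabla\varphi$ fails to be continuous. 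The same problem recurs in the second claim: your $G(x,\xi)=\sqrt{1+|\varphi(x)\xi+u(x)\nabla\varphi(x)|^2}$ has the $L^1$ coefficient $u\nabla\varphi$, so again neither \eqref{linear growth} nor \eqref{recession function} can hold. (A smaller point: you also assert $Du_i\wstar Du$ on $\overline\Omega$; one must still rule out concentration of mass on $\partial\Omega$, which follows from area-strict convergence and lower semicontinuity but requires a line of argument.)

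The paper circumvents all of this with two extra devices that you skipped. First, it reduces to a one-sided $\limsup$ estimate using lower semicontinuity, then approximates $\varphi$ by $\varphi_j\in C^\infty(\Omega)$ in $W^{1,1}$ at a controlled cost $\|D(\varphi_j-\varphi)\|$ via the elementary bound $\mathcal A(\xi_1+\xi_2)\le\mathcal A(\xi_1)+|\xi_2|$. Second, it multiplies by a cutoff $\eta\in C_0^\infty(\Omega)$, $0\le\eta\le1$, so that the resulting integrand $\eta(x)\sqrt{1+|\xi+\nabla\varphi_j(x)|^2}$ is genuinely admissible for Lemma~\ref{reshetnyak continuity} (on $\supp\eta$ the gradient $\nabla\varphi_j$ is bounded, so linear growth and the jointly continuous recession $\eta(x)|\xi|$ hold, and the cutoff also dispenses with any concern about mass on $\partial\Omega$). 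Reshetnyak is applied with this cut-off smooth integrand; one then sends $\eta\uparrow 1$ and $j\to\infty$. To make your argument work you would need to insert these two approximation layers; as written, the direct appeal to Lemma~\ref{reshetnyak continuity} with a merely $L^1$-coefficient integrand is not justified.
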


\begin{proof}
Let us show that $u_i+\varphi\to u+\varphi$ area-strictly as $i\to\infty$, the proof for the other statement being similar. The convergence in $L^1(\Omega)$ is clear. By lower semicontinuity of the area functional, it is enough to show that 
\begin{equation*}
    \limsup_{i\to\infty} \mathcal A (D(u_i+\varphi))(\Omega)\leq \mathcal A (D(u+\varphi))(\Omega).
\end{equation*}
Let $\varphi_j\in C^\infty(\Omega)$ such that $\varphi_j\to \varphi$ in $W^{1,1}(\Omega)$ as $j\to\infty$. Using the estimate 
\begin{equation*}
    \mathcal A(\xi_1+\xi_2)\leq \mathcal A(\xi_{1})+\lvert\xi_2\rvert
\end{equation*} for all $\xi_1,\xi_2\in \R^n$, we find that
\begin{equation*}
    \mathcal A(D(u_i+\varphi))\leq \mathcal A(D(u_i+\varphi_j))+\|D(\varphi_j-\varphi)\|,
\end{equation*}
as measures on $\Omega$. Let furthermore $\eta\in C_0^\infty(\Omega)$ with $0\leq \eta\leq 1$ and estimate
\begin{equation}
    \label{eq: est for area strict conv added sobolev}
    \mathcal A(D(u_i+\varphi))\leq \eta \mathcal A(D(u_i+\varphi_j))+(1-\eta)(\|Du_i\|+\|D\varphi_h\|)+\|D(\varphi_j-\varphi)\|.
\end{equation}
 Consider the functional
\begin{equation*}
    f(x,\xi)=\eta(x) \sqrt{1+\lvert \xi+\nabla \varphi_j(x) \rvert^2}.
\end{equation*}
Then $f$ satisfies the conditions of \Cref{reshetnyak continuity} with $f^\infty(x,\xi) = \eta(x) \lvert\xi\rvert$. This shows that
\begin{equation*}
    \lim_{i\to \infty} \int_\Omega\eta \,d\mathcal A(D(u_i+\varphi_j)) = \lim_{i\to \infty} f(D(u_i))(\Omega)=f(Du)(\Omega)=\int_\Omega \eta \,d\mathcal A(D(u+\varphi_j)).
\end{equation*}
Applying the above in \eqref{eq: est for area strict conv added sobolev} we find that
\begin{align*}
    & \limsup_{i\to\infty} \mathcal A (D(u_i+\varphi))(\Omega) \\
    & \leq \int_\Omega\eta\,d \mathcal A(D(u+\varphi_j))+\int_\Omega(1-\eta)\,d(\|Du\|+\|D \varphi_h\|)+\|D(\varphi_j-\varphi)\|.
\end{align*}
Since $\eta$ was arbitrary we conclude 
\begin{align*}
    & \limsup_{i\to\infty} \mathcal A (D(u_i+\varphi))(\Omega)\leq \mathcal A(D(u+\varphi_j))(\Omega)+\|D(\varphi_j-\varphi)\|,
\end{align*}
and letting $j\to\infty$ we are done.
\end{proof}

For the purpose of studying weak solutions, we recall the following lemma on the differentiability of $f(\mu)$ from \cite{Anzellotti:1985}.
\begin{lemma}
\label{lemma: anzellotti derivative functional of measure}
Assume $f(x,\xi)$ is differentiable in $\xi$ for all $(x,\xi)\in \Omega\times\R^n$ and $f^\infty(x,\xi)$ is differentiable in $\xi$ for all $(x,\xi)\in \Omega\times(\R^n\backslash\{0\})$ with $\lvert D_\xi f\rvert\leq M$ and $\lvert D_\xi f\rvert \leq M$. Then $f(\mu)$ is differentiable at $\mu$ in the direction $\beta$ if and only if $\lvert \beta\rvert^s\ll\lvert \mu\rvert^s$, and one has 
\begin{equation*}
    \frac{d}{dh} f(\mu+h\beta)\Big\rvert_{h=0} = \int_\Omega D_\xi f(x,\mu^a)\cdot \beta^\alpha \dx + \int_\Omega D_\xi f^\infty\left(x,\frac{d\mu}{d\lvert \mu\rvert}\right)\cdot \frac{d\beta}{d\lvert\beta\rvert}\,d\lvert \beta\rvert^s.
\end{equation*}
\end{lemma}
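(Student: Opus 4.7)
My plan is to decompose $f(\mu+h\beta)$ into its absolutely continuous and singular contributions, differentiate them separately, and trace the restriction $\lvert\beta\rvert^s\ll\lvert\mu\rvert^s$ back to the failure of differentiability of $f^\infty(x,\cdot)$ at the origin. Writing $\mu+h\beta=(\mu^a+h\beta^a)\mathcal L^n+(\mu^s+h\beta^s)$ gives
$$f(\mu+h\beta)(\Omega)=\int_\Omega f(x,\mu^a+h\beta^a)\dx+\int_\Omega f^\infty\!\left(x,\frac{d(\mu^s+h\beta^s)}{d\lvert\mu^s+h\beta^s\rvert}\right) d\lvert\mu^s+h\beta^s\rvert.$$
For the absolutely continuous term, the bound $\lvert D_\xi f\rvert\le M$ supplies the dominant $M\lvert\beta^a\rvert$ needed to differentiate under the integral and obtain $\int_\Omega D_\xi f(x,\mu^a)\cdot\beta^a\dx$.

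To handle the singular term uniformly in $h$, I would introduce the common reference measure $\rho=\lvert\mu\rvert^s+\lvert\beta\rvert^s$ and write $\mu^s=u\rho$, $\beta^s=v\rho$ for vector-valued Radon-Nikodym densities $u,v\in L^1(\rho;\R^n)$. The positive $1$-homogeneity of $f^\infty(x,\cdot)$ then collapses the singular integral into
$$\int_\Omega f^\infty\!\left(x,\frac{u+hv}{\lvert u+hv\rvert}\right)\lvert u+hv\rvert\,d\rho=\int_\Omega f^\infty(x,u+hv)\,d\rho,$$
a concrete $h$-dependent integral against a single fixed measure. On the $\rho$-measurable set $\{u\neq 0\}$, the integrand is differentiable at $h=0$ with derivative $D_\xi f^\infty(x,u)\cdot v$ and difference quotient uniformly bounded by $M\lvert v\rvert$. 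On $\{u=0\}$, however, $1$-homogeneity combined with the symmetry $f^\infty(x,\xi)=f^\infty(x,-\xi)$ yields $f^\infty(x,hv)=\lvert h\rvert f^\infty(x,v)$, so the one-sided difference quotients at $h=0$ equal $\pm f^\infty(x,v)$ and agree only when $v=0$ at that point.

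Combining the two analyses, differentiability of $f(\mu+h\beta)$ at $h=0$ is equivalent to $v=0$ $\rho$-a.e.\ on $\{u=0\}$, and an elementary Lebesgue-decomposition argument shows this is precisely $\lvert\beta\rvert^s\ll\lvert\mu\rvert^s$. Under this condition, dominated convergence produces $\int_{\{u\neq 0\}}D_\xi f^\infty(x,u)\cdot v\,d\rho$ for the singular derivative. Since $D_\xi f^\infty(x,\cdot)$ is $0$-homogeneous, I may substitute $u/\lvert u\rvert=d\mu/d\lvert\mu\rvert$, while $v\,d\rho=d\beta^s=(d\beta/d\lvert\beta\rvert)\,d\lvert\beta\rvert^s$, which assembles into the claimed formula.

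The main obstacle is the necessity direction: when $\lvert\beta\rvert^s\not\ll\lvert\mu\rvert^s$ I need to exhibit a $\rho$-measurable set on which $u=0$ and $v\neq 0$ carries positive mass, and then invoke the lower bound $f^\infty(x,\xi)\ge\lambda\lvert\xi\rvert$ to guarantee $f^\infty(x,v)>0$ there, yielding a genuine gap between the left and right directional derivatives of $h\mapsto f(\mu+h\beta)(\Omega)$ at $h=0$.
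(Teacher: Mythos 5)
The paper states this lemma as a direct import from Anzellotti \cite{Anzellotti:1985} and gives no proof, so there is no in-paper argument to compare against; your proof must be judged on its own. It is correct, and it follows what is essentially the canonical route: split $f(\mu+h\beta)$ into absolutely continuous and singular contributions, introduce the dominating measure $\rho=\lvert\mu\rvert^s+\lvert\beta\rvert^s$ with vector densities $u,v$, and exploit positive $1$-homogeneity to collapse the singular integral to $\int_\Omega f^\infty(x,u+hv)\,d\rho$. The pointwise analysis is right: on $\{u\neq 0\}$ the difference quotients are two-sidedly convergent and dominated by $M\lvert v\rvert$ (which uses that $\lvert D_\xi f^\infty\rvert\le M$ together with convexity makes $f^\infty(x,\cdot)$ globally $M$-Lipschitz — worth stating), and on $\{u=0\}$ homogeneity plus symmetry give one-sided quotients $\pm f^\infty(x,v)$. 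The translation "$v=0$ $\rho$-a.e.\ on $\{u=0\}$" $\Longleftrightarrow$ "$\lvert\beta\rvert^s\ll\lvert\mu\rvert^s$" is also correct. The one step you should make explicit in the necessity direction is the passage from pointwise one-sided derivatives to one-sided derivatives of the integral: since $h\mapsto f^\infty(x,u(x)+hv(x))$ is convex, its difference quotients are monotone in $h$ and bounded by $M\lvert v\rvert$, so monotone (or dominated) convergence shows the right derivative of the singular term equals $\int_{\{u\neq 0\}}D_\xi f^\infty(x,u)\cdot v\,d\rho+\int_{\{u=0\}}f^\infty(x,v)\,d\rho$, with the left derivative differing by $-2\int_{\{u=0\}}f^\infty(x,v)\,d\rho$; the lower bound $f^\infty(x,\xi)\ge\lambda\lvert\xi\rvert$ then produces the strict gap exactly as you indicate. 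With the $0$-homogeneity substitution $D_\xi f^\infty(x,u)=D_\xi f^\infty\bigl(x,d\mu/d\lvert\mu\rvert\bigr)$ and the rewriting $v\,d\rho=(d\beta/d\lvert\beta\rvert)\,d\lvert\beta\rvert^s$ on the support of $\lvert\beta\rvert^s$, your assembly of the final formula is correct.
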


\subsection{Parabolic function spaces}
Let $X$ be a Banach space and $1\leq p\leq \infty$. A map $v:[0,T]\to X $ is called strongly measurable if there exists a sequence of simple functions $v_k:[0,T]\to X$ such that $\|v(t)-v_k(t)\|_X\to 0$ as $k\to \infty$ for a.e. $t\in [0,T]$. The space $L^p(0,T;X)$ consists of equivalence classes of strongly measurable functions $v$ such that $t\mapsto \|v(t)\|_X\in L^p(0,T)$.

Let $\Omega\subset\R^n$ be an open set. Since $\BV(\Omega)$ is not separable, strong measurability turns out to be too restrictive for considering parabolic $\BV$ functions. However, $\BV(\Omega)$ can be identified as the dual of a separable space, whose elements are of the form $g-\div G$, where $g\in C_0^0(\Omega)$ and $G\in C_0^0(\Omega,\R^n)$. For $X=X_0'$, where $X_0$ is a separable space, a map $v:[0,T]\mapsto X$ is said to be weakly*-measurable if for every $\varphi\in X_0$ the function $t\mapsto \langle v(t),\varphi\rangle$ is measurable. This ensures that the map $t\mapsto \|v(t)\|_X$ is measurable, since 
\begin{equation*}
    \| v(t)\|_X= \sup \{ \langle v(t),\varphi\rangle ; \varphi\in X_0, \|\varphi\|_{X_0}\leq 1\},
\end{equation*}
and the space $X_0$ is separable. The weak*-Lebesgue space $L^p_{w*}(0,T;X)$ is then defined as the space of equivalence classes weak*-measurable functions $v$ with $t\mapsto \|v(t)\|_X\in L^p(0,T)$.

We recall the following fundamental lemma related to lower semicontinuity in parabolic BV spaces.
\begin{lemma}[{\cite [Lemma 2.3]{BoegelDuzSchevObstacle:2016}}]
    \label{low semcont of par BV}
    Let $u_i\in L^1_{w^*}(0,T;\BV(\Omega))$, $i\in\mathbb N$, be a sequence with $u_i\rightharpoondown u$ weakly in $L^1(\Omega_T)$ as $i\to \infty$ for some $u\in L^1(\Omega_T)$, and 
    \begin{equation*}
        \liminf_{i\to \infty}\int_0^T \|Du_i(t)\|(\Omega)\dt<\infty.
    \end{equation*}
    Then $u\in L^1_{w^*}(0,T;\BV(\Omega))$ and 
    \begin{equation*}
        \int_0^T \|Du(t)\|(\Omega)\dt \leq \liminf_{i\to \infty}\int_0^T\|Du_i(t)\|(\Omega)\dt.
    \end{equation*}
\end{lemma}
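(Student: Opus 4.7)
My plan is to establish the lemma via a duality characterization of $I(v) := \int_0^T\|Dv(t)\|(\Omega)\dt$ as a supremum of linear functionals that are continuous on $L^1(\Omega_T)$ with its weak topology, whence lower semicontinuity becomes automatic. Concretely, for $v \in L^1(\Omega_T)$ I set
\[
   J(v) := \sup\Big\{\iint_{\Omega_T} v\,\div_x\Phi \dx \dt : \Phi \in C_c^1(\Omega_T,\R^n),\ \|\Phi\|_\infty \leq 1\Big\}.
\]
The central claim will be the duality identity $J(v) = I(v)$ for every $v \in L^1_{w^*}(0,T;\BV(\Omega))$, with the usual convention $I(v) = +\infty$ if $v$ is not in that space.

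Granting the duality, the lemma follows quickly. For any admissible $\Phi$ the function $\div_x\Phi$ is bounded, so the weak $L^1$ convergence of $u_i \to u$ yields $\iint_{\Omega_T} u\,\div_x\Phi \dx\dt = \lim_i \iint_{\Omega_T} u_i\,\div_x\Phi \dx\dt \leq \liminf_i J(u_i) = \liminf_i I(u_i)$. Passing to the supremum over $\Phi$ gives $J(u) \leq \liminf_i I(u_i) < \infty$, and applying the duality to $u$ itself will then yield $u \in L^1_{w^*}(0,T;\BV(\Omega))$ together with $I(u) = J(u)$, which is the desired inequality.

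The core of the argument, and the main obstacle, is the duality $J = I$. The inequality $J \leq I$ follows immediately from the slicewise bound $\int_\Omega v(\cdot,t)\,\div_x\Phi(\cdot,t)\dx \leq \|Dv(t)\|(\Omega)\|\Phi(\cdot,t)\|_\infty$ and integration in $t$. For the reverse, I would fix a countable family $\{\Psi_k\}_{k\in\mathbb N}$ that is uniformly dense in $\{\Psi \in C_0^1(\Omega,\R^n) : \|\Psi\|_\infty \leq 1\}$. Then for almost every $t$ with $v(\cdot,t) \in L^1(\Omega)$ one has
\[
    \|Dv(t)\|(\Omega) = \sup_k \int_\Omega v(\cdot,t)\,\div\Psi_k \dx,
\]
so the slicewise total variation is measurable in $t$. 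For each $K$, partition $(0,T)$ into disjoint measurable sets $A_1^K,\dots,A_K^K$ on which the maximum $V_K(t) := \max_{k \leq K}\int_\Omega v(\cdot,t)\,\div\Psi_k \dx$ is attained by the corresponding $\Psi_k$, and set $\widetilde\Phi_K(x,t) := \sum_{k=1}^K \chi_{A_k^K}(t)\Psi_k(x)$; this field is $C^1$ in $x$, measurable in $t$, bounded by $1$, and satisfies $\iint_{\Omega_T} v\,\div_x\widetilde\Phi_K \dx\dt = \int_0^T V_K(t)\dt$. Regularizing $\widetilde\Phi_K$ by convolution in the time variable (with cutoffs near $0$ and $T$) produces admissible test fields $\Phi_K^\epsilon \in C_c^1(\Omega_T,\R^n)$ with $\|\Phi_K^\epsilon\|_\infty \leq 1$ and $\iint_{\Omega_T} v\,\div_x\Phi_K^\epsilon \dx\dt \to \int_0^T V_K(t)\dt$ as $\epsilon \to 0$. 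Hence $\int_0^T V_K\dt \leq J(v)$, and monotone convergence in $K$ delivers $I(v) \leq J(v)$.

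Finally, applying the same partition-and-mollification construction to $u$ shows $\int_0^T V_K(t)\dt \leq J(u) < \infty$ uniformly in $K$, so $t \mapsto \|Du(t)\|(\Omega)$ lies in $L^1(0,T)$; in particular $u(\cdot,t) \in \BV(\Omega)$ for a.e. $t$. The weak-$\ast$ measurability required for membership in $L^1_{w^*}(0,T;\BV(\Omega))$ reduces, via the separable predual of $\BV(\Omega)$ recalled in Section~\ref{sect: prelim}, to measurability of $t \mapsto \int_\Omega u(\cdot,t)g \dx$ and $t \mapsto \int_\Omega u(\cdot,t)\div G \dx$ for $g \in C_0^0(\Omega)$ and $G \in C_0^1(\Omega,\R^n)$, which is immediate from Fubini.
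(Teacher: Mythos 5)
Your proposal is correct, and it follows a genuinely different route from the one used in the cited reference and from the closely analogous \Cref{lemma: lsc of functional on parabolic bv for weak conv} proved later in the paper. The standard approach converts weak $L^1(\Omega_T)$ convergence into strong $L^1$ convergence of convex combinations via Mazur's lemma, then uses Fatou together with the slicewise lower semicontinuity of $v\mapsto\|Dv\|(\Omega)$ (or a Reshetnyak-type result); the membership $u\in L^1_{w^*}(0,T;\BV(\Omega))$ then requires a separate measurability argument. Your duality identity $I=J$, where $J$ is a supremum of linear functionals $v\mapsto\iint_{\Omega_T}v\,\div_x\Phi$ that are continuous on $L^1(\Omega_T)$ with its weak topology, makes the lower semicontinuity automatic, and the same construction (the measurable selection $V_K$ plus monotone convergence) delivers both the finiteness and the measurability of $t\mapsto\|Du(t)\|(\Omega)$ in one stroke. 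That is a clean advantage: the Mazur route gives the inequality quickly but leaves the weak\*-measurability of $u$ as a separate task, whereas you get it essentially for free.

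Two points of phrasing should be tightened, both in the same direction: your argument lives in the regime where $u(\cdot,t)$ is not yet known to lie in $\BV(\Omega)$, so you may only use the $L^1(\Omega)$ duality. First, ``uniformly dense'' in $\{\Psi\in C^1_0(\Omega,\R^n):\|\Psi\|_\infty\le 1\}$ should be density in the (local) $C^1$ topology: the functional $\Psi\mapsto\int_\Omega u(\cdot,t)\,\div\Psi\dx$ is continuous under $C^1$ convergence of $\Psi$ but not under $C^0$ convergence when $u(\cdot,t)$ is merely $L^1$; with $C^1$-density the identity $\sup_k\int_\Omega u(t)\div\Psi_k\dx=\|Du(t)\|(\Omega)$ and the measurability of $V_K$ are both justified. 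Second, in the final reduction of weak\*-measurability you test against $G\in C^1_0(\Omega,\R^n)$, whereas the predual described in \Cref{sect: prelim} involves $G\in C^0_0(\Omega,\R^n)$; the passage from $C^1_0$ to $C^0_0$ is by $C^0$-approximation and dominated convergence against the finite measure $Du(t)$ (once one knows $u(t)\in\BV(\Omega)$ for a.e.\ $t$), and this deserves a sentence. Neither point is an obstruction, but since your whole strategy deliberately avoids assuming $u\in L^1_{w^*}(0,T;\BV(\Omega))$ up front, it is worth being precise about exactly which density and which limit arguments are permitted at each stage.
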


\subsection{Definition of variational and weak solutions}
Throughout this article, the boundary values are given by the trace of a function $g\in L^1_{w^*}(0,T;\BV(\Omega))$ and the initial values satisfy $u_o\in L^2(\Omega)$.  We begin with the variational approach. The form of the below definition is slightly different from the definition of variational solutions in \cite{BDM2015,BoegelDuzSchevTime:2016,BDSS:2019,BoegelDuzSchevObstacle:2016}, however the equivalence is guaranteed by \Cref{lemma: derivative of piecewise def function}.
\begin{definition}
A function $u\in L^1_{w^*}(0,T;\BV(\Omega))\cap L^\infty(0,T;L^2(\Omega))$ is a variational solution to \eqref{PDE} if
\begin{equation}
\label{var inequality}
\begin{split}
    & \int_0^\tau f(Du)(\Omega)\dt + \iint_{\partial\Omega\times(0,\tau)}\lvert Tg-Tu\rvert f^\infty(\cdot, \nu_\Omega)\dH\dt\\
    & \leq  \int_0^\tau  f(Dv)(\Omega)\dt + \iint_{\partial\Omega\times(0,\tau)}\lvert Tg-Tv\rvert f^\infty(\cdot, \nu_\Omega)\dH\dt \\ 
    & \quad +\iint_{\Omega_\tau} \partial_t v (v-u)\dx\dt +\frac{1}{2}\|v(0)-u_0\|^2_{L^2(\Omega)} - \frac{1}{2}\|(v-u)(\tau)\|^2_{L^2(\Omega)} ,
\end{split}
\end{equation}
holds for a.e. $\tau\in (0,T)$, and for every $v\in L^1_{w^*}(0,T;\BV(\Omega))\cap C([0,T],L^2(\Omega))$ with $\partial_t v\in L^2(\Omega_T)$.
\end{definition}
The following lemma shows that variational solutions attain the initial values.
\begin{lemma}
\label{lemma: var sol attains init val}
    \label{var sol initial value}
    Let $u$ be a variational solution of \eqref{PDE}. Then
    \begin{equation}
        \lim_{h\to 0}\frac{1}{h}\int_0^h\|u(t)-u_0\|_{L^2(\Omega)}^2 \dt = 0.
    \end{equation}
\end{lemma}
\begin{proof}
    Test \eqref{var inequality} with $v=u_{0,\epsilon}$, where $u_{0,\epsilon}$ denotes the standard mollification of $u_0$ and proceed similarly as in \cite[Lemma 2.8]{BoegelDuzSchevTime:2016}.
\end{proof}

We then begin our discussion of weak solutions. For simplicity, we will first consider the total variation flow \eqref{TVF}, which already exhibits the essential difficulties. Hence, we consider the problem 
\begin{equation}
    \label{eq: tvf Cauchy-Dirichlet problem}
    \begin{cases}
       \partial_tu-\div\bigg(\dfrac{Du}{\lvert Du\rvert}\bigg)=0&\mbox{in }\Omega\times(0,T),\\
       u(x,t)=g(x,t)&\mbox{on }\partial\Omega\times(0,T),\\
       u(x,0)=u_o(x)&\mbox{in }\Omega.
     \end{cases}
\end{equation}
Let us recall the approach using the Anzellotti pairing. Following \cite{Andreu-Caselles-Mazon:book}, a function $u\in C([0,T],L^2(\Omega))$ with $\partial_t u \in L^1(0,T;L^2(\Omega))$ is said to be a weak solution of \eqref{eq: tvf Cauchy-Dirichlet problem} if there exists a vector field $z\in L^\infty(\Omega_T)$ with $\|z\|_\infty \leq 1$ such that 
\begin{equation}
    \label{divergence as distributions}
    \div z = \partial_t u,
\end{equation}
in the sense of distributions on $\Omega_T$. The vector field $z$ should additionally satisfy the pairing condition 
\begin{equation}
    \label{strong pairing condition}
    (z(t),Du(t))=\|Du(t)\|  \text{ as measures, for a.e. $t\in (0,T)$},
\end{equation} 
and the boundary condition 
\begin{equation}
    \label{normal trace condition}
    [z(t),\nu_\Omega] \in \sign (Tg(t)-Tu(t)), \mathcal H^{n-1}- \text{a.e. for a.e. $t\in(0,T)$.}
\end{equation}
In this article, we consider the two sign functions defined by
\begin{equation*}
    \sign(a)
        =\begin{cases}
            \{1\} ,&\quad \text{if } a>0,\\
            [-1,1],&\quad \text{if } a=0,\\
            \{-1\},&\quad \text{if } a<0,
        \end{cases}
    \,\, \text{ and } \,\, \sign_0(a)
        =\begin{cases}
            1 ,&\quad \text{if } a>0,\\
            0,&\quad \text{if } a=0,\\
            -1,&\quad \text{if } a<0.
        \end{cases}
\end{equation*}

The vector field $z$ can be thought of as formally representing the quotient $\frac{Du}{\lvert Du\rvert}$. Given the construction of the Anzellotti pairing, the a priori assumption $\partial_t u\in L^1(0,T;L^2(\Omega))$ is clearly seen to be essential to state the pairing condition in the form \eqref{strong pairing condition}, and to make use of the boundary condition in the form \eqref{normal trace condition}.

Let us first comment on how to combine \eqref{divergence as distributions} and \eqref{normal trace condition}, still under the assumption $\partial_t u \in L^1(0,T;L^2(\Omega))$. The goal is to transform the conditions so as not to require an additional assumption on $\partial_t u$. Let $t\in (0,T)$. Using the Gauss-Green formula \eqref{eq: Gauss Green in W11}, \eqref{divergence as distributions}, and the assumption on the time derivative we have
\begin{equation*}
\begin{split}
    \int_{\Omega} w \partial_t u (t) \dx + \int_\Omega (z(t),Dw) & = \int_{\partial\Omega} [z(t),\nu_\Omega] Tw\dH \\
    &=\int_{\partial\Omega} \sign_0(Tg(t)-Tu(t)) Tw\dH, 
\end{split}
\end{equation*}
for any $w\in \BV(\Omega)$, satisfying the following trace condition
\begin{equation}
    \label{abs continuity of traces condition}
    Tg(t)(x)=Tu(t)(x) \implies Tw(t)(x)=0, \text{ for } \mathcal H^{n-1}\text{-a.e. } x\in \partial \Omega.
\end{equation}
The above condition appears naturally related to the differentiability of $\lvert \cdot \rvert$, as in \cite{Anzellotti:1985}. For $w\in L^1(0,T; W^{1,1}(\Omega))$ satisfying \eqref{abs continuity of traces condition} for a.e. $t\in (0,T)$, we integrate over $(0,T)$ to conclude
\begin{equation}
    \label{strong divergence condition with trace}
    \iint_{\Omega_T} w \partial_t u \dx\dt + \iint_{\Omega_T}z\cdot \nabla w\dx\dt = \iint_{\partial\Omega\times(0,T)} \sign_0(Tg-Tu) Tw\dH\dt, 
\end{equation}
Note that by taking $w$ with zero trace in the above we recover \eqref{divergence as distributions}. Moreover, testing \eqref{strong divergence condition with trace} with a test function whose trace is $g-u$, and applying Green's formula to the left hand side shows that
\begin{equation*}
    \iint_{\partial\Omega\times(0,T)} [z(t),\nu_\Omega] (Tg-Tu)\dH\dt = \iint_{\partial\Omega\times(0,T)} \lvert Tg-Tu\rvert\dH\dt.
\end{equation*}
Combining the above with the inequality $[z(t),\nu_\Omega] \leq 1$ shows that for a.e. $t\in (0,T)$ we have $[z(t),\nu_\Omega](Tg(t)-Tu(t))=\lvert Tg(t)-Tu(t)\rvert$, $\mathcal{H}^{n-1}$-a.e., which implies \eqref{normal trace condition}. 

We conclude that \eqref{strong divergence condition with trace} is equivalent to \eqref{divergence as distributions} together with \eqref{normal trace condition}. Using integration by parts, we thus find that the conjunction of \eqref{divergence as distributions} and \eqref{normal trace condition} is equivalent to
\begin{equation}
    \label{medium divergence condition with trace}
    \iint_{\Omega_T}z\cdot \nabla w\dx\dt -\iint_{\Omega_T} u \partial_t w \dx\dt = \iint_{\partial\Omega\times(0,T)} \sign_0(Tg-Tu) Tw\dH\dt, 
\end{equation}
for $w\in L^1(0,T;W^{1,1}(\Omega))$, compactly supported in time, with $\partial_t w \in L^2(\Omega_T)$ and satisfying \eqref{abs continuity of traces condition} for a.e. $t\in (0,T)$.

Let us also see how to recast the pairing condition. Integrating \eqref{strong pairing condition} over $(0,T)$ for a test function $\varphi\in C_0^\infty(\Omega_T)$ and using integration by parts shows
\begin{align*}
    \iint_{\Omega_T} \varphi \,d\|Du\|\dt &  = \iint_{\Omega_T} \varphi\,d (z(t),Du(t))\dt \\
    & = - \iint_{\Omega_T} u \varphi \div z + u z\cdot \nabla \varphi \dx\dt \\
    & = -\iint_{\Omega_T} u \varphi \partial_t u + u z\cdot \nabla \varphi \dx\dt \\
    & = \frac{1}{2}\iint_{\Omega_T} u^2 \partial_t \varphi \dx\dt -\iint_{\Omega_T} u z\cdot \nabla \varphi \dx\dt.
\end{align*}
It is not difficult to see that if $\partial_t u\in L^1(0,T;L^2(\Omega))$ then the condition 
\begin{equation}
    \label{tvf pairing condition}
    \iint_{\Omega_T} \varphi \,d\|Du\|\dt = \frac{1}{2}\iint_{\Omega_T} u^2 \partial_t \varphi \dx\dt -\iint_{\Omega_T} u z\cdot \nabla \varphi \dx\dt,
\end{equation}
for $\varphi\in C_0^\infty(\Omega_T)$, together with $\div z= \partial_t u$ implies \eqref{strong pairing condition}. Hence, in total we see that the conjunction of \eqref{medium divergence condition with trace} and \eqref{tvf pairing condition} is equivalent to the conditions in the definition of a weak solution. Moreover, neither of the conditions explicitly require the time derivative to state. 

However, assuming \eqref{medium divergence condition with trace} will not suffice in general, due to the restriction imposed on the traces in \eqref{abs continuity of traces condition}. The assumption does guarantee that $\div z = \partial_t u$ in the sense of distributions, as we can always test with smooth functions vanishing on the boundary. Yet, in the absence of time derivatives, we will also want to test the equation with a time mollification of $u-g$, which no longer satisfies the trace condition, and as such is not admissible.

Hence, we propose replacing \eqref{medium divergence condition with trace} with another condition, which takes the form
\begin{equation}
    \label{TVF divergence condition}
    \iint_{\Omega_T} z\cdot \nabla \varphi - u\partial_t \varphi \dx\dt \leq  \iint_{\partial\Omega\times(0,T)} \left(\lvert Tg-Tu+T\varphi\rvert - \lvert Tg-Tu\rvert \right)\, d \mathcal{H}^{n-1}\dt,
\end{equation}
for $\varphi\in L^1(0,T;W^{1,1}(\Omega))$, compactly supported in time, with $\partial_t\varphi\in L^1(0,T;L^2(\Omega))$. Let us note that if $\varphi\in C_0^\infty(\Omega_T)$ the right hand side of the above vanishes, and we can argue by linearity to conclude $\div z = \partial_t u$ in the sense of distributions. Moreover, if $\varphi$ satisfies the trace condition \eqref{abs continuity of traces condition}, then testing \eqref{TVF divergence condition} with $h\varphi$, dividing by $h$ and letting $h\to 0$ shows
\begin{equation}
    \iint_{\Omega_T} z\cdot \nabla \varphi - u\partial_t \varphi \dx\dt \leq  \iint_{\partial\Omega\times(0,T)} \sign_0(Tg-Tu) T\varphi\, d \mathcal{H}^{n-1}\dt.
\end{equation}
Again, by linearity, the above implies \eqref{medium divergence condition with trace}. Note that for taking the limit it is crucial that $\varphi$ satisfies the trace condition. On the other hand, it is not very hard to see that \eqref{medium divergence condition with trace} implies \eqref{TVF divergence condition}, if $\partial_t u\in L^1(0,T;L^2(\Omega))$.

Before stating the general defintion of weak solutions of \eqref{PDE}, recall that the convex conjugate of $f$ is defined by
\begin{equation*}
    f^*(x,z)=\sup_{z^*\in\R^n} z\cdot z^* - f(x,z^*),
\end{equation*}
and the subdifferential is given by
\begin{equation*}
    z\in \partial_\xi f(x,\nabla u) \iff z\cdot (\xi-\nabla u)\leq f(x,\xi)-f(x,\nabla u) \text{ for all } \xi\in\R^n.
\end{equation*}
\begin{definition}
\label{def: weak solution}
A function $u\in L^1_{w^*}(0,T;\BV(\Omega))\cap L^\infty(0,T;L^2(\Omega))$ is a weak solution of \eqref{PDE} if there exists a vector field $z\in L^\infty(\Omega_T,\R^n)$ such that
\begin{enumerate}[(i)]
\item $z\in \partial_\xi f(x,\nabla u)$ for a.e. $(x,t)\in\Omega_T$,
\item \begin{equation}
    \label{divergence condition}
        \begin{split}
        & \iint_{\Omega_T} z\cdot \nabla\varphi - u\partial_t \varphi \dx\dt \\
        & \leq  \iint_{\partial\Omega\times(0,T)} \left(\lvert T\varphi+Tg-Tu\rvert - \lvert Tg-Tu\rvert \right) f^\infty(\cdot, \nu_\Omega) \, d \mathcal{H}^{n-1}\dt,
    \end{split}
\end{equation}
for all $\varphi\in L^1(0,T;W^{1,1}(\Omega))$, compactly supported in time, with $\partial_t\varphi\in L^2(\Omega_T)$,
\item 
\begin{equation}
\label{pairing condition}
\begin{split}
    & \iint_{\Omega_T} \varphi \,df(Du)\dt + \iint_{\Omega_T} \varphi f^*(x,z)\dx\dt \\
    & = \frac{1}{2} \iint_{\Omega_T} u^2 \partial_t\varphi \dx\dt -\iint_{\Omega_T} u z\cdot \nabla\varphi\dx\dt,
\end{split}
\end{equation}
for all $\varphi\in C_0^\infty(\Omega_T)$, and 
\item \begin{equation}
    \label{initial value condition}
    \lim_{h\to 0}\frac{1}{h}\int_0^h\|u(t)-u_0\|_{L^2(\Omega)}^2 \dt = 0.
\end{equation}
\end{enumerate}
\end{definition}
\begin{remark}
\begin{enumerate}[(i)]
\item Writing \eqref{divergence condition} for $\varphi\in C_0^\infty(\Omega_T)$ and arguing by linearity shows that $\div z= \partial_t u$ in the sense of distributions. 
\item In the case $\partial_t u\in L^1(0,T;L^2(\Omega))$ the condition \eqref{pairing condition} is equivalent with 
\begin{equation*}
    \int_\Omega \varphi\, d f(Du(t)) + \int_\Omega f^*(x,z(x,t))\dx = -\int_\Omega \varphi u(t) \partial_t u(t) \dx - \int_\Omega u(t) z(t)\cdot\nabla \varphi\dx,
\end{equation*} 
for a.e. $t\in (0,T)$ and for $\varphi\in C_0^\infty(\Omega)$. As $\div z(t)=\partial_t u(t)$, the right hand side of the above is the standard Anzellotti pairing of $z$ and $u$. In this sense, the right hand side of \eqref{pairing condition} is a form of the Anzellotti pairing, partially integrated over time to eliminate the time derivative on $u$.
\item In the above definition, the initial condition is in an integrated form. This is a weaker assumption than requiring that $u(t)\to u_0$ in $L^2(\Omega)$ as $t\to 0$. The integrated form appears naturally starting from the variational definition, as seen in \Cref{lemma: var sol attains init val} and \cite[Lemma 2.8]{BoegelDuzSchevTime:2016}.
\end{enumerate}
\end{remark}

\section{Approximation results}
\label{section: approximations}
This section proves two approximation results which will be of later use in our study of weak and variational solutions to parabolic equations of linear growth. In the first part, we show the existence of sequences in $L^1(0,T;W^{1,1}(\Omega))$ approximating parabolic BV functions in the strict sense. In the second part we recall the exponential time mollification and show the area-strict convergence of this approximation.

\subsection{Approximation with parabolic Sobolev functions}
Let $\Omega\subset\R^n$ be an open set and let $u\in L^1_{w^*}(0,T;\BV(\Omega))$. We say that a sequence $u_i\in L^1_{w^*}(0,T;\BV(\Omega))$, $i\in\mathbb N$, converges strictly to $u$ in $\Omega_T$ if $u_i\to u$ in $L^1(\Omega_T)$ as $i\to\infty$ and
\begin{equation}
    \label{parabolic strict conv def}
    \lim_{i\to\infty}\int_0^T \left\lvert  \|Du\|(\Omega)- \|D u_i\|(\Omega) \right\rvert\dt = 0.
\end{equation}

Let $\Omega$ be bounded. A sequence $u_i\in L^1_{w^*}(0,T;\BV(\Omega))$, $i\in\mathbb N$, converges area-strictly to $u$ in $\Omega_T$ if $u_i\to u$ in $L^1(\Omega_T)$ as $i\to\infty$ and
\begin{equation}
    \label{parabolic area strict conv def}
    \lim_{i\to\infty}\int_0^T \left\lvert \mathcal A(Du)(\Omega)-\mathcal A(D u_i)(\Omega) \right\rvert\dt = 0.
\end{equation}

We present two lemmas on strict convergence in $L^1_{w^*}(0,T;\BV(\Omega))$. The first one is a parabolic version of \cite[Lemma 1]{KristensenRindler2010Young}. 
\begin{lemma}
\label{lemma: strict approx in parabolic bv}
Let $1\leq p<\infty$. Let $\Omega\subset\R^n$ be an open set and let $u\in L^1_{w^*}(0,T;\BV(\Omega))$. Then there exists a sequence $u_i\in L^1(0,T;W^{1,1}(\Omega))$, $i\in\mathbb N$, with the following properties.
\begin{enumerate}[(i)]
    \item $u_i\to u$ strictly in $\Omega_T$ as $i\to\infty$ and $u_i(t)\to u(t)$ strictly in $\Omega$ as $i\to\infty$, for a.e. $t \in(0,T)$.
    \item If $\Omega$ is bounded, $u_i\to u$ area-strictly in $\Omega_T$ as $i\to\infty$ and $u_i(t)\to u(t)$ area-strictly in $\Omega$ as $i\to\infty$, for a.e. $t \in(0,T)$.
    \item For every $i$, the zero extension of $u-u_i$ to $(\R^n\backslash\Omega) \times(0,T)$ satisfies $u-u_i\in L^1(0,T;\BV(\R^n))$ and for a.e. $t\in(0,T)$ 
    \begin{equation*}
        \|D(u-u_i)(t)\|(\partial\Omega)=0.
    \end{equation*}
    \item If $u\in L^p(\Omega_T)$, then $u_i\to u$ in $L^p(\Omega_T)$ as $i\to\infty$.
    \item If $\partial_t u\in L^p(\Omega_T)$, then $\partial_t u_i\in L^p(\Omega_T)$ and $\partial_t u_i\to \partial_t u$ in $L^p(\Omega_T)$ as $i\to\infty$. Similarly, if $\partial_t u \in L^1(0,T;L^p(\Omega))$ then $\partial_t u_i\in L^1(0,T;L^p(\Omega))$ and $\partial_t u_i\to \partial_t u$ in $L^1(0,T;L^p(\Omega))$ as $i\to\infty$.
    \item If for some $u_0\in L^p(\Omega)$ we have $u(t)\to u_0$ in $L^p(\Omega)$ as $t\to 0$, then there exists $u_{0,i}\in L^p(\Omega)$ with $u_i(t)\to u_{0,i}$ in $L^p(\Omega)$ as $t \to 0$. Moreover, $u_{0,i}\to u_0$ in $L^p(\Omega)$ as $i\to\infty$.
\end{enumerate}
\end{lemma}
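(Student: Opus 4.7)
The plan is to carry out, pointwise in time, the spatial partition-of-unity mollification of \cite{KristensenRindler2010Young} (see also \cite{AmbrosioFuscoPallara,Giusti1984}) with a deterministic choice of parameters, so that measurability in $t$ is preserved and the time regularity of $u$ passes to $u_i$. Concretely, I would fix an increasing exhaustion $\Omega^k \subset\subset \Omega^{k+1}$ of $\Omega$, annular sets $A_k$ covering $\Omega$ with finite overlap, a subordinate partition of unity $\sum_k \eta_k = 1$ with $\supp \eta_k \subset A_k \subset\subset \Omega$, and radii $\epsilon_{i,k} \in (0, \dist(\supp \eta_k, \partial \Omega^{k+1})/2)$ satisfying $\epsilon_{i,k} \to 0$ as $i \to \infty$ and shrinking fast enough as $k$ increases so that the trace of $u$ is preserved. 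Then define
\begin{equation*}
u_i(x,t) = \sum_k \bigl(\rho_{\epsilon_{i,k}} \ast (\eta_k u(\cdot, t))\bigr)(x),
\end{equation*}
where $\rho_\epsilon$ is a standard spatial mollifier. The sum is locally finite in $\Omega$, giving $u_i(\cdot, t) \in C^\infty(\Omega) \cap W^{1,1}(\Omega)$, and the dependence on $t$ is through $u(\cdot, t)$ alone, so $u_i \in L^1(0, T; W^{1,1}(\Omega))$.

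For each $t$ with $u(t) \in \BV(\Omega)$, the classical Kristensen--Rindler argument (exploiting $\sum_k \nabla \eta_k = 0$ to absorb the partition-of-unity error) yields strict and area-strict convergence of $u_i(t) \to u(t)$ in $\Omega$, together with the trace identity $Tu_i(t) = Tu(t)$ on $\partial \Omega$. To upgrade the pointwise-in-$t$ strict convergence to the parabolic statements \eqref{parabolic strict conv def} and \eqref{parabolic area strict conv def}, I would invoke dominated convergence using the uniform bound
\begin{equation*}
\bigl|\|Du_i(t)\|(\Omega) - \|Du(t)\|(\Omega)\bigr| \leq C\bigl(\|Du(t)\|(\Omega) + \|u(t)\|_{L^1(\Omega)}\bigr),
\end{equation*}
which is integrable in $t$ since $u \in L^1_{w^*}(0,T; \BV(\Omega)) \cap L^1(\Omega_T)$, together with the analogous bound for the area functional when $\Omega$ is bounded.

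Properties (iii)--(vi) would then follow from the structure of the construction. Item (iii) is immediate from \Cref{lemma: derivative of piecewise def function} combined with $Tu_i(t) = Tu(t)$, which forces $\|D(u - u_i)(t)\|(\partial \Omega) = 0$ for the zero extension. For (iv), the $L^p$-continuity of spatial mollification gives $u_i \to u$ in $L^p(\Omega_T)$ via dominated convergence. For (v), spatial convolution commutes with $\partial_t$, so $\partial_t u_i$ is the identical construction applied to $\partial_t u$ and converges in the required space. For (vi), applying the construction to $u_0$ defines $u_{0,i}$, and the convergence $u_i(t) \to u_{0,i}$ in $L^p(\Omega)$ as $t \to 0$ follows from the continuity of the mollification in its input and the assumption $u(t) \to u_0$ in $L^p(\Omega)$. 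The principal technical delicacy lies in the careful choice of the radii $\epsilon_{i,k}$ guaranteeing the trace preservation $Tu_i(t) = Tu(t)$; this is precisely the ingredient already present in the time-independent theory, and once it is in place the parabolic extension reduces to measurability considerations and dominated convergence.
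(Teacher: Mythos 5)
Your construction is the same partition-of-unity mollification as the paper's, but the criterion you use to select the radii is too weak, and the two convergence steps you hang on it do not hold.

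You propose taking $\epsilon_{i,k}$ satisfying only geometric constraints (support containment, $\epsilon_{i,k}\to 0$ as $i\to\infty$, and shrinking in $k$ "so that the trace of $u$ is preserved") and then invoking the Kristensen--Rindler argument ``for each fixed $t$.'' That argument does not apply for a fixed $t$ unless the radii are chosen so that the partition-of-unity correction
\begin{equation*}
\sum_{k}\int_\Omega \bigl|\varphi_{\epsilon_{i,k}}*(u(t)\nabla\eta_k)-u(t)\nabla\eta_k\bigr|\dx
\end{equation*}
is small --- and that sum depends on $u(t)$. With purely geometric radii there is no reason this tail is even finite for a.e.\ $t$: each term is only controlled by $2\|\nabla\eta_k\|_\infty\|u(t)\|_{L^1(U_k)}$, and for a general open $\Omega$ the quantities $\|\nabla\eta_k\|_\infty$ must blow up as the annuli thin out near $\partial\Omega$, so the series over $k$ can diverge. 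For the same reason the uniform bound you invoke for dominated convergence,
\begin{equation*}
\bigl|\,\|Du_i(t)\|(\Omega)-\|Du(t)\|(\Omega)\,\bigr|\leq C\bigl(\|Du(t)\|(\Omega)+\|u(t)\|_{L^1(\Omega)}\bigr)
\end{equation*}
with $C$ independent of $i$, is not justified: the correction term has no such uniform control.

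The paper avoids both problems by selecting $\epsilon_i$ to satisfy the space--time smallness conditions \eqref{eq: strict approximation conv in l1} and \eqref{eq: strict approximation choice for strict conv}, i.e.\ integrals over $\Omega_T$. These force $\sum_i\iint_{\Omega_T}|\nabla\eta_i u-\varphi_{\epsilon_i}*(\nabla\eta_i u)|\dx\dt<\epsilon$, which gives parabolic strict (and area-strict) convergence directly, guarantees that the defining series converges for a.e.\ $t$ (see \eqref{eq: strict approximation choose time at which series converges}), and from which the pointwise-in-$t$ strict convergence is then extracted by subsequence --- the logic runs in the opposite order from what you propose. So you should replace your geometric radii condition by the paper's integral one, prove the parabolic strict convergence first, and obtain the pointwise statement as a corollary rather than the other way around. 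Items (iii)--(vi) of your argument are essentially correct and mirror the paper once the radii are chosen correctly.
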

\begin{proof}
The construction is similar to the one in \cite[Lemma 1]{KristensenRindler2010Young}. Let us first show how to obtain the basic construction, giving $(i)$ and $(iii)$, and then explain how to obtain the properties $(ii)$ and $(iv)-(vi)$ under their respective assumptions. Let $\epsilon>0$. For $(i)$, the lower semicontinuity of the $\BV$ norm implies that it is enough to find a function $u_\epsilon\in L^1(0,T;W^{1,1}(\Omega))$ such that 
\begin{align*}
    \iint_{\Omega_T} \lvert u_\epsilon -u\rvert \dx\dt \leq \epsilon,
\end{align*}
and
\begin{align*}
    \iint_{\Omega_T} \lvert \nabla u_\epsilon \rvert \dx\dt \leq \int_0^T\|Du\|(\Omega)\dt+\epsilon.
\end{align*}
Let $\Omega_i\subset\Omega$, $i\in \mathbb N$, be open sets such that $\Omega_i\Subset\Omega_{i+1}$ and $\bigcup_{i\in\mathbb N} \Omega_i=\Omega$. Let $U_0=\Omega_1$ and $U_i=\Omega_{i+1}\backslash \overline{\Omega}_{i-1}$ for $i=1,2,3,\dots$. The sets $U_i$ form an open cover of $\Omega$ with the property that each point $x$ belongs to at most two of the sets. Let $\eta_i$ be a partition of unity with respect to the covering $U_i$. That is, $\eta_i\in C_0^\infty(U_i)$, $0\leq \eta_i\leq 1$ and $\sum_{i=0}^\infty \eta_i = 1$ in $\Omega$.

Let $\varphi$ be a standard mollifier. For each $i\in \mathbb  N$ there exists $\epsilon_i$ such that 
\begin{equation}
    \label{eq: strict approximation supp contained}
    \supp (\varphi_{\epsilon_i}*\eta_i)\subset U_i,
\end{equation}
\begin{equation}
    \label{eq: strict approximation conv in l1}
    \iint_{\Omega_T} \lvert \eta_i u-\varphi_{\epsilon_i}*(\eta_i u)\rvert\dx\dt < \epsilon 2^{-i-1},
\end{equation}
and
\begin{equation}
    \label{eq: strict approximation choice for strict conv}
    \iint_{\Omega_T} \lvert \nabla \eta_i u-\varphi_{\epsilon_i}*(\nabla \eta_i u)\rvert\dx\dt <\epsilon 2^{-i-1}.
\end{equation} 
Define 
\begin{equation*}
    u_\epsilon(x,t)= \sum_{i=0}^\infty \varphi_{\epsilon_i}*(\eta_i u)(x,t)=\sum_{i=0}^\infty \varphi_{\epsilon_i}*(\eta_i u(t))(x).
\end{equation*}
By \eqref{eq: strict approximation conv in l1}
\begin{align*}
    \iint_{\Omega_T} \lvert u_\epsilon -u\rvert \dx\dt \leq \sum_{i=0}^\infty \int_{\Omega} \lvert \eta_i u-\varphi_{\epsilon_i}*(\eta_i u)\rvert\dx\dt <\epsilon.
\end{align*}
Moreover, we see that $u_\epsilon(\cdot,t)\in C^\infty(\Omega)$ and as $\sum_{i=0}^\infty \nabla \eta_i = 0$ we have
\begin{equation}
    \label{derivative of smooth approximation}
    \nabla u_\epsilon = \sum_{i=0}^\infty \varphi_{\epsilon_i} * (\eta_i Du) + \sum_{i=0}^\infty\left( \varphi_{\epsilon_i}*(u\nabla\eta_i)-u\nabla \eta_i \right).
\end{equation}
From this, we conclude that 
\begin{align*}
    \iint_{\Omega_T} \lvert \nabla u_\epsilon \rvert \dx\dt &  \leq \sum_{i=0}^\infty \iint_{\Omega_T} \varphi_{\epsilon_i}*(\eta_i\lvert Du\rvert)\dx\dt+\sum_{i=0}^\infty \iint_{\Omega_T} \lvert \nabla \eta_i u-\varphi_{\epsilon_i}*(\nabla \eta_i u)\rvert\dx\dt \\
    & \leq  \sum_{i=0}^\infty \iint_{\Omega_T} \eta_i\,d\lvert Du\rvert\dt+\epsilon \\
    & = \int_0^T\|Du\|(\Omega)\dt+\epsilon.
\end{align*}
To see that $(iii)$ holds we argue as in \cite{KristensenRindler2010Young}. Denote 
\begin{equation*}
        w
        =\begin{cases}
            u_\epsilon -u, & \text{ on } \Omega\\
            0 , & \text{ on } \R^n\backslash\overline{\Omega}.
        \end{cases}
\end{equation*}
Let $0<t<T$ be such that $u(t)\in \BV(\Omega)$ and
\begin{equation}
    \label{eq: strict approximation choose time at which series converges}
    \sum_{i=0}^\infty\int_{\Omega} \lvert \nabla \eta_i u(t)-\varphi_{\epsilon_i}*(\nabla \eta_i u(t))\rvert\dx <\infty.
\end{equation} 
This holds for a.e. $t\in (0,T)$ by \eqref{eq: strict approximation conv in l1} and \eqref{eq: strict approximation choice for strict conv} respectively. Let $\psi\in C_0^1(\R^n,\R^n)$ with $\|\psi\|_\infty\leq 1$. From \eqref{eq: strict approximation supp contained} we find that $\supp \varphi_{\epsilon_i}*(\eta_i u(t))\subset U_i$, and hence
\begin{equation}
\label{eq: extension is BV}
\begin{split}
    \int_{\R^n} w(t) \div \psi \dx & =\int_\Omega (u_\epsilon(t)-u(t))\div \psi\dx \\
    & = \sum_{i=0}^\infty \int_\Omega (\varphi_{\epsilon_i}*(\eta_i u(t))-\eta_i u(t)) \div \psi\dx \\
    & = \sum_{i=0}^\infty \left( \int_{U_i} \eta_i \psi \,d Du(t) - \int_{U_i} \psi \varphi_{\epsilon_i}*(\eta_i Du(t))\dx \right) \\
    & \quad + \sum_{i=0}^\infty \int_{U_i}\psi \left( u(t) \nabla\eta_i-\varphi_{\epsilon_i} *(u(t)\nabla\eta_i)\ \right)\dx.
\end{split}
\end{equation}
From \eqref{eq: strict approximation choose time at which series converges} we conclude 
\begin{equation*}
    \|Dw(t)\|(\R^n)\leq 2 \|Du(t)\|(\Omega)+\sum_{i=0}^\infty\int_{\Omega_T} \lvert \nabla \eta_i u(t)-\varphi_{\epsilon_i}*(\nabla \eta_i u(t))\rvert\dx\dt <\infty.
\end{equation*}
Moreover, integrating \eqref{eq: extension is BV} over $(0,T)$ shows that 
\begin{equation*}
    \int_0^T \|Dw(t)\|(\R^n)\dt \leq 2\int_0^T\|Du(t)\|+\epsilon,
\end{equation*}
and hence $w\in L^1(0,T;\BV(\R^n))$. To see that $\|Dw(t)\|(\partial\Omega)=0$ for a.e. $t\in (0,T)$ we let $k\in\mathbb N$ and take $U\supset\partial\Omega$ open with $U\cap U_i=\emptyset$ for all $i<k$. Let $\psi\in C_0^1(U,\R^n)$ with $\|\psi\|_\infty\leq 1$. From \eqref{eq: extension is BV} we have
\begin{align*}
    \int_{U} w(t) \div \psi \dx & \leq  \sum_{i=k}^\infty \left( \int_{U_i} \eta_i \,d \lvert Du(t)\rvert + \int_{U_i} \varphi_{\epsilon_i}*(\eta_i \lvert Du(t)\rvert\dx \right) \\
    & \quad + \sum_{i=k}^\infty \int_{U_i}\lvert u(t) \nabla\eta_i-\varphi_{\epsilon_i} *(u(t)\nabla\eta_i)\rvert\dx.
\end{align*}
Taking supremum over $\psi$ we arrive at
\begin{align*}
    \|Dw(t)\|(\partial\Omega)& \leq \|Dw(t)\|(U) \\
    & \leq \sum_{i=k}^\infty \left( \int_{U_i} \eta_i \,d \lvert Du(t)\rvert + \int_{U_i} \varphi_{\epsilon_i}*(\eta_i \lvert Du(t)\rvert)\dx \right) \\
    & \quad + \sum_{i=k}^\infty \int_{U_i}\lvert u(t) \nabla\eta_i-\varphi_{\epsilon_i} *(u(t)\nabla\eta_i)\rvert\dx,
\end{align*}
and letting $k\to \infty$ proves the claim, as the series in the right hand side of the above converge.

Let us consider the case when $\Omega\subset\R^n$ is bounded and see that $(ii)$ holds. By lower semicontinuity of the area functional, it is enough to show that
\begin{equation}
    \label{eq: area strict conv of smooth approx}
    \int_0^T \mathcal A(Du_{\epsilon})(\Omega)\dt \leq \int_0^T \mathcal A(Du)(\Omega)\dt +\epsilon.
\end{equation}
Applying the estimate $\mathcal A(\xi_1+\xi_2)\leq \mathcal A(\xi_{1})+\lvert\xi_2\rvert$ for all $\xi_1,\xi_2\in\R^n$, \eqref{derivative of smooth approximation} shows that 
\begin{align*}
    & \iint_{\Omega_T} \mathcal A(\nabla u_\epsilon) \dx\dt \\
    & \leq \sum_{i=0}^\infty \iint_{\Omega_T} \mathcal A( \varphi_{\epsilon_i}*(\eta_i Du))\dx\dt+\sum_{i=0}^\infty \iint_{\Omega_T} \lvert \nabla \eta_i u-\varphi_{\epsilon_i}*(\nabla \eta_i u)\rvert\dx\dt. 
\end{align*}
To estimate the first term in the above we use Jensen's inequality and the convexity of $\mathcal A$ to obtain
\begin{align*}
    \sum_{i=0}^\infty \iint_{\Omega_T} \mathcal A( \varphi_{\epsilon_i}*(\eta_i Du))\dx\dt & \leq \sum_{i=0}^\infty \iint_{\Omega_T}  \varphi_{\epsilon_i}*\mathcal A(\eta_i Du)\dx\dt \\
    & = \sum_{i=0}^\infty \int_0^T \mathcal A(\eta_i Du)(\Omega)\dt \\
    & \leq \sum_{i=0}^\infty \iint_{\Omega_T}\eta_i \,d \mathcal A(Du)\dt+\int_\Omega(1-\eta_i)\dx\dt \\
    & = \int_0^T \mathcal A(Du)(\Omega)\dt.
\end{align*}
Recalling \eqref{eq: strict approximation choice for strict conv} we have thus shown \eqref{eq: area strict conv of smooth approx}.

To show $(iv)$, we assume $u\in L^p(\Omega_T)$ and choose $\epsilon_i$ to satisfy in addition to the previous constraints also
\begin{equation*}
    \left(\iint_{\Omega_T} \lvert \eta_i u-\varphi_{\epsilon_i}*(\eta_i u)\rvert^p\dx\dt\right)^\frac{1}{p} < \epsilon 2^{-i-1},
\end{equation*}
which gives
\begin{equation*}
    \|u_\epsilon-u\|_{L^p(\Omega_T)}<\epsilon.
\end{equation*}
For $(v)$, if $\partial_t \varphi \in L^p(\Omega_T)$, then we see that 
\begin{equation*}
    \partial_t u_\epsilon = \sum_{i=0}^\infty \varphi_{\epsilon_i}*(\eta_i \partial_t u)
\end{equation*}
and we choose $\epsilon$ to satisfy
\begin{equation*}
    \left(\iint_{\Omega_T} \lvert \eta_i \partial_t u-\varphi_{\epsilon_i}*(\eta_i \partial_t u)\rvert^p\dx\dt\right)^\frac{1}{p} < \epsilon 2^{-i-1}.
\end{equation*}
A similar argument shows the claim for $\partial_t u\in L^1(0,T;L^p(\Omega_T))$. 

Finally, lets show $(vi)$. Suppose
\begin{equation*}
    \lim_{t\to 0}\|u(t)-u_0\|_{L^p(\Omega)} = 0,
\end{equation*}
and denote
\begin{equation*}
    u_{0,\epsilon} = \sum_{i=0}^\infty \varphi_{\epsilon_i}*(\eta_i u_0).
\end{equation*}
We claim that $u_\epsilon(t)\to u_{0,\epsilon}$ in $L^p(\Omega)$ as $\to 0$. To see this, note that as $\supp(\varphi_{\epsilon_i}*(\eta_i(u-u_0))\subset U_i$ and each $x\in \Omega$ is contained in at most two of the sets $U_i$ we have 
\begin{align*}
    \int_\Omega \lvert u_\epsilon(x,t)-u_{0,\epsilon}\rvert^p\dx & = \int_\Omega \left\lvert \sum_{i=0}^\infty \varphi_{\epsilon_i} *(\eta_i(u(x,t)-u_0(x))\right\rvert^p\dx \\
    & \leq 2^p \sum_{i=0}^\infty \int_\Omega \left\lvert \varphi_{\epsilon_i} *(\eta_i(u(x,t)-u_0(x))\right\rvert^p\dx \\
    & \leq 2^p\sum_{i=0}^\infty \int_\Omega \left\lvert (\eta_i(u(x,t)-u_0(x))\right\rvert^p\dx \\
    & \leq 2^p \|u(t)-u_0\|_{L^p(\Omega)}^p.
\end{align*}
To conclude that $u_{0,\epsilon}\to u_0$ in $L^p(\Omega)$ as $\epsilon\to 0$, we choose $\epsilon_i$ to satisfy 
\begin{equation*}
    \left(\int_\Omega \lvert \eta_i u_0- \varphi_{\epsilon_i}*(\eta_i u_0)\rvert^p\dx\right)^\frac{1}{p}<\epsilon 2^{-i-1},
\end{equation*}
which finishes the proof of $(vi)$.
\end{proof}

\subsection{Mollification in time}
Given $w\in L^1(\Omega)$ and $\delta>0$, define the exponential time mollification of $u\in L^1(\Omega_T)$ by
\begin{equation}
    \label{time mollification}
    u^\delta(x,t)=e^{-\frac{t}{\delta}}w+\frac{1}{\delta}\int_0^t e^{\frac{s-t}{\delta}} u(x,s)\ds.
\end{equation}
The function $w$ represents the initial value for the mollification.  In our application to parabolic problems of linear growth, we will want to choose $w=u_0$. However, we must  approximate $u_0$ to ensure $u^\delta\in L^1_{w^*}(0,T;\BV(\Omega))$, since the initial data is in general only in $L^2$. 

The following properties of the time mollification will be useful to us. For a proof we refer to \cite[Appendix B]{BDM2013}, see also \cite[Lemma 2.2]{KinnunenLindqvist2006}.
\begin{lemma}
    Let $1\leq p <\infty $, $u\in L^p(\Omega_T)$ and $w\in L^p(\Omega)$. Then the time mollification $u^\delta$ satisfies the following properties. 
    \begin{enumerate}[(i)]
        \item We have $u^\delta\in L^p(\Omega_T)$ with \begin{equation*}
            \| u^\delta\|_{L^p(\Omega_T)} \leq \|u\|_{L^p(\Omega_T)} + \delta^{\frac{1}{p}} \|w\|_{L^p(\Omega)}.
        \end{equation*} 
        Moreover, $u^\delta\to u$ in $L^p(\Omega_T)$ as $\delta\to 0$.
        \item The weak time derivative $\partial_t u^\delta$ exists with $\partial_t u^\delta\in L^p(\Omega_T)$ and satisfies
        \begin{equation}
            \label{derivative of time mollification}
            \partial_t u^\delta = \frac{1}{\delta}(u-u^\delta).
        \end{equation}
        \item If $u\in L^\infty(0,T;L^p(\Omega))$ then $u^\delta \in C([0,T],L^p(\Omega))$ and $u^\delta(0)=w$.
    \end{enumerate}
\end{lemma}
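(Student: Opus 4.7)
The proof is by direct computation from the explicit formula
\begin{equation*}
    u^\delta(x,t) = e^{-t/\delta} w(x) + \frac{e^{-t/\delta}}{\delta}\int_0^t e^{s/\delta}\, u(x,s)\, ds.
\end{equation*}
For the norm bound in $(i)$, I would apply Minkowski's inequality in $L^p(\Omega)$ inside the definition to obtain
\begin{equation*}
    \|u^\delta(\cdot,t)\|_{L^p(\Omega)} \le e^{-t/\delta}\|w\|_{L^p(\Omega)} + \frac{1}{\delta}\int_0^t e^{(s-t)/\delta}\|u(\cdot,s)\|_{L^p(\Omega)}\, ds.
\end{equation*}
The first term, raised to the $p$-th power and integrated in $t$ over $(0,T)$, contributes at most $\delta\|w\|_{L^p(\Omega)}^p$. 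The second term is the convolution of $s \mapsto \|u(\cdot,s)\|_{L^p(\Omega)}$ (extended by zero to $s < 0$) with the kernel $\rho_\delta(t) = \frac{1}{\delta}e^{-t/\delta}\mathbf{1}_{\{t\geq 0\}}$, which has $\|\rho_\delta\|_{L^1(\mathbb{R})} = 1$. Young's convolution inequality then yields the bound. The convergence $u^\delta \to u$ in $L^p(\Omega_T)$ I would prove by a density argument: for $u \in C_0^\infty(\Omega_T)$ one checks the convergence pointwise and concludes by dominated convergence using the uniform bound just established, then extends to general $u \in L^p(\Omega_T)$ using the fact that the mapping $u \mapsto u^\delta$ has operator norm bounded independently of $\delta$ on the subspace $\{w = 0\}$.

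For part $(ii)$, after rewriting $u^\delta$ in the form above, the classical result that $t \mapsto \int_0^t e^{s/\delta}u(x,s)\,ds$ is absolutely continuous for a.e.\ $x$ (with derivative $e^{t/\delta}u(x,t)$) yields, via the product rule,
\begin{equation*}
    \partial_t u^\delta = -\frac{1}{\delta}e^{-t/\delta}w - \frac{1}{\delta^2}e^{-t/\delta}\int_0^t e^{s/\delta}u\, ds + \frac{1}{\delta}u = \frac{1}{\delta}(u - u^\delta).
\end{equation*}
To identify this as the genuine weak time derivative in $L^p(\Omega_T)$, I would test against $\varphi \in C_0^\infty(\Omega_T)$ and apply Fubini on the double integral; integration by parts in the $s$-variable on the inner integral recovers the required distributional identity.

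For part $(iii)$, under the assumption $u \in L^\infty(0,T;L^p(\Omega))$, the $L^p(\Omega)$-valued map $t \mapsto \int_0^t e^{s/\delta}u(\cdot,s)\,ds$ is Lipschitz continuous by a standard Bochner-integral estimate, and multiplication by the scalar factor $e^{-t/\delta}$ together with the continuous addition of $e^{-t/\delta}w$ preserves continuity on $[0,T]$ with values in $L^p(\Omega)$. Evaluating the defining formula at $t=0$ gives $u^\delta(0) = w$ directly.

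I do not expect a serious obstacle here: each item is essentially a consequence of the explicit convolution-type formula, and the argument mirrors the one in \cite[Appendix B]{BDM2013}. The only delicate point is the rigorous identification of the formal time derivative as a weak derivative, which is handled by the standard Fubini computation sketched above.
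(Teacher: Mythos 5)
Your proof is correct and is essentially the standard direct computation from the defining formula (Minkowski plus Young's convolution inequality for the norm bound, product rule for the derivative, and Bochner-integral continuity for part $(iii)$). The paper itself does not include a proof but cites \cite[Appendix B]{BDM2013} and \cite[Lemma 2.2]{KinnunenLindqvist2006}, and your argument matches the approach in those references.
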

The following lemma proves the area-strict convergence of time mollifications. The strict convergence of the time mollification has already been observed in \cite[Lemma 2.6]{BDM2015}.
\begin{lemma}
    \label{lemma: time moll conv area strictly}
    Let $u\in L^1_{w^*}(0,T;\BV(\Omega))$, $w\in \BV(\Omega)\cap L^2(\Omega)$ and let $u^\delta$ be as in \eqref{time mollification}. Then $u^\delta\in L^1_{w^*}(0,T;\BV(\Omega))$ and for some sequence $\delta_i>0$, $i\in\mathbb N$, with $\delta_i\to 0$ as $i\to\infty$, we have  $u^{\delta_i} \to u$ area-strictly in $\Omega_T$ as $i\to\infty$.
\end{lemma}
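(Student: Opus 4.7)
The plan is to first establish membership $u^\delta\in L^1_{w^*}(0,T;\BV(\Omega))$, then to derive a pointwise-in-$t$ upper bound for $\mathcal{A}(Du^\delta(t))(\Omega)$ via Jensen's inequality, and finally to match this upper bound with a pointwise lower-semicontinuity estimate to obtain area-strict convergence along a subsequence. The $L^1(\Omega_T)$-convergence required by the definition of area-strict convergence is already provided by part~(i) of the preceding lemma with $p=1$, so the real work concerns the area functional. For the membership, differentiating the defining integral representation distributionally yields
\begin{equation*}
Du^\delta(\cdot,t)=e^{-t/\delta}Dw+\frac{1}{\delta}\int_0^t e^{(s-t)/\delta}\,Du(\cdot,s)\ds,
\end{equation*}
as Radon measures on $\Omega$, verified by pairing against test fields $\psi\in C_c^1(\Omega;\R^n)$. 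Taking total variations and applying Fubini then gives $\int_0^T\|Du^\delta(t)\|(\Omega)\dt\leq\delta\|Dw\|(\Omega)+\int_0^T\|Du(s)\|(\Omega)\ds<\infty$.

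Next, I would apply Jensen's inequality to the convex functional $\mu\mapsto\mathcal{A}(\mu)(\Omega)$, using that the weights $e^{-t/\delta}$ at $s=0$ and $\frac{1}{\delta}e^{(s-t)/\delta}$ on $(0,t)$ sum to $1$. This produces the pointwise upper bound
\begin{equation*}
\mathcal{A}(Du^\delta(t))(\Omega)\leq e^{-t/\delta}\mathcal{A}(Dw)(\Omega)+\phi^\delta(t),
\end{equation*}
where $\phi^\delta$ denotes the exponential time mollification with zero initial datum of the scalar function $\phi(t):=\mathcal{A}(Du(t))(\Omega)\in L^1(0,T)$. Standard properties of the scalar exponential mollification then give $\phi^\delta\to\phi$ in $L^1(0,T)$ as $\delta\to 0$.

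With this upper bound in hand, I would extract a subsequence $\delta_i\to 0$ along which $u^{\delta_i}(t)\to u(t)$ in $L^1(\Omega)$ for a.e.\ $t\in(0,T)$ (possible by Fubini from the $L^1(\Omega_T)$-convergence), so that classical lower semicontinuity of the area functional under $L^1$ convergence gives $\phi(t)\leq\liminf_i\mathcal{A}(Du^{\delta_i}(t))(\Omega)$ for a.e.\ $t$. Setting $f_i(t):=\mathcal{A}(Du^{\delta_i}(t))(\Omega)$, the pointwise upper bound implies
\begin{equation*}
\int_0^T(f_i-\phi)_+\dt\leq\|\phi^{\delta_i}-\phi\|_{L^1(0,T)}+\delta_i\mathcal{A}(Dw)(\Omega)\xrightarrow[i\to\infty]{}0,
\end{equation*}
while Fatou yields $\liminf_i\int_0^T f_i\dt\geq\int_0^T\phi\dt$. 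Combining these with the identity $\int_0^T(f_i-\phi)\dt=\int_0^T(f_i-\phi)_+\dt-\int_0^T(\phi-f_i)_+\dt$ forces $\int_0^T(\phi-f_i)_+\dt\to 0$, and hence $\int_0^T|\mathcal{A}(Du)(\Omega)-\mathcal{A}(Du^{\delta_i})(\Omega)|\dt\to 0$.

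The main subtlety is the Jensen step for integrals of measures: one must justify that $\mathcal{A}\bigl(\int_I\rho(s)\mu_s\ds\bigr)(\Omega)\leq\int_I\rho(s)\mathcal{A}(\mu_s)(\Omega)\ds$ holds for any probability density $\rho$ on an interval $I$ and any weakly measurable family of $\R^n$-valued Radon measures $(\mu_s)_{s\in I}$. This is most cleanly handled via the dual representation $\mathcal{A}(\mu)(\Omega)=\sup_\varphi\bigl(\int_\Omega\varphi\cdot d\mu+\int_\Omega\sqrt{1-|\varphi|^2}\dx\bigr)$ taken over $\varphi\in C_0(\Omega;\R^n)$ with $\|\varphi\|_\infty\leq 1$: for each admissible $\varphi$, the functional $\mu\mapsto\int_\Omega\varphi\cdot d\mu$ is linear in $\mu$, so the estimate follows by interchanging the supremum with the $s$-integral.
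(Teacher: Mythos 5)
Your proof is correct and follows essentially the same strategy as the paper's: derive a pointwise-in-$t$ upper bound for $\mathcal{A}(Du^\delta(t))(\Omega)$ from the convexity/duality of the area functional, combine it with the a.e.\ lower-semicontinuity bound along a subsequence, and conclude $L^1(0,T)$-convergence of the scalar time-integrand. The only differences are cosmetic: the paper works with the Giusti-style supremum $\sup_\Phi\int_\Omega\sum_j v\,\partial_j\Phi_j+\Phi_{n+1}\,dx$ rather than your Legendre--Fenchel dual for measures, and the paper leaves the final Scheff\'e-type step (pointwise liminf plus integral limsup implies $L^1$-convergence) implicit, which you spell out.
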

\begin{proof}
It is shown in \cite[Lemma 2.6]{BDM2015} that $u^\delta\in L^1_{w^*}(0,T;\BV(\Omega))$. We already know $u^\delta \to u$ in $L^1(\Omega_T)$ as $\delta\to 0$. Thus we find a sequence $\delta_i$, with $\delta_i\to 0$ as i $\to\infty$ with $u^{\delta_i}(t)\to u(t)$ for a.e. $t$. Recall 
\begin{equation*}
    \mathcal{A}(Dv)(\Omega)=\sup\left\{ \int_\Omega \sum_{i=1}^{n} 
    v\partial_i\Phi_i +\Phi_{n+1}\dx; \Phi\in C^1(\Omega,\R^{n+1}), \|\Phi\|_\infty\leq 1 \right\}
\end{equation*}
for $v\in \BV(\Omega)$, cf. \cite[Def. 14.1]{Giusti1984}. The lower semicontinuity of the area functional gives
\begin{equation}
    \label{time mollification area lower smc}
    \mathcal{A}(Du(t))(\Omega)\leq\liminf_{i\to\infty} \mathcal A(Du^{\delta_i}(t))(\Omega). 
\end{equation}
for a.e. $t\in (0,T)$. On the other hand, for any $\Phi\in C^1(\Omega,\R^{n+1})$ with $ \|\Phi\|_\infty\leq 1$ we have
\begin{align*}
    & \int_\Omega \sum_{j=1}^{n} u^{\delta_i}(t)\partial_j\Phi_j +\Phi_{n+1}\dx \\
    & = \int_\Omega \sum_{j=1}^{n} \left(\int_0^t \delta_i^{-1} e^{\frac{s-t}{\delta_i}} u(s)\ds + e^{-\frac{t}{\delta_i }}w\right)\partial_j\Phi_j +\Phi_{n+1}\dx \\
    &  = \int_\Omega \sum_{j=1}^{n} \int_0^t \delta_i^{-1} e^{\frac{s-t}{\delta_i}} u(s)\partial_j\Phi_j +\Phi_{n+1}\ds\dx  \\
    & \quad +  e^{-\frac{t}{\delta_i}} \int_\Omega w \sum_{j=1}^{n}\partial_j\varphi_j\dx + e^{-\frac{t}{\delta_i}}\int_\Omega \Phi_{n+1}\dx \\
    & \leq \int_0^t \delta_i^{-1}e^\frac{s-t}{\delta_i} \mathcal A(Du(s))(\Omega)\ds + e^{-\frac{t}{\delta_i}}\lvert\Omega\rvert,
\end{align*}
and taking supremum over $\Phi$ we have 
\begin{equation*}
    \mathcal A(Du^{\delta_i}(t))(\Omega)\leq \int_0^t \delta_i^{-1}e^\frac{s-t}{\delta_i} \mathcal A(Du(s))(\Omega)\ds  + e^{-\frac{t}{\delta_i}}\lvert\Omega\rvert,
\end{equation*}
for a.e. $t\in (0,T)$. Integrating the above over $(0,T)$ shows that
\begin{align*}
    \limsup_{i\to\infty}\int_0^T \mathcal{A}(Du^{\delta_i})(\Omega)\dt & \leq \int_0^T \mathcal{A}(Du)(\Omega)\dt +\lim_{i\to\infty} \delta_i(1-e^{-\frac{T}{\delta_i}})(\|Dw\|(\Omega)+\lvert\Omega\rvert) \\
    & = \int_0^T \mathcal{A}(Du)(\Omega)\dt
\end{align*}
Together with \eqref{time mollification area lower smc} this implies \eqref{parabolic area strict conv def}.
\end{proof}

It is well known that the trace operator is continuous with respect to strict convergence. To conclude this section, we show the following lemma on the convergence of mollified trace terms.
\begin{lemma}
    \label{mollification of trace terms}
    Let $u\in L^1_{w^*}(0,T;\BV(\Omega))$. Then 
    \begin{equation}
        \label{conv of mollified trace}
        \lim_{\delta\to 0}\int_{0}^T\int_0^t\int_{\partial\Omega} \delta^{-1}e^{-\frac{s}{\delta}} \lvert Tu(t-s) - Tu(t)\rvert\, d \mathcal{H}^{n-1}\ds\dt=0.
    \end{equation}
    Moreover,
    \begin{equation}
        \label{conv of double mollified trace}
        \lim_{\delta\to 0}\int_{0}^T\int_0^t\int_{\partial\Omega} \delta^{-1}e^{-\frac{s}{\delta}} \lvert Tu^\delta(t-s) - Tu(t)\rvert\, d \mathcal{H}^{n-1}\ds\dt=0.
    \end{equation}
\end{lemma}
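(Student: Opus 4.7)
The plan is to reduce both statements to properties of Bochner $L^1$-translation continuity and $L^1$-convergence of time mollifications for $L^1(\partial\Omega)$-valued functions. As a preliminary step, I would establish that $t\mapsto Tu(t)$ belongs to $L^1(0,T;L^1(\partial\Omega))$ in the strongly measurable sense. Applying \Cref{lemma: strict approx in parabolic bv} yields $u_i\in L^1(0,T;W^{1,1}(\Omega))$ with $u_i\to u$ strictly in $\Omega_T$ and $u_i(t)\to u(t)$ strictly in $\Omega$ for a.e.\ $t$. Continuity of the trace operator under strict $\BV$ convergence gives $Tu_i(t)\to Tu(t)$ in $L^1(\partial\Omega)$ for a.e.\ $t$; combined with the trace bound $\|Tu_i(t)\|_{L^1(\partial\Omega)}\leq c\|u_i(t)\|_{\BV(\Omega)}$ and the $L^1(0,T)$-convergence of $t\mapsto\|u_i(t)\|_{\BV(\Omega)}$ (from the strict convergence), a Vitali-type argument gives $Tu_i\to Tu$ in the Bochner space $L^1(0,T;L^1(\partial\Omega))$, which in particular yields strong measurability of $Tu$.

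For \eqref{conv of mollified trace}, an application of Fubini rewrites the integral as $\int_0^T K_\delta(s)\,\omega(s)\ds$, where $K_\delta(s)=\delta^{-1}e^{-s/\delta}$ and
\begin{equation*}
    \omega(s)=\int_s^T\|Tu(t-s)-Tu(t)\|_{L^1(\partial\Omega)}\dt.
\end{equation*}
The standard Bochner $L^1$-continuity of translations yields $\omega(s)\to 0$ as $s\to 0^+$, while $\omega$ is uniformly bounded by $2\|Tu\|_{L^1(0,T;L^1(\partial\Omega))}$. Splitting $(0,T)=(0,\eta)\cup[\eta,T)$ and using that $\int_0^\infty K_\delta\ds=1$ and $\int_\eta^T K_\delta\ds\leq e^{-\eta/\delta}\to 0$ as $\delta\to 0$ for every fixed $\eta>0$ finishes the argument.

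For \eqref{conv of double mollified trace}, insert the triangle inequality
\begin{equation*}
    \lvert Tu^\delta(t-s)-Tu(t)\rvert\leq\lvert Tu^\delta(t-s)-Tu(t-s)\rvert+\lvert Tu(t-s)-Tu(t)\rvert.
\end{equation*}
The contribution of the second term is controlled by \eqref{conv of mollified trace}. For the first term, Fubini and $\int_0^T K_\delta(s)\ds\leq 1$ bound it by $\|Tu^\delta-Tu\|_{L^1(0,T;L^1(\partial\Omega))}$. Next, linearity of the trace operator together with Fubini shows
\begin{equation*}
    Tu^\delta(t)=e^{-t/\delta}Tw+\frac{1}{\delta}\int_0^t e^{(s-t)/\delta}Tu(s)\ds,
\end{equation*}
so $Tu^\delta$ is precisely the exponential time mollification of $Tu$, viewed as an $L^1(\partial\Omega)$-valued function, with initial datum $Tw$. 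The $L^1$-convergence of time mollifications (the Bochner version of property (i) of the time mollification lemma in the previous section) then gives $Tu^\delta\to Tu$ in $L^1(0,T;L^1(\partial\Omega))$ as $\delta\to 0$, completing the proof.

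The main obstacle is the preliminary step. Since $L^1_{w^*}(0,T;\BV(\Omega))$ is only weak*-measurable and the trace operator is not continuous with respect to $L^1$-convergence on $\BV(\Omega)$, strong measurability of $t\mapsto Tu(t)$ must be secured by the Sobolev-type approximation of \Cref{lemma: strict approx in parabolic bv}. Once this measurability, together with the explicit identification of $Tu^\delta$ as the mollification of $Tu$, is in place, both assertions reduce to classical facts about Bochner $L^1$ spaces.
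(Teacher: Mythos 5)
Your proposal takes a genuinely different route from the paper. The paper avoids any discussion of measurability of $t\mapsto Tu(t)$ or of commuting the trace with time integrals: it introduces a cutoff $\varphi\in C_0^\infty(\Omega)$, writes $Tu(s)-Tu(t)=T\bigl(w(s)-u(t)\bigr)$ with $w(s)=\varphi u(t)+(1-\varphi)u(s)$ (whose trace equals $Tu(s)$), and then uses the second part of \Cref{cut off trace lemma} to bound the boundary integral by an interior quantity,
\begin{equation*}
c\iint_{\Omega_T}(1-\varphi)\,d\|Du\|\dt+c\int_0^T\int_0^t\delta^{-1}e^{(s-t)/\delta}\int_\Omega|\nabla\varphi|\,|u(s)-u(t)|\dx\ds\dt,
\end{equation*}
then sends $\delta\to 0$ and finally lets $\varphi\uparrow 1$. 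This keeps the whole argument inside $\Omega$ and needs only the contraction/approximate-identity properties of the exponential kernel. Your strategy, by contrast, moves the entire problem to the Bochner space $L^1(0,T;L^1(\partial\Omega))$ and invokes classical facts there; this is conceptually cleaner once the transfer is in place, and correctly identifies that \Cref{lemma: strict approx in parabolic bv} plus a Pratt/Vitali argument is what secures strong measurability of $Tu$.

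The one step you should not dismiss as ``linearity plus Fubini'' is the identity $Tu^\delta=(Tu)^\delta$. Since $u$ is only weak$^*$-measurable into $\BV(\Omega)$, the integral $\frac1\delta\int_0^t e^{(s-t)/\delta}u(s)\ds$ defining $u^\delta(t)$ is a pointwise integral, not a Bochner integral in $\BV(\Omega)$; one therefore cannot simply pull the bounded trace operator through it. Trying instead to pass through the Sobolev approximations $u_i$ is also not immediate: $Tu_i^\delta(t)=(Tu_i)^\delta(t)$ is fine, but to identify the limit with $Tu^\delta(t)$ one would need $u_i^\delta(t)\to u^\delta(t)$ strictly in $\BV(\Omega)$ for a.e.\ $t$, and strict convergence of $u_i(s)\to u(s)$ for a.e.\ $s$ does not propagate through the time convolution in any obvious way (it gives at best an inequality for total variations, via lower semicontinuity). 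A workable justification is to use the pointwise boundary-layer characterization of the trace, $Tu^\delta(t)(x)=\lim_{\rho\to0}\dashint_{\Omega\cap B_\rho(x)}u^\delta(y,t)\,dy$, Fubini the spatial average through the $s$-integral, and dominate by a boundary maximal function controlled by $\|u(s)\|_{\BV(\Omega)}$; alternatively, test both candidate traces against $\psi\cdot\nu_\Omega$ for $\psi\in C^1(\overline\Omega,\R^n)$ via the Gauss--Green identity and argue that this family separates points in $L^1(\partial\Omega)$ for a Lipschitz domain. Either way, this is a real (if elementary) lemma that needs to be proved, not a one-line consequence of Fubini, and is exactly the kind of issue the paper's cutoff argument is designed to sidestep.
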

\begin{proof}
Let us prove \eqref{conv of mollified trace}, the proof of \eqref{conv of double mollified trace} being similar. Let $\varphi\in C_0^\infty(\Omega)$ with $0\leq \varphi\leq 1$. Given $s,t>0$,  denote 
\begin{equation*}
    w(s)=\varphi u(t)+(1-\varphi)u(s).    
\end{equation*}
Note that $Tw(s)=Tu(s)$ and $u(t)-w(s)=(1-\varphi)(u(t)-u(s))$. We thus have
    \begin{align*}
        & \int_{0}^T\int_0^t\int_{\partial\Omega} \delta^{-1}e^{-\frac{s}{\delta}} \lvert Tu(t-s) - Tu(t)\rvert\, d \mathcal{H}^{n-1}\ds\dt \\
        & = \int_{0}^T\int_0^t\int_{\partial\Omega} \delta^{-1}e^\frac{s-t}{\delta} \lvert Tw(s) - Tu(t)\rvert\, d \mathcal{H}^{n-1}\ds\dt \\
        & \leq c \int_{0}^T\int_0^t\delta^{-1}e^\frac{s-t}{\delta} \| D( w(s) - Tu(t))\|(\Omega)\ds\dt \\
        & \leq c \int_{0}^T\int_0^t\delta^{-1}e^\frac{s-t}{\delta}  \int_\Omega(1-\varphi)\,d (\|Du(s)\|+\|Du(t)\|) \\
        & \quad + \int_\Omega \lvert \nabla\varphi\rvert \lvert u(s)-u(t) \rvert\dx \ds\dt\\
        & \leq c \int_{0}^T\int_\Omega(1-\varphi)\,d\|Du(t)\|\dt + c \int_{0}^T\int_0^t\delta^{-1}e^\frac{s-t}{\delta}\int_\Omega \lvert \nabla\varphi\rvert \lvert u(s)-u(t) \rvert \dx\ds\dt,
    \end{align*}
    where in the last step we used the contraction property of the time mollification. Letting $\delta\to 0$ shows
    \begin{align*}
        & \limsup_{\delta\to 0}\int_{0}^T\int_0^t\int_{\partial\Omega} \delta^{-1}e^{-\frac{s}{\delta}} \lvert Tu(t-s) - Tu(t)\rvert\, d \mathcal{H}^{n-1}\ds\dt \\
        & \leq c \iint_{\Omega_T}(1-\varphi)\,d\|Du\|\dt+ c \lim_{\delta\to 0}c \int_{0}^T\int_0^t\delta^{-1}e^\frac{s-t}{\delta}\int_\Omega \lvert \nabla\varphi\rvert \lvert u(s)-u(t) \rvert \dx\ds\dt\\
        & = c \iint_{\Omega_T}(1-\varphi)\,d \|Du\|\dt,
    \end{align*}
    and since $\varphi$ was arbitrary we conclude 
    \begin{equation*}
        \lim_{\delta\to 0}\int_{0}^T\int_0^t\int_{\partial\Omega} \delta^{-1}e^{-\frac{s}{\delta}} \lvert Tu(t-s) - Tu(t)\rvert\, d \mathcal{H}^{n-1}\ds\dt=0.
    \end{equation*}
\end{proof}

\section{Comparison principle}
\label{section: comparison principle}
In this section we prove a comparison principle for variational solutions of \eqref{PDE}. For variational solutions to the total variation flow, this was shown under an assumption on the boundary data in \cite{BoegelDuzSchevTime:2016}. We will modify the technique to be able to use it for more general boundary data and for general functionals.

The following lemma is a version of \cite[Lemma 4.2]{BoegelDuzSchevTime:2016}, and will be necessary in the proof of the comparison principle.
\begin{lemma}\label{lemma for comp principle}
    Let $u$ be a variational solution of \eqref{PDE}. Let $w\in \BV(\Omega)\cap L^2(\Omega)$, $\delta>0$ and let $u^\delta$ be as in \eqref{time mollification}. Then
    \begin{equation*}
        \limsup_{\delta\to 0}\frac{1}{\delta} \iint_{\Omega_T}(u^\delta-u)^2\dx\dt \leq \frac{1}{2}\|w-u_0\|_{L^2(\Omega)}^2 
    \end{equation*}
\end{lemma}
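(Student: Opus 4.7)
The plan is to test the variational inequality \eqref{var inequality} with $v = u^\delta$ itself, which is admissible because, by the properties of the time mollification together with \Cref{lemma: time moll conv area strictly}, we have $u^\delta \in L^1_{w^*}(0,T;\BV(\Omega)) \cap C([0,T], L^2(\Omega))$ with $u^\delta(0) = w$ and $\partial_t u^\delta = \delta^{-1}(u - u^\delta) \in L^2(\Omega_T)$. The key identity $\partial_t u^\delta \cdot (u^\delta - u) = -\frac{1}{\delta}(u^\delta - u)^2$ converts the time-derivative term in \eqref{var inequality} into exactly minus the quantity we wish to bound. Rearranging at any $\tau \in (0,T)$ for which \eqref{var inequality} holds yields
\begin{equation*}
\tfrac{1}{\delta}\iint_{\Omega_\tau}(u^\delta - u)^2 \dx\dt + \tfrac{1}{2}\lVert(u^\delta - u)(\tau)\rVert_{L^2(\Omega)}^2 \leq F(u^\delta;\tau) - F(u;\tau) + \tfrac{1}{2}\lVert w - u_0\rVert_{L^2(\Omega)}^2,
\end{equation*}
where $F(v;\tau) := \int_0^\tau f(Dv)(\Omega)\dt + \iint_{\partial\Omega \times (0,\tau)} \lvert Tg - Tv\rvert f^\infty(\cdot, \nu_\Omega) \dH\dt$.

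The main work is then to show $F(u^{\delta_i};\tau) \to F(u;\tau)$ along a subsequence for which \Cref{lemma: time moll conv area strictly} provides area-strict convergence $u^{\delta_i} \to u$ in $\Omega_T$. For the bulk part, I would extract a further subsequence such that $u^{\delta_i}(t) \to u(t)$ in $L^1(\Omega)$ and $\mathcal{A}(Du^{\delta_i}(t))(\Omega) \to \mathcal{A}(Du(t))(\Omega)$ for a.e. $t$, apply \Cref{reshetnyak continuity} pointwise in $t$ to obtain $f(Du^{\delta_i}(t))(\Omega) \to f(Du(t))(\Omega)$, and use the linear growth bound together with the $L^1(0,T)$ convergence of the area functional to run a Vitali-type argument that transports the pointwise convergence to the time integral. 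For the boundary part, I would decompose
\begin{equation*}
Tu^\delta(t) - Tu(t) = e^{-t/\delta}(Tw - Tu(t)) + \int_0^t \delta^{-1}e^{-r/\delta}(Tu(t-r) - Tu(t))\, dr,
\end{equation*}
combine \Cref{mollification of trace terms} (for the convolution term) with dominated convergence (for the exponentially small leading term) to conclude $Tu^\delta \to Tu$ in $L^1(\partial\Omega \times (0,T))$, and then apply the reverse triangle inequality $\lvert \lvert Tg - Tu^\delta\rvert - \lvert Tg - Tu\rvert\rvert \leq \lvert Tu^\delta - Tu\rvert$ together with the bound $f^\infty(\cdot, \nu_\Omega) \leq \Lambda$.

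Finally, after dropping the nonnegative $\lVert(u^\delta - u)(\tau)\rVert^2$ term, both sides of the resulting inequality depend continuously on $\tau$ (the left-hand side by monotone convergence in $\tau$, and $F(\cdot;\tau)$ is absolutely continuous in $\tau$), so the estimate extends from a.e.\ $\tau \in (0,T)$ to $\tau = T$. Taking $\limsup$ along the subsequence then yields the claim for that subsequence; to upgrade to $\limsup_{\delta \to 0}$ over all $\delta$, I note that every sequence $\delta_j \to 0$ admits a subsequence for which \Cref{lemma: time moll conv area strictly} applies, so the bound transfers from every subsequence to the full sequence. I expect the main obstacle to be the convergence $F(u^{\delta_i};T) \to F(u;T)$: area-strict convergence is only supplied in the integrated-in-time form, and extracting the subsequences needed to invoke \Cref{reshetnyak continuity} pointwise in $t$ while simultaneously securing uniform integrability in time requires some care.
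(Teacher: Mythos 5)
Your proposal is correct and follows the same route the paper takes: the paper's own proof of this lemma is a one-liner ("Test \eqref{var inequality} with $u^\delta$, use \eqref{derivative of time mollification} and let $\delta\to 0$"), and you have correctly identified and filled in exactly the details that step entails — admissibility of $u^\delta$ as comparison map (using that $w\in\BV(\Omega)\cap L^2(\Omega)$ so that $u^\delta\in L^1_{w^*}(0,T;\BV(\Omega))\cap C([0,T],L^2(\Omega))$ with $u^\delta(0)=w$ and $\partial_t u^\delta\in L^2(\Omega_T)$), the cancellation via $\partial_t u^\delta(u^\delta-u)=-\delta^{-1}(u^\delta-u)^2$, convergence of the bulk energy by \Cref{lemma: time moll conv area strictly} plus \Cref{reshetnyak continuity}, convergence of the boundary term via \Cref{mollification of trace terms}, and the subsequence-to-full-limit upgrade. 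One small simplification you could note is that only the one-sided bound $\limsup_{\delta\to 0}F(u^\delta;T)\leq F(u;T)$ is actually needed (the paper implicitly discards the nonnegative $\|(u^\delta-u)(\tau)\|_{L^2}^2$ term, and the energy difference appears with a favourable sign), so full continuity, while true, is more than required.
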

\begin{proof}
    Test \eqref{var inequality} with $u^\delta$, use \eqref{derivative of time mollification} and let $\delta \to 0$.
\end{proof}
The next result is our comparison principle. The full statement is in terms of comparisons of extensions of the boundary values. However, we will only apply the result to conclude uniqueness for fixed boundary values.
\begin{lemma}\label{comparison principle}
    Let $u_0,v_0\in L^2(\Omega)$, $g,h\in L^1_{w^*}(0,T;\BV(\Omega))$ such that $g$ and $h$ can be extended to $\R^n\times(0,T)$ with $g\leq h$ a.e. in $(\R^n\backslash \Omega)\times(0,T)$. Suppose $u$ and $v$ are variational solutions of \eqref{PDE} for the initial and boundary data $(u_0,g)$ and $(v_0,h)$, respectively. Then
    \begin{equation*}
        \| (u-v)_+(\tau)\|_{L^2(\Omega}^2\leq \| (u_0-v_0)_+\|_{L^2(\Omega)}^2
    \end{equation*}
\end{lemma}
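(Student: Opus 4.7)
The plan is to test the variational inequalities for $u$ and $v$ with the minimum and maximum of their time mollifications, sum the inequalities, and pass to the limit as the mollification parameter vanishes. Concretely, fix $\tilde u_0,\tilde v_0\in\BV(\Omega)\cap L^2(\Omega)$ with $\tilde u_0\to u_0$, $\tilde v_0\to v_0$ in $L^2(\Omega)$, and define $u^\delta,v^\delta$ by \eqref{time mollification} using these initial values. Both belong to $L^1_{w^*}(0,T;\BV(\Omega))\cap C([0,T];L^2(\Omega))$ with $L^2$ time derivatives, so $\phi_1:=\min(u^\delta,v^\delta)$ and $\phi_2:=\max(u^\delta,v^\delta)$ are admissible test functions in the respective variational inequalities. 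I test the inequality for $u$ with $\phi_1$ and the one for $v$ with $\phi_2$, then sum.

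Two integrand-level estimates reduce the right-hand side of the sum to a quantity involving only $u^\delta,v^\delta$:
\begin{enumerate}[(i)]
\item $f(D\phi_1)(\Omega)+f(D\phi_2)(\Omega)\le f(Du^\delta)(\Omega)+f(Dv^\delta)(\Omega)$ for a.e.\ $t$;
\item $|Tg-T\phi_1|+|Th-T\phi_2|\le|Tg-Tu^\delta|+|Th-Tv^\delta|$ a.e.\ on $\partial\Omega$.
\end{enumerate}
Assertion (ii) follows from a case analysis on the orderings of $(Tu^\delta,Tv^\delta,Tg,Th)$, using $Tg\le Th$ a.e.\ on $\partial\Omega$, which is a consequence of the hypothesis $g\le h$ in $\R^n\setminus\overline\Omega$ via \Cref{lemma: derivative of piecewise def function}. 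After applying (i)--(ii), sending $\delta\to 0$ uses \Cref{lemma: time moll conv area strictly} (area-strict convergence of the mollifications) together with \Cref{reshetnyak continuity}: these give $\int_0^\tau f(Du^\delta)(\Omega)\dt\to\int_0^\tau f(Du)(\Omega)\dt$, analogously for $v$, while the boundary terms converge by strict continuity of the trace. Hence the bulk and boundary terms on the two sides cancel in the limit.

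What remains are the time-derivative, initial, and final $L^2$-norm terms. Using $\{\phi_1,\phi_2\}=\{u^\delta,v^\delta\}$ pointwise and $\partial_t u^\delta=(u-u^\delta)/\delta$, a direct computation gives
\[
\iint_{\Omega_\tau}\bigl[\partial_t\phi_1(\phi_1-u)+\partial_t\phi_2(\phi_2-v)\bigr]=-\tfrac{1}{\delta}\iint_{\Omega_\tau}\bigl[(u-u^\delta)^2+(v-v^\delta)^2\bigr]+\iint_{\Omega_\tau}(u-v)\,\partial_t(u^\delta-v^\delta)_+,
\]
together with the pointwise identity $(\phi_1-u)^2+(\phi_2-v)^2=(u^\delta-u)^2+(v^\delta-v)^2+2(u^\delta-v^\delta)_+(u-v)$. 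Setting $w=u-v$ and $w^\delta=u^\delta-v^\delta$, the chain rule $\partial_t(w^\delta)_+=1_{\{w^\delta>0\}}\partial_t w^\delta$ and the splitting $w=(w-w^\delta)+w^\delta$ rewrite $\iint(u-v)\partial_t(u^\delta-v^\delta)_+$ as $\tfrac{1}{2}\|(w^\delta)_+(\tau)\|^2_{L^2}-\tfrac{1}{2}\|(\tilde u_0-\tilde v_0)_+\|^2_{L^2}$ plus a nonnegative remainder bounded by $\delta^{-1}\iint(w-w^\delta)^2\le 2\delta^{-1}\iint[(u-u^\delta)^2+(v-v^\delta)^2]$. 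Applying \Cref{lemma for comp principle} to $u$ and to $v$ controls the $\delta^{-1}$ terms, and along a subsequence $u^\delta(\tau)\to u(\tau)$, $v^\delta(\tau)\to v(\tau)$ in $L^2(\Omega)$ for a.e.\ $\tau$. Combining all estimates yields an inequality of the form
\[
\tfrac{1}{2}\|(u-v)_+(\tau)\|^2_{L^2}\le C\bigl(\|\tilde u_0-u_0\|^2_{L^2}+\|\tilde v_0-v_0\|^2_{L^2}\bigr)+\int_\Omega(\tilde u_0-\tilde v_0)_+(u_0-v_0)\dx-\tfrac{1}{2}\|(\tilde u_0-\tilde v_0)_+\|^2_{L^2},
\]
and letting $\tilde u_0\to u_0$, $\tilde v_0\to v_0$ in $L^2(\Omega)$ reduces the right-hand side to $\tfrac{1}{2}\|(u_0-v_0)_+\|^2_{L^2}$, giving the claim.

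The main obstacle I anticipate is the verification of (i) at the level of measures. On the absolutely continuous part Stampacchia's theorem gives the pointwise identity $f(\nabla\phi_1)+f(\nabla\phi_2)=f(\nabla u^\delta)+f(\nabla v^\delta)$ a.e., but for the singular parts of $D\phi_1,D\phi_2$ one has to argue that the Cantor and jump contributions combine to match those of $Du^\delta,Dv^\delta$ with compatible polar vectors, making essential use of the symmetry and positive $1$-homogeneity of $f^\infty$. If such a direct argument proves delicate, one can instead apply the Sobolev strict approximation of \Cref{lemma: strict approx in parabolic bv} to reduce to the Sobolev case where (i) is an equality, and then pass to the limit via the lower semicontinuity on the left and \Cref{reshetnyak continuity} on the right.
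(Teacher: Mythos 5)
Your approach is essentially the same as the paper's: test the two variational inequalities with $\min(u^\delta,v^\delta)$ and $\max(u^\delta,v^\delta)$, sum them, use a min/max estimate to cancel the functional terms against one another in the limit, and reduce to a pure $L^2$ calculation controlled by \Cref{lemma for comp principle}. The $\delta^{-1}$ bookkeeping and the algebra involving $(\tilde u_0-\tilde v_0)_+$ match the paper's computation.

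The only real difference is organizational. The paper obtains both your (i) and your (ii) in a single step: it extends $u^\delta(t)$ by $g(t)$ and $v^\delta(t)$ by $h(t)$ to $\R^n$, applies the global estimate
\[
f(D\min(w_1,w_2))+f(D\max(w_1,w_2))\le f(Dw_1)+f(Dw_2),\qquad w_1,w_2\in\BV(\R^n),
\]
(this is \eqref{min max estimate for general functional}, proved via the coarea formula for $f^\infty$ and locality of the approximate gradient), and then decomposes the result via \Cref{lemma: derivative of piecewise def function}. The hypothesis $g\le h$ on $\R^n\setminus\Omega$ is used precisely so that $\min$ and $\max$ of the extensions agree with $g$ and $h$ outside $\Omega$, so the exterior contributions cancel and the jump terms on $\partial\Omega$ reorganize correctly. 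Your route separates a bulk inequality on $\Omega$ from a pointwise trace inequality on $\partial\Omega$; the former is the restriction to $\Omega$ of \eqref{min max estimate for general functional}, and the latter is a rearrangement inequality $|a-c|+|b-d|\le|a-d|+|b-c|$ for $a\le b$, $c<d$. This is fine, and arguably clearer to check.

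Two small caveats. First, your worry about establishing (i) at the level of singular parts is answered exactly by \eqref{min max estimate for general functional}; the fallback you suggest (Sobolev approximation plus Reshetnyak on the right) is problematic because $\min$ and $\max$ of area-strictly convergent sequences need not converge area-strictly, so Reshetnyak continuity is not directly available — stick with the coarea/locality argument. Second, the justification for $Tg\le Th$ $\mathcal H^{n-1}$-a.e.\ on $\partial\Omega$ should be phrased more carefully: it comes from $\hat g\le\hat h$ on $\R^n\setminus\Omega$ via the outer trace $T_{\R^n\setminus\overline\Omega}$, which is the trace that appears when one computes the jump at $\partial\Omega$ through \Cref{lemma: derivative of piecewise def function}. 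This is essentially the same device the paper uses implicitly, but citing Lemma 2.1 alone as giving the inequality between traces is a little terse; a sentence on why the outer trace of the extensions is the relevant one would close the gap.
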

\begin{proof}
Let $\epsilon>0$, and let $u_{0,\epsilon}$, $v_{0,\epsilon}$ be standard mollifications of $u_0$ and $v_0$ respectively. For $\delta>0$, let $u^\delta,v^\delta$ be as in \eqref{time mollification} with $w$ replaced by $u_{0,\epsilon},v_{0,\epsilon}$, respectively. Testing $u$ with $\min(u^\delta,v^\delta)=u^\delta-(u^\delta-v^\delta)_+$ gives 
\begin{equation}
\label{comparison principle estimate 1}
\begin{split}
    & \int_0^\tau f(Du)(\Omega)\dt + \iint_{\partial\Omega\times(0,\tau)}\lvert Tg-Tu\rvert f^\infty(\cdot, \nu_\Omega)\dH\dt\\
    & \leq  \int_0^\tau  f(D\min(u^\delta,v^\delta))(\Omega)\dt + \iint_{\partial\Omega\times(0,\tau)}\lvert Tg-T \min(u^\delta,v^\delta)\rvert f^\infty(\cdot, \nu_\Omega)\dH\dt \\ 
    & \quad +\iint_{\Omega_\tau} \partial_t \min(u^\delta,v^\delta) (\min(u^\delta,v^\delta)-u)\dx\dt \\
    & \quad +\frac{1}{2}\|u_0-u_{0,\epsilon}+(u_{0,\epsilon}-v_{0,\epsilon})_+\|^2_{L^2(\Omega)} - \frac{1}{2}\|(\min(u^\delta,v^\delta)-u)(\tau)\|^2_{L^2(\Omega)}.
\end{split}
\end{equation}
Similarly, testing $v$ with $\max(u^\delta,v^\delta)=v^\delta+(u^\delta-v^\delta)_+$ gives
\begin{equation}
\label{comparison principle estimate 2}
\begin{split}
    & \int_0^\tau f(Dv)(\Omega)\dt + \iint_{\partial\Omega\times(0,\tau)}\lvert Th-Tu\rvert f^\infty(\cdot, \nu_\Omega)\dH\dt\\
    & \leq  \int_0^\tau  f(D\max(u^\delta,v^\delta))(\Omega)\dt + \iint_{\partial\Omega\times(0,\tau)}\lvert Th-T \max(u^\delta,v^\delta)\rvert f^\infty(\cdot, \nu_\Omega)\dH\dt \\ 
    & \quad +\iint_{\Omega_\tau} \partial_t \max(u^\delta,v^\delta) (\max(u^\delta,v^\delta)-u)\dx\dt \\
    & \quad +\frac{1}{2}\|v_0-v_{0,\epsilon} - (u_{0,\epsilon}-v_{0,\epsilon})_+\|^2_{L^2(\Omega)} - \frac{1}{2}\|(\max(u^\delta,v^\delta)-v)(\tau)\|^2_{L^2(\Omega)}. 
\end{split}
\end{equation}
We want to add together \eqref{comparison principle estimate 1} and \eqref{comparison principle estimate 2}. Let us first simplify the resulting terms. 

Let $u_1,u_2\in \BV(\R^n)$. Using the coarea formula for $f^\infty$, see for instance \cite[Remark 4.4]{AmarBellettini}, and the locality of the approximate gradient we have 
\begin{equation}
    \label{min max estimate for general functional}
    f(D\min(u_1,u_2))+f(D\max(u_1,u_2)) \leq f(Du_1)+f(Du_2).
\end{equation}
Hence, by the assumption on $g$ and $h$ we find that
\begin{equation}
\label{comparison principle estimate 3}
\begin{split}
    &f(D\min(u^\delta,v^\delta)(t))(\Omega)+f(D\max(u^\delta,v^\delta)(t))(\Omega) \\
    & \quad + \int_{\partial\Omega}\left( \lvert Tg(t)-T \min(u^\delta,v^\delta)(t)\rvert + \lvert Th(t)-T \max(u^\delta,v^\delta)(t)\rvert \right) f^\infty(\cdot, \nu_\Omega)\dH \\
    & \leq f(Du^\delta)(\Omega)+f(Dv^\delta)(\Omega) \\
    & \quad + \int_{\partial\Omega}\left( \lvert Tg(t)-T u^\delta(t)\rvert+\lvert Th(t)-T v^\delta\rvert \right) f^\infty(\cdot, \nu_\Omega)\dH, 
\end{split}
\end{equation}
for a.e. $t\in (0,T)$. For the time derivatives we have
\begin{align*}
    & \partial_t \min(u^\delta, v^\delta) (\min(u^\delta,v^\delta)-u)+ \partial_t \max(u^\delta,v^\delta)(\max(u^\delta,v^\delta)-v) \\
    & = \partial_t (u^\delta-v^\delta)_+ (u^\delta-v^\delta)_+ +\partial_t v^\delta(v^\delta-v)+\partial_t u^\delta(u^\delta-u) \\
    & \quad\quad + \partial_t (u^\delta-v^\delta)_+ (v^\delta-u^\delta+u-v) \\
    & \leq \frac{1}{2} \partial_t  (u^\delta-v^\delta)_+^2 + \partial_t (u^\delta-v^\delta)_+ (v^\delta-u^\delta+u-v),
\end{align*}
where in the second step we used \eqref{derivative of time mollification}. Adding \eqref{comparison principle estimate 1} and \eqref{comparison principle estimate 3} we arrive at
\begin{equation}
\label{comparison principle estimate 4}
\begin{split}
    & \int_0^\tau f(Du)(\Omega)+f(Dv)(\Omega)\dt \\
    & \quad + \iint_{\partial\Omega\times(0,\tau)}\left(\lvert Tg-Tu\rvert+\lvert Th-Tu\rvert \right) f^\infty(\cdot, \nu_\Omega)\dH\dt\\
    & \leq  \int_0^\tau  f(Du^\delta)(\Omega)+f(Dv^\delta)(\Omega)\dt \\ 
    & \quad + \iint_{\partial\Omega\times(0,\tau)}\left(\lvert Tg-T u^\delta\rvert +\lvert Th-T v^\delta\rvert \right) f^\infty(\cdot, \nu_\Omega)\dH\dt \\ 
    & \quad + \iint_{\Omega_\tau}\partial_t (u^\delta-v^\delta)_+ (v^\delta-u^\delta+u-v)\dx\dt \\
    & \quad + \frac{1}{2}\|(u-v)_+ (\tau)\|_{L^2(\Omega)}^2-\frac{1}{2}\|(u_{0,\epsilon}-v_{0,\epsilon})_+\|_{L^2(\Omega)}^2\\
    & \quad +\frac{1}{2}\|u_0-u_{0,\epsilon}+(u_{0,\epsilon}-v_{0,\epsilon})_+\|^2_{L^2(\Omega)} - \frac{1}{2}\|(\min(u^\delta,v^\delta)-u)(\tau)\|^2_{L^2(\Omega)} \\
    & \quad + \frac{1}{2}\|v_0-v_{0,\epsilon}-(u_{0,\epsilon}-v_{0,\epsilon})_+\|^2_{L^2(\Omega)} - \frac{1}{2}\|(\max(u^\delta,v^\delta)-v)(\tau)\|^2_{L^2(\Omega)}.
\end{split}
\end{equation}

For the remaining term containing the time derivative we see that
\begin{align*}
    & \limsup_{\delta\to 0} \iint_{\Omega_\tau}\partial_t (u^\delta-v^\delta)_+ (v^\delta-u^\delta+u-v)\dx\dt \\
    & \leq  \limsup_{\delta\to 0} \iint_{\Omega_\tau}\frac{1}{\delta}(v^\delta-u^\delta+u-v)^2\dx\dt \\
    & \leq  \limsup_{\delta\to 0} \frac{2}{\delta} \iint_{\Omega_\tau} (u^\delta-u)^2 +(v^\delta-v)^2 \dx\dt \\
    & \leq \|u_{0,\epsilon}-u_0\|_{L^2(\Omega)}^2 + \|v_{0,\epsilon}-v_0\|_{L^2(\Omega)}^2,
\end{align*}
where in the first step we used \eqref{derivative of time mollification} and in the final step we applied \Cref{lemma for comp principle}. Combining \Cref{lemma: time moll conv area strictly} and \Cref{reshetnyak continuity}, we find, after choosing some sequence $\delta_i>0$, $i\in\mathbb N$, with $\delta_i\to 0$ as $i\to\infty$, that $f(Du^\delta)(\Omega)\to f(Du)(\Omega)$ as $\delta\to 0$. Hence, letting $\delta\to 0$ in \eqref{comparison principle estimate 4} we absorb terms and arrive at
\begin{align*}
     \| (u-v)_+\|_{L^2(\Omega)}^2 &  \leq -\frac{1}{2}\|(u_{0,\epsilon}-v_{0,\epsilon})_+\|_{L^2(\Omega)}^2 +\frac{1}{2}\|u_0-u_{0,\epsilon}+(u_{0,\epsilon}-v_{0,\epsilon})_+\|^2_{L^2(\Omega)}  \\
    & \quad + \frac{1}{2}\|v_0-v_{0,\epsilon}-(u_{0,\epsilon}-v_{0,\epsilon})_+\|^2_{L^2(\Omega)}.
\end{align*}
Finally, letting $\epsilon\to 0$ completes the proof.
\end{proof}

\section{Weak solutions}
\label{section: weak solutions}
In this section we begin to study the equivalence of weak and variational solutions. The first result establishes the equivalence of our definition of weak solution with the definition based on the Anzellotti pairing, when the pairing is applicable. Note that if $f^*(x,z(x,t))<\infty$ then $z(x,t)\cdot \xi \leq f^\infty(x,\xi)$.
\begin{proposition}
    \label{equivalences for boundary condition}
    Let $u,g \in L^\infty(0,T;L^2(\Omega))$, with $\partial_t u, \partial_t g \in L^1(0,T;L^2(\Omega))$. Furthermore, let $z\in L^\infty(\Omega_T,\R^n)$ with $z(x,t)\cdot \xi \leq f^\infty(x,\xi)$ for all $((x,t),\xi)\in \overline{\Omega}_T\times\R^n$. Then the following are equivalent:
    \begin{enumerate}[(i)]
        \item The condition \eqref{divergence condition} holds for $u,g,z$.
        \item 
        It holds that
        \begin{align*}
             & \iint_{\Omega_T}z\cdot \nabla \varphi\dx\dt-\iint_{\Omega_T} u \partial_t \varphi \dx\dt \\
             & = \iint_{\partial\Omega\times(0,T)} \sign_0(Tg-Tu) T\varphi f^\infty(\cdot,\nu_\Omega)\dH\dt, 
        \end{align*}
        for every $\varphi\in L^1(0,T;W^{1,1}(\Omega))$, compactly supported in time with $\partial_t w\in L^1(0,T;L^2(\Omega))$, and such that for a.e. $t\in (0,T)$ and $\mathcal H^{n-1}$ a.e. $x\in\partial\Omega$, $Tg(x)=Tu(x)$ implies $T\varphi(x)=0$.
        \item $\div z = \partial_t u $ in the sense of distributions and 
        \begin{equation*}
            [z(t),\nu_\Omega]\in\sign(Tg(t)-Tu(t))f^\infty(\cdot, \nu_\Omega) \text{ $\mathcal H^{n-1}$-a.e.},  
        \end{equation*}
        for a.e. $t\in (0,T)$.
    \end{enumerate}
\end{proposition}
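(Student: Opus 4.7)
The plan is to pass through condition (iii) as the central one, proving separately that (i) $\Leftrightarrow$ (iii) and (ii) $\Leftrightarrow$ (iii). The main analytic tool is a parabolic Gauss-Green identity which, under the a priori assumption $\partial_t u \in L^1(0,T;L^2(\Omega))$, combines the spatial Anzellotti formula \eqref{eq: Gauss Green in W11} with integration by parts in time. Concretely, once one knows $\div z = \partial_t u$ distributionally in $\Omega_T$, then $z(\cdot,t)\in X_2(\Omega)$ for a.e.\ $t$, the weak normal trace $[z(t),\nu_\Omega]$ exists, and
\[
\iint_{\Omega_T} z\cdot\nabla\varphi\dx\dt - \iint_{\Omega_T} u\,\partial_t\varphi\dx\dt = \iint_{\partial\Omega\times(0,T)} [z,\nu_\Omega]\,T\varphi\dH\dt,
\]
for every $\varphi\in L^1(0,T;W^{1,1}(\Omega))$ compactly supported in time with $\partial_t\varphi\in L^1(0,T;L^2(\Omega))$. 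Here $\varphi$ is automatically in $L^\infty(0,T;L^2(\Omega))$ by virtue of its compact time support and of $\partial_t\varphi\in L^1(0,T;L^2)$, so the spatial Gauss-Green applies pointwise in $t$; the temporal integration by parts is legal since $u$ is absolutely continuous as an $L^2$-valued curve.

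For (i) $\Leftrightarrow$ (iii), I will first extract $\div z = \partial_t u$ from \eqref{divergence condition} by testing with $\varphi\in C_0^\infty(\Omega_T)$, where the boundary term vanishes identically, and varying the sign of $\varphi$. Substituting the Gauss-Green identity into \eqref{divergence condition} then reduces (i) to the inequality
\[
\iint_{\partial\Omega\times(0,T)}[z,\nu_\Omega]T\varphi\dH\dt \leq \iint_{\partial\Omega\times(0,T)}\bigl(|T\varphi+Tg-Tu|-|Tg-Tu|\bigr)f^\infty(\cdot,\nu_\Omega)\dH\dt,
\]
for all admissible $\varphi$. Since the trace operator $W^{1,1}(\Omega)\to L^1(\partial\Omega)$ is surjective, this forces $[z,\nu_\Omega]$ to lie in the subdifferential at $0$ of $a\mapsto|a+Tg-Tu|f^\infty(\cdot,\nu_\Omega)$ pointwise $\mathcal H^{n-1}$-a.e., which on $\{Tg\neq Tu\}$ is the singleton $\{\sign(Tg-Tu)f^\infty(\cdot,\nu_\Omega)\}$ and thus yields (iii). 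The converse (iii) $\Rightarrow$ (i) follows by inserting the sign identity into the Gauss-Green identity and using the pointwise convexity estimate $\sign(b)a\leq|a+b|-|b|$ on $\{Tg\neq Tu\}$, together with the bound $|[z,\nu_\Omega]|\leq f^\infty(\cdot,\nu_\Omega)$, which in turn follows from the assumption $z\cdot\xi\leq f^\infty(\cdot,\xi)$ combined with the symmetry of $f^\infty$.

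For (ii) $\Leftrightarrow$ (iii), testing (ii) with $\varphi\in C_0^\infty(\Omega_T)$ again yields $\div z=\partial_t u$ as distributions. The Gauss-Green identity then converts (ii) into
\[
\iint_{\partial\Omega\times(0,T)}[z,\nu_\Omega]T\varphi\dH\dt = \iint_{\partial\Omega\times(0,T)}\sign(Tg-Tu)f^\infty(\cdot,\nu_\Omega)T\varphi\dH\dt,
\]
for all admissible $\varphi$ with $T\varphi=0$ on $\{Tg=Tu\}$. By the extension theory for $W^{1,1}$-traces, such test functions exhaust a dense subset of integrands supported on $\{Tg\neq Tu\}$, whence the pointwise identity of (iii) on that set. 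The reverse direction is immediate from the Gauss-Green identity and the assumed trace compatibility.

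The main obstacle will be the careful treatment of the set $\{Tg=Tu\}$: both in verifying that the admissible class of test functions in (ii) is rich enough to identify $[z,\nu_\Omega]$ on its complement, and in giving a consistent meaning to ``$\sign(0)$'' in (iii). The pointwise bound $|[z,\nu_\Omega]|\leq f^\infty(\cdot,\nu_\Omega)$ makes the convention immaterial, since any ambiguity disappears after pairing with the admissible traces $T\varphi$. A secondary technicality is the rigorous passage from the spatial Gauss-Green formula at a.e.\ $t$ to the time-integrated identity, which is handled by Fubini together with the absolute continuity of the $L^2$-valued curve $t\mapsto u(t)$.
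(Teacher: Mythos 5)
Your proof is correct, but it takes a genuinely different route from the paper's. The paper makes condition (ii) the hub, proving $(i)\Leftrightarrow(ii)$ without Gauss--Green (a divide-by-$h$/linearity argument for $(i)\Rightarrow(ii)$; a boundary cut-off argument using \Cref{cut off trace lemma} and Reshetnyak continuity \Cref{reshetnyak continuity} for $(ii)\Rightarrow(i)$), and only then establishes $(ii)\Leftrightarrow(iii)$ via Gauss--Green. You instead make $(iii)$ the hub and invoke the parabolic Gauss--Green identity from the start, so that $(i)\Leftrightarrow(iii)$ reduces to a subdifferential characterization of $[z,\nu_\Omega]$ at $0$, and $(ii)\Leftrightarrow(iii)$ is a direct boundary-integral comparison. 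Your route is shorter and avoids the heavier Reshetnyak/trace-cut-off machinery that the paper employs for $(ii)\Rightarrow(i)$; the price is that every leg depends on the Anzellotti trace, and so the arguments are intrinsically tied to the standing assumption $\partial_t u\in L^1(0,T;L^2(\Omega))$. The paper's $(i)\Leftrightarrow(ii)$ is deliberately Gauss--Green-free because those same techniques are reused later in the paper in the general (time-irregular) setting, where the Anzellotti pairing is unavailable. Both proofs implicitly read the sign identity in (iii) as prescribing $[z(t),\nu_\Omega]$ only on $\{Tg(t)\ne Tu(t)\}$, with the a priori bound $\lvert[z(t),\nu_\Omega]\rvert\le f^\infty(\cdot,\nu_\Omega)$ doing the work on $\{Tg=Tu\}$; you flag this explicitly, which is good, though the justification that the weak normal trace inherits the pointwise bound $z\cdot\xi\le f^\infty(x,\xi)$ deserves a line (it follows by mollifying $z$ in a way that preserves the constraint and using weak$^*$ stability of $\gamma$). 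One small technical point to tidy in the $(i)\Rightarrow(iii)$ step: after localizing in time with $\zeta\in C_0^\infty(0,T)$ and a fixed spatial function $\psi$, the admissibility of $\psi\zeta$ in \eqref{divergence condition} requires $\psi\in L^2(\Omega)$; this is harmless since traces of $W^{1,1}(\Omega)\cap L^\infty(\Omega)$ (e.g.\ of $C^\infty(\overline\Omega)$) are still dense in $L^1(\partial\Omega)$, but it should be stated.
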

\begin{proof}
Let us first see that $(i)\implies (ii)$. Let $\varphi$ satisfy the assumptions in $(ii)$. Let $h>0$ and apply $h\varphi$ as a test function in \eqref{divergence condition}. Dividing by $h$ and letting $h\to 0$ shows
\begin{align*}    
    &\iint_{\Omega_T} z\cdot \nabla\varphi - u\partial_t \varphi \dx\dt \\
    &\leq \lim_{h\to 0} \iint_{\partial\Omega\times(0,T)} \left(\lvert T\varphi+Tg-Tu\rvert - \lvert Tg-Tu\rvert \right)/h f^\infty(\cdot, \nu_\Omega) \, d \mathcal{H}^{n-1}\dt \\
    & = \iint_{\partial\Omega\times(0,T)} \sign_0(Tg-Tu) T\varphi f^\infty(\cdot, \nu_\Omega) \, d \mathcal{H}^{n-1}\dt.
\end{align*}
Here, the assumption on the trace is necessary to allow us to pass to the limit. Applying the above inequality for $-\varphi$ and arguing by linearity shows $(ii)$.

Suppose then that $(ii)$ holds, and let us show that $(ii)\implies(i)$. Using \Cref{lemma: strict approx in parabolic bv}, we find $\Tilde u,\Tilde g\in L^1(0,T;W^{1,1}(\Omega))$ with $T\Tilde u= Tu$ and $T\Tilde g=Tg$, and $\partial_t \Tilde u, \partial_t \Tilde g\in L^1(0,T;L^2(\Omega))$. Let $\epsilon>0$ and denote
\begin{equation*}
    \Omega_\epsilon=\{x\in\Omega: \dist(x,\partial\Omega)>\epsilon\}.
\end{equation*}
Furthermore, let
\begin{equation*}
\eta_\epsilon(x)=\frac{\dist(x,\R^n\backslash \Omega)}{\dist(x,\R^n\backslash\Omega)+\dist(x,\Omega_\epsilon)},
\end{equation*} 
and note that $0\leq \eta_\epsilon \leq 1$, $\eta_\epsilon=0$ on $\R^n\backslash\Omega$, $\eta_\epsilon=1$ on $\Omega_\epsilon$. Let $\varphi$ be as in \eqref{divergence condition} and denote $\varphi_\epsilon=\eta_\epsilon \varphi + (1-\eta_\epsilon)(\Tilde u-\Tilde g)$. Testing $(ii)$ with $\varphi_\epsilon$ shows
\begin{equation}
\label{estimate for div cond from sign trace}
\begin{split}
    & \iint_{\Omega_T} \eta_\epsilon z\cdot \nabla \varphi +(1-\eta_\epsilon)\nabla (\Tilde u-\Tilde g) \dx\dt - \iint_{\Omega_T} u(\eta_\epsilon\partial_t\varphi + (1-\eta_\epsilon)\partial_t (\Tilde u-\Tilde g))\dx\dt \\
    & = \iint_{\Omega_T} (\Tilde u-\Tilde g-\varphi)z\cdot \nabla \eta \dx\dt - \iint_{\partial\Omega\times(0,T)} \lvert Tu-Tg\rvert f^\infty(\cdot, \nu_\Omega)\dH\dt  \\
    & \leq \iint_{\Omega_T} \lvert \Tilde u-\Tilde g-\varphi\rvert f^\infty(x,\nabla\eta_\epsilon) \dx\dt - \iint_{\partial\Omega\times(0,T)} \lvert Tu-Tg\rvert f^\infty(\cdot, \nu_\Omega) \dH\dt. 
\end{split}
\end{equation}

Let us examine the first term of the right hand side as $\epsilon\to 0$. For $t\in (0,T)$, define the $\R^n$ valued measures $\mu_\epsilon(t)$, $\mu(t)$ by
\begin{equation*}
    \mu_\epsilon(t)(B)=D\left( (1-\eta_\epsilon)(\Tilde u(t)-\Tilde g(t)-\varphi(t))\right)(B \cap \Omega),
\end{equation*}
and 
\begin{equation*}
    \mu(t)(B)= \int_{B\cap\partial\Omega} (T\varphi(t)+Tg(t) -Tu(t))\nu_\Omega\dH.
\end{equation*}
Then \Cref{cut off trace lemma} shows that
\begin{equation}
    \label{area strict conv of test functions}
    \lvert(\mu_\epsilon(t), \mathcal L^n)\rvert(\overline{\Omega})\to \lvert(\mu(t), \mathcal L^n)\rvert(\overline{\Omega}) \text{ as } \epsilon\to 0.
\end{equation}
Hence applying \Cref{reshetnyak continuity}, we find that
\begin{align*}
    \lim_{\epsilon\to 0}\iint_{\Omega_T} \lvert \Tilde u-\Tilde g-\varphi\rvert f^\infty(x,\nabla\eta_\epsilon) \dx\dt &  = \int_0^T \lim_{\epsilon\to o} f(\mu_\epsilon(t))\dt  \\
    & = \iint_{\partial\Omega\times(0,T)} \lvert Tg-Tu+T\varphi\rvert f^\infty(\cdot,\nu_\Omega) \dH\dt.    
\end{align*}
Here we are using the second part of \Cref{cut off trace lemma} and the dominated convergence theorem move the limit inside the integral. Thus letting $\epsilon\to 0$ in \eqref{estimate for div cond from sign trace} shows $(i)$.

To see that $(ii)\implies (iii)$, we note first that similarly as before, $(ii)$ implies $\div z = \partial_t u$ in the sense of distributions. Let again $\Tilde u,\Tilde g\in L^1(0,T;W^{1,1}(\Omega))$ with $T\Tilde u=Tu$, $T\Tilde g=Tg$. From $(ii)$ we have 
\begin{align*}
    & \int_{\Omega} z(t)\cdot \nabla (\Tilde g(t)-\Tilde u(t))\dx+\int_{\Omega} \div z(t) (\Tilde g(t)-\Tilde u(t))  \dx\dt \\
    & = \int_{\partial\Omega} \lvert Tg(t)-Tu(t)\rvert f^\infty(\cdot,\nu_\Omega)\dH,
\end{align*}
for a.e. $t\in (0,T)$. On the other hand, applying the Gauss-Green formula \eqref{eq: Gauss Green in W11} we find that
\begin{align*}
    & \int_{\Omega} z(t)\cdot \nabla (\Tilde g(t)-\Tilde u(t))\dx+\int_{\Omega} \div z(t) (\Tilde g(t)-\Tilde u(t))  \dx\dt \\
    & = \int_{\partial\Omega} [z(t),\nu_\Omega](Tg(t)-Tu(t))\dH,
\end{align*}
so that 
\begin{equation}
    \label{eq: eq of integrals for sign function}
    \int_{\partial\Omega} \lvert Tg(t)-Tu(t)\rvert f^\infty(\cdot,\nu_\Omega)\dH = \int_{\partial\Omega} [z(t),\nu_\Omega](Tg(t)-Tu(t))\dH.
\end{equation}
By \cite[Lemma 3.5]{GornyMazonBook} we have
\begin{equation*}
    [z(t),\nu_\Omega]\leq f^\infty(\cdot,\nu_\Omega) \text{ $\mathcal H^{n-1}$-a.e.},
\end{equation*}
which together with \eqref{eq: eq of integrals for sign function} implies 
\begin{equation*}
    [z(t),\nu_\Omega](Tg(t)-Tu(t))= \lvert Tg(t)-Tu(t)\rvert f^\infty(\cdot,\nu_\Omega) \text{ $\mathcal H^{n-1}$-a.e.},
\end{equation*}
and hence 
 \begin{equation*}
    [z(t),\nu_\Omega]\in\sign(Tg(t)-Tu(t))f^\infty(\cdot, \nu_\Omega) \text{ $\mathcal H^{n-1}$-a.e.} 
\end{equation*}
The final implication, $(iii)\implies (ii)$ follows simply by applying Green's formula.
\end{proof}

We begin our study of weak solutions without the assumption that the time derivative is an $L^2$ function. In this setting, the following lemma will be very useful. The first part is a form of \eqref{divergence condition}, when the test functions are not compactly supported in time, and the second is a time mollified version of \eqref{divergence condition}.
\begin{lemma}
Suppose $u$ is a weak solution of \eqref{PDE}. Then 
\begin{equation}
\label{equation with boundary terms}
\begin{split}
    & \iint_{\Omega\times(t_1,t_2)} z \cdot \nabla \varphi - u \partial_t\varphi \dx\dt + \left[ \int_{\Omega} u\varphi\dx\right]_{t=t_1}^{t_2} \\
    & \leq \iint_{\partial\Omega\times(t_1,t_2)} \left(\lvert T\varphi+Tg-Tu\rvert - \lvert Tg-Tu\rvert \right) f^\infty(\cdot, \nu_\Omega) \, d \mathcal{H}^{n-1}\dt,
\end{split}
\end{equation}
for all $\varphi$ satisfying the conditions of \eqref{divergence condition}, and for a.e. $0<t_1<t_2<T$. Furthermore, for any $\delta>0$ the time mollified equation
\begin{equation}
\label{mollified equation}
\begin{split}
    & \iint_{\Omega_T} z^{\delta}\cdot \nabla\varphi + \partial_t u^{\delta}\varphi\dx\dt 
    - \iint_{\Omega_T} \delta^{-1}e^{-\frac{s}{\delta}} u_0\varphi(s)\dx\ds \\
    & \leq \int_{0}^T\int_0^t\int_{\partial\Omega} \delta^{-1}e^{-\frac{s}{\delta}} \lvert T(u-g)(t-s)-T\varphi(t)\rvert f^\infty(\cdot, \nu_\Omega) \, d\mathcal H^{n-1}\ds\dt \\
    & \quad - \iint_{\partial\Omega\times(0,T)} (1-e^{-\frac{t}{\delta}}) \lvert Tu-Tg\rvert f^\infty(\cdot, \nu_\Omega) \, d \mathcal{H}^{n-1}\dt,
\end{split}
\end{equation}
holds for $\varphi\in L^1(0,T;W^{1,1}(\Omega))\cap L^2(\Omega_T)$, compactly supported in time.
\end{lemma}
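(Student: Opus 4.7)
I would prove the two inequalities separately.

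For \eqref{equation with boundary terms}, I would test \eqref{divergence condition} with $\chi_h\varphi$, where $\chi_h\in W^{1,\infty}(0,T)$ is a piecewise linear in-time cut-off equal to $1$ on $[t_1,t_2]$, vanishing outside $[t_1-h, t_2+h]$, and interpolating linearly in between. Since $\chi_h$ is Lipschitz, $\chi_h\varphi$ meets the hypotheses of \eqref{divergence condition}. Splitting $\partial_t(\chi_h\varphi) = \chi_h'\varphi + \chi_h\partial_t\varphi$ and sending $h\to 0$ at Lebesgue points $t_1,t_2$ of $t\mapsto \int_\Omega u(t)\varphi(t)\dx$, the $\chi_h'$-piece converts by Lebesgue differentiation into $[\int_\Omega u\varphi\dx]_{t_1}^{t_2}$, while the boundary integral restricted to the transition regions $\partial\Omega\times([t_1-h,t_1]\cup[t_2,t_2+h])$ is bounded by $h\,\|T\varphi\|_{L^1}$ and vanishes by absolute continuity of the integral.

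For \eqref{mollified equation}, my plan is to apply \eqref{equation with boundary terms} on $(\epsilon,t)$ with the parameter-dependent test function $\widetilde\varphi_t(x,s) = \delta^{-1}e^{(s-t)/\delta}\varphi(x,t)$ (where $s$ is the time variable and $t$ is held fixed), then integrate the resulting inequality over $t\in(0,T)$ and send $\epsilon\to 0$. By Fubini and the definitions of $z^\delta, u^\delta$ with vanishing initial datum, the gradient term $\iint z\cdot\nabla\widetilde\varphi_t\dx\ds$ becomes $\iint z^\delta\cdot\nabla\varphi\dx\dt$. The time-derivative piece $-\iint u\,\partial_s\widetilde\varphi_t\dx\ds$, combined with the boundary contribution $[\int_\Omega u\widetilde\varphi_t\dx]_\epsilon^t$ and the identity $\partial_t u^\delta=\delta^{-1}(u-u^\delta)$, produces $\iint \partial_t u^\delta\,\varphi\dx\dt$; the boundary term at $s=\epsilon$ converges, thanks to the initial-value condition \eqref{initial value condition}, to $-\iint\delta^{-1}e^{-t/\delta}u_0\varphi\dx\dt$.

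For the trace integral on the right-hand side, the integrand $|\delta^{-1}e^{(s-t)/\delta}T\varphi(t) - T(u-g)(s)| - |T(u-g)(s)|$ is manipulated using the triangle inequality and the substitution $r=t-s$, which produces the shifted argument $T(u-g)(t-s)$ and the exponential weight $\delta^{-1}e^{-s/\delta}$ appearing in \eqref{mollified equation}. The correction $(1-e^{-t/\delta})|Tu-Tg|(t)$ arises from the convexity bound $|\lambda A - B|\le \lambda|A-B|+(1-\lambda)|B|$ (for $\lambda=\delta^{-1}e^{(s-t)/\delta}$) together with the identity $\int_0^t\delta^{-1}e^{(s-t)/\delta}\ds = 1-e^{-t/\delta}$.

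The main obstacle is the precise identification of the RHS in \eqref{mollified equation}: the pointwise triangle inequality leaves a residual time-mismatch between $|T(u-g)|(s)$ and $|T(u-g)|(t)$ that must be absorbed. To handle this rigorously, I would incorporate a parabolic Sobolev approximation of $u-g$ from \Cref{lemma: strict approx in parabolic bv} directly into the test function, sidestepping the lack of time regularity on $u$ and $g$, with the associated trace errors of the form $|T(u-g)(t)-T(u-g)(s)|$ controlled in the limit via \Cref{mollification of trace terms}.
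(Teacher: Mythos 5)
Your proof of \eqref{equation with boundary terms} via a piecewise-linear cut-off $\chi_h$ in time and Lebesgue differentiation at $t_1,t_2$ is correct and standard; the paper does not detail this step, so there is nothing to compare.

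Your strategy for \eqref{mollified equation} is genuinely different from the paper's, and it contains a real gap. The paper tests \eqref{equation with boundary terms} on $(0,T-s)$ with the unweighted time shift $\varphi(\cdot,\cdot+s)$, obtaining for a.e.\ fixed $s$ an inequality whose trace term still has the form $\lvert T\varphi(\cdot+s)+Tg-Tu\rvert-\lvert Tg-Tu\rvert$, and only then multiplies that inequality by the nonnegative scalar $\delta^{-1}e^{-s/\delta}$ and integrates over $s$. Since a nonnegative multiplier preserves an inequality, the exponential weight stays \emph{outside} the absolute values throughout, and Fubini produces the RHS of \eqref{mollified equation} directly. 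You instead test with $\widetilde\varphi_t(x,s)=\delta^{-1}e^{(s-t)/\delta}\varphi(x,t)$ and integrate over $t$. The volume terms do match: $z^\delta\cdot\nabla\varphi$, $\partial_t u^\delta\varphi$ and the $-\iint\delta^{-1}e^{-s/\delta}u_0\varphi\dx\ds$ piece all come out correctly once the $s=\epsilon$ boundary term and the initial-value condition are used. But the trace term in \eqref{equation with boundary terms} is \emph{nonlinear} in $\varphi$, so the factor $\lambda(s)=\delta^{-1}e^{(s-t)/\delta}$ lands inside the modulus, and after the change of variables you must show
\begin{equation*}
\int_0^t\Bigl(\lvert\lambda(s)T\varphi(t)-T(u-g)(s)\rvert-\lvert T(u-g)(s)\rvert\Bigr)\ds \ \le\ \int_0^t\lambda(s)\lvert T\varphi(t)-T(u-g)(s)\rvert\ds-\Bigl(\textstyle\int_0^t\lambda\Bigr)\lvert T(u-g)(t)\rvert.
\end{equation*}
The estimate $\lvert\lambda A-B\rvert\le\lambda\lvert A-B\rvert+(1-\lambda)\lvert B\rvert$ you invoke holds only for $\lambda\in[0,1]$, whereas $\lambda(s)$ reaches the value $\delta^{-1}$ at $s=t$; for $\delta<1$ it exceeds $1$ on a right neighbourhood of $t$, where the triangle inequality gives only $\lvert\lambda A-B\rvert-\lvert B\rvert\le\lambda\lvert A-B\rvert+(\lambda-2)\lvert B\rvert$ and $\lambda-2>-\lambda$. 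Concretely, with $T\varphi(t)=T(u-g)\equiv b$ constant, your trace term is $\lvert b\rvert\int_0^t(\lvert\lambda(s)-1\rvert-1)\ds$, which strictly exceeds the required $-(1-e^{-t/\delta})\lvert b\rvert$ whenever $\lambda>1$ somewhere on $(0,t)$. This is a failure of the decomposition, not merely a technicality, and the Sobolev-approximation patch in your final paragraph addresses only the time mismatch between $T(u-g)(s)$ and $T(u-g)(t)$ and not the $\lambda>1$ obstruction. The fix is structural: take the exponential weight out of the test function and apply it to the inequality afterwards, as the paper does.
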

\begin{proof}
    Let us derive \eqref{mollified equation}. Suppose first that $\varphi\in L^1(0,T;W^{1,1}_0(\Omega))\cap L^2(\Omega_T)$ with $\partial_t\varphi\in L^2(\Omega_T)$. By \eqref{equation with boundary terms}, for a.e. $s\in (0,T)$ we have 
    \begin{align*}
         &\int_s^T \int_\Omega z(x,t-s)\cdot \nabla \varphi(x,t) - u(x,t-s) \partial_t\varphi(x,t) \dx\dt \\
         & = \int_0^{T-s}\int_\Omega z(x,t)\cdot \nabla \varphi(x,t+s) - u(x,t) \partial_t\varphi(x,t+s) \dx\dt \\
         & = -\int_\Omega u(x,T-s) \varphi(x,T) \dx + \int_\Omega u(x,0)\varphi(x,s)\dx \\
         & \quad + \int_0^{T-s}\int_{\partial\Omega} \lvert Tu(t)-Tg(t)- T\varphi(t+s)\rvert - \lvert Tu(t)-Tg(t)\rvert  \, d \mathcal{H}^{n-1}\dt\\
         & =  \int_\Omega u_0\varphi(x,s)\dx + \int_0^{T-s}\int_{\partial\Omega} \lvert Tu(t)-Tg(t)- T\varphi(t+s)\rvert - \lvert Tu(t)-Tg(t)\rvert  \, d \mathcal{H}^{n-1}\dt.
    \end{align*}
    Here we used \eqref{initial value condition} and the assumption that $\varphi$ is compactly supported in time to deduce that the first boundary term vanishes. Multiply the above equation by $\delta^{-1} e^{-\frac{s}{\delta}}$ and integrate in $s$ over $(0,T)$. Switching the order of integration we have 
    \begin{align*}
         &\int_0^{T}  \delta^{-1}e^{-\frac{s}{\delta}}\int_s^T \int_\Omega z(x,t-s)\cdot \nabla \varphi(x,t) - u(x,t-s) \partial_t\varphi(x,t) \dx\dt\ds \\
         & = \int_0^{T} \int_{0}^{t}\int_\Omega  \delta^{-1}e^{-\frac{s}{\delta}} z(x,t-s)\cdot \nabla \varphi(x,t) - u(x,t-s) \partial_t\varphi(x,t) \dx\ds\dt \\
         & = \int_0^{T} \int_{0}^{t}\int_\Omega  \delta^{-1}e^{\frac{s-t}{\delta}} z(x,s)\cdot \nabla \varphi(x,t) -  \delta^{-1}e^{\frac{s-t}{\delta}}u(x,s) \partial_t\varphi(x,t) \dx\ds\dt \\
         & = \int_0^T \int_\Omega z^{\delta}(x,t)\cdot \nabla\varphi(x,t)- u^{\delta}(x,t)\partial_t\varphi(x,t)\dx\dt .
    \end{align*}
    Similarly, 
    \begin{align*}
        & \int_{0}^s \delta^{-1}e^{-\frac{s}{\delta}}\int_0^{T-s}\int_{\partial\Omega} \lvert Tu(t)-Tg(t)- T\varphi(t+s)\rvert - \lvert Tu(t)-Tg(t)\rvert  \, d \mathcal{H}^{n-1}\dt\ds\\
        & = \int_{0}^s \delta^{-1}e^{-\frac{s}{\delta}}\int_s^T\int_{\partial\Omega} \lvert T(u-g)(t-s)- T\varphi(t)\rvert - \lvert T(u-g)(t-s)\rvert  \, d \mathcal{H}^{n-1}\dt\ds \\
        & = \int_{0}^T\int_0^t\int_{\partial\Omega} \delta^{-1}e^{-\frac{s}{\delta}}\left( \lvert T(u-g)(t-s)- T\varphi(t)\rvert - \lvert Tu(t)-Tg(t)\rvert \right) \, d \mathcal{H}^{n-1}\ds\dt.
    \end{align*}
    Integrating by parts we thus have \eqref{mollified equation} for $\varphi$ with $\partial_t\varphi\in L^2(\Omega_T)$. Arguing by density and using the fact that the trace is continuous with respect to strict convergence we obtain the full statement.
\end{proof}

We are ready to prove our first result on the equivalence of weak and variational solutions, by showing that weak solutions are variational solutions. Moreover, we introduce a third condition, which is immediately shown to be equivalent to being a weak solution. This condition will be useful when considering the stability of weak solutions.
\begin{theorem}
\label{equivalences part 1}
Let $u\in L^1_{w^*}(0,T;\BV(\Omega))\cap L^\infty(0,T;L^2(\Omega))$.
Consider the following conditions:
\begin{enumerate}[(i)]
    \item $u$ is a weak solution of \eqref{PDE}.
    \item There exists $z\in L^\infty(\Omega_T,\R^n)$ with $z\in \partial_\xi f(x,\nabla u)$ for a.e. $(x,t)\in\Omega_T$ such that
    \begin{equation}
    \label{intermediate condition}
    \begin{split}
        &\int_0^{\tau} f(Du)(\Omega) + \iint_{\partial\Omega\times(0,\tau)}\lvert Tu-Tg\rvert f^\infty(\cdot, \nu_\Omega) \, d \mathcal{H}^{n-1}\dt + \iint_{\Omega_\tau} f^*(x,z)\dx\dt \\
        & \leq \iint_{\Omega_\tau} z\cdot \nabla v \dx \dt + \iint_{\partial\Omega\times(0,\tau)}\lvert Tv-Tg\rvert f^\infty(\cdot, \nu_\Omega) \, d \mathcal{H}^{n-1}\dt\\
        & \quad + \iint_{\Omega_{\tau}} \partial_t v (v-u)\dx\dt+\frac{1}{2}\|v(0)-u_0\|^2_{L^2(\Omega)}-\frac{1}{2}\|v(\tau)-u(\tau)\|_{L^2(\Omega)}^2,
    \end{split}
    \end{equation}
    holds for a.e. $\tau\in (0,T)$, and for every $v\in L^1(0,T;W^{1,1}(\Omega))\cap C([0,T],L^2(\Omega))$ with $\partial_t v\in L^2(\Omega_T)$
    \item $u$ is a variational solution of $\eqref{PDE}$.
\end{enumerate}
Then $(i)$ and $(ii)$ are equivalent, and imply $(iii)$.
\end{theorem}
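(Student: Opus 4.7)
The proof breaks into three implications: (ii) $\Rightarrow$ (iii), (i) $\Rightarrow$ (ii), and (ii) $\Rightarrow$ (i). The first is the most direct. Given $v\in L^1_{w^*}(0,T;\BV(\Omega))\cap C([0,T],L^2(\Omega))$ with $\partial_tv\in L^2$, I would use \Cref{lemma: strict approx in parabolic bv} to produce Sobolev approximations $v_i\in L^1(0,T;W^{1,1}(\Omega))$ converging to $v$ area-strictly in $\Omega_T$, with $v_i(0)\to v(0)$ and $\partial_tv_i\to\partial_tv$ in $L^2$. Applying (ii) with the Sobolev test function $v_i$, the pointwise Fenchel-Young inequality $z\cdot\nabla v_i\leq f(x,\nabla v_i)+f^*(x,z)$ cancels the $f^*$ terms on both sides, and since $f(Dv_i)(\Omega)=\int_\Omega f(x,\nabla v_i)\dx$ for Sobolev functions, the right-hand side becomes exactly the right-hand side of (iii) evaluated at $v_i$. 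Passing $i\to\infty$, \Cref{lemma:area strict convergences} (or \Cref{reshetnyak continuity}) gives convergence of the functional, while strict continuity of the trace handles the lateral boundary term.

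For (i) $\Rightarrow$ (ii), the plan is to test the divergence condition \eqref{divergence condition} with $\varphi=(\tilde u^\delta-v)\eta_{h,\tau}$, where $\tilde u^\delta\in L^1(0,T;W^{1,1}(\Omega))$ is a trace-preserving area-strict Sobolev approximation of the time mollification $u^\delta$ (via \Cref{lemma: strict approx in parabolic bv} applied to $u^\delta$, noting that $\partial_tu^\delta=(u-u^\delta)/\delta\in L^2$), $v$ is the admissible Sobolev test function, and $\eta_{h,\tau}$ is a Lipschitz time cutoff approximating $\chi_{(0,\tau)}$. Letting $h\to 0$, the endpoint contributions at $t=0$ (absorbed by \eqref{initial value condition}) and $t=\tau$ assemble into $\tfrac12\|v(0)-u_0\|^2-\tfrac12\|v(\tau)-u(\tau)\|^2+\iint\partial_tv(v-u)\dx\dt$. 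The lateral trace term reduces, via \Cref{mollification of trace terms}, to $\iint(|Tv-Tg|-|Tu-Tg|)f^\infty\dH\dt$ in the limit $\delta\to 0$. The crucial step is identifying $\lim_{\delta\to 0}\iint z\cdot\nabla\tilde u^\delta\dx\dt$ with $\int_0^\tau f(Du)(\Omega)\dt+\iint f^*(x,z)\dx\dt$: on the absolutely continuous part this is the Fenchel equality ensured by $z\in\partial_\xi f(x,\nabla u)$, while the singular contribution is recovered from the pairing condition \eqref{pairing condition}, applied with a suitable spatial cutoff, combined with the Reshetnyak continuity theorem for the area-strict approximation $\tilde u^\delta\to u^\delta\to u$.

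For (ii) $\Rightarrow$ (i), the plan is to linearize (ii) around the same approximation $\tilde u^\delta$. Substituting $v=\tilde u^\delta+h\varphi$ and expanding, the $h^2$ contributions from $\iint\partial_t(\tilde u^\delta+h\varphi)((\tilde u^\delta+h\varphi)-u)\dx\dt$ cancel exactly with those from $\tfrac12\|(\tilde u^\delta+h\varphi)(0)-u_0\|^2-\tfrac12\|(\tilde u^\delta+h\varphi)(\tau)-u(\tau)\|^2$, leaving an expression affine in $h$ plus the convex trace correction $\iint(|T\tilde u^\delta+hT\varphi-Tg|-|T\tilde u^\delta-Tg|)f^\infty\dH\dt$. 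Sending $h\to 0^\pm$ and exploiting the convexity of the trace correction, combined with \Cref{mollification of trace terms} in the limit $\delta\to 0$, reconstructs the divergence condition \eqref{divergence condition}. Specializing to $\varphi\in C_0^\infty(\Omega_T)$ annihilates all boundary and endpoint contributions, and the affine identity then yields $\div z=\partial_tu$ distributionally; the pairing condition \eqref{pairing condition} follows from the area-strict identification of $\iint z\cdot\nabla\tilde u^\delta\dx\dt$ together with the Reshetnyak continuity theorem. The initial value condition \eqref{initial value condition} is obtained exactly as in \Cref{var sol initial value} by testing (ii) with standard mollifications of $u_0$.

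The principal obstacle throughout is to reconcile the pairing condition (which natively demands $C_0^\infty(\Omega_T)$ test functions) with the divergence condition (which demands $L^1(0,T;W^{1,1}(\Omega))$ test functions) without a normal trace for $z$. The area-strict approximation results of \Cref{section: approximations}, together with Reshetnyak continuity and the trace control provided by \Cref{mollification of trace terms}, are the indispensable technical devices bridging this gap.
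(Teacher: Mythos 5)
Your proposal captures the correct high-level strategy (time mollification, area-strict Sobolev approximation, spatial cutoffs plus the pairing condition to identify $\iint z\cdot\nabla u^\delta$), and the $(ii)\Rightarrow(iii)$ argument matches the paper. However, there are two genuine gaps in your plan that the paper resolves in a way you have not anticipated.

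First, for $(i)\Rightarrow(ii)$: you propose testing the divergence condition \eqref{divergence condition} with $\varphi=(\tilde u^\delta-v)\eta_{h,\tau}$ and claim the $t=0$ endpoint is ``absorbed by \eqref{initial value condition}.'' This does not work directly: the test functions allowed in \eqref{divergence condition} must be compactly supported in time, so no amount of cutoff-squeezing gives the term $\tfrac12\|v(0)-u_0\|^2_{L^2(\Omega)}$. The paper handles this by first proving Lemma 5.2, which derives (a) a version of the divergence condition with explicit boundary terms $\left[\int_\Omega u\varphi\dx\right]_{t_1}^{t_2}$ and (b) a \emph{mollified} divergence condition \eqref{mollified equation} in which the initial datum $u_0$ appears explicitly via the time mollification of the vector field $z$ and of $u$. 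Then $(ii)$ is derived by testing the mollified equation with $\zeta(u_i-v)$ and also with $\zeta\eta u_i$ to bound $\iint\zeta z^\delta\cdot\nabla u_i$ from below via \eqref{pairing condition}. Without the mollified equation your plan cannot account for $u_0$.

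Second, for $(ii)\Rightarrow(i)$: your plan to reconstruct \eqref{pairing condition} from ``the area-strict identification of $\iint z\cdot\nabla\tilde u^\delta$ together with Reshetnyak continuity'' is too weak. The pairing condition carries a spatial test function $\varphi$ multiplying every term ($\iint\varphi\,df(Du)$, $\iint\varphi f^*(x,z)$, $\iint u^2\partial_t\varphi$, $\iint uz\cdot\nabla\varphi$), and this structure does not arise by simply passing an additive perturbation to the limit. The paper instead tests \eqref{intermediate condition} with the \emph{multiplicative} perturbation $v=(1-\varphi)u_i$ (with $\|\varphi\|_\infty\leq 1$), which, after the Fenchel inequality $z\cdot\nabla((1-\varphi)u_i)\leq -u_iz\cdot\nabla\varphi+(1-\varphi)(f(x,\nabla u_i)+f^*(x,z))$, the algebra of \eqref{eq: time derivative simplifies}, and the limits $i\to\infty$, $\delta\to 0$, $w\to u_0$, collapses exactly to \eqref{pairing condition}. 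For the divergence condition, your linearization $v=\tilde u^\delta+h\varphi$, $h\to 0^\pm$ is workable in principle but introduces needless delicacy at the set $\{Tu=Tg\}$, where the one-sided derivatives of $|\cdot|$ differ; the paper avoids this entirely by substituting $v=u_i-\varphi$ at a fixed scale and retaining the convex trace term intact, which after cancellation of $\int_0^T f(Du)(\Omega)\dt$ and $\iint f^*$ on both sides yields \eqref{divergence condition} exactly (using $|Tu-Tg-T\varphi|=|T\varphi+Tg-Tu|$).
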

\begin{proof}
Let us start by showing $(i)\implies (ii)$. Let $v$ be a comparison map as in $(ii)$. Let $\delta>0$, and let $u_i\in L^1(0,T;W^{1,1}(\Omega))$, $i\in\mathbb N$, be a sequence converging to $u^\delta$ as in \Cref{lemma: strict approx in parabolic bv}. Here, $u^\delta$ is the time mollification of $u$ as in \eqref{time mollification}, with $w=0$. Let $\zeta\in C_0^\infty(0,T)$. Applying $\zeta (u_i-v)$ as a test function in \eqref{mollified equation} gives
\begin{align*}
    & \iint_{\Omega_T} z^{\delta}\cdot \nabla(\zeta ( u_i-v)) \dx\dt+ \iint_{\Omega_T} \delta^{-1}e^{-\frac{s}{\delta}} u_0\zeta(s)(u_i(s)-v(s))\dx\ds \\
    & \quad +\iint \partial_t u^{\delta} \zeta (u_i-v)\dx\dt \\
    & \leq \int_{0}^T\int_0^t\int_{\partial\Omega} \delta^{-1}e^{-\frac{s}{\delta}} \zeta \lvert T(u-g)(t-s)- T(u^\delta-v)(t)\rvert f^\infty(\cdot, \nu_\Omega) \,d\mathcal H ^{n-1}\ds\dt \\
    & \quad - \iint_{\partial\Omega\times(0,T)} (1-e^{-\frac{t}{\delta}}) \zeta\lvert Tu-Tg\rvert f^\infty(\cdot, \nu_\Omega) \, d \mathcal{H}^{n-1}\dt.
\end{align*}
Rearranging the terms we have
\begin{equation}
\label{inserted equation 1}
\begin{split}
    & \iint_{\Omega_T} \zeta z^{\delta}\cdot \nabla u_i \dx\dt + \iint_{\Omega_T} \delta^{-1}e^{-\frac{s}{\delta}}u_0\zeta(s)(u_i(s)-v(s))\dx\ds \\
    & \leq \int_{0}^T\int_0^t\int_{\partial\Omega} \delta^{-1}e^{-\frac{s}{\delta}} \zeta \lvert T(u-g)(t-s)- T(u^\delta-v)(t)\rvert f^\infty(\cdot, \nu_\Omega)\,d\mathcal H ^{n-1}\ds\dt \\ 
    & \quad +\iint_{\Omega_T} \zeta  z^{\delta}\cdot \nabla v \dx\dt -\iint_{\Omega_T} \partial_t u^{\delta} \zeta (u_i-v)\dx\dt \\
    & \quad - \iint_{\partial\Omega\times(0,T)} (1-e^{-\frac{t}{\delta}})\zeta \lvert Tu-Tg\rvert f^\infty(\cdot, \nu_\Omega) \dH\dt.
\end{split}
\end{equation}
We want to pass to the limit $i\to\infty$ in the above. We begin by concidering the left hand side. Let $\eta\in C_0^\infty(\Omega)$ with $0\leq\eta\leq 1$. To estimate the left hand side of \eqref{inserted equation 1} from below we see that
    \begin{equation}  
        \iint_{\Omega_T} \eta\zeta z^\delta \cdot \nabla u_i\dx\dt - \iint_{\Omega_T} (1-\eta)\zeta \|z\|_\infty\left( \lvert\nabla u_i\rvert \right) \dx\dt \leq  \iint_{\Omega_T} \zeta z^{\delta}\cdot \nabla u_i \dx\dt.\label{eq: approximating with smooth func for applying pde}
    \end{equation}
    On the other hand, testing \eqref{mollified equation} with $\zeta\eta u_i$ gives
    \begin{align*}
        & \lim_{i\to\infty} \iint_{\Omega_T} \zeta \eta  z^{\delta}\cdot \nabla u_i \dx\dt  \\
        & = - \iint_{\Omega_T} \partial_t u^\delta \zeta\eta u^\delta \dx\dt- \iint_{\Omega_T} \delta^{-1}e^{-\frac{s}{\delta}} u_0\eta\zeta(s)u^\delta(s)\dx\ds \\
        & \quad - \iint_{\Omega_T} u^\delta \zeta z^{\delta}\cdot\nabla\eta\dx\dt\\
        & = \frac{1}{2}\iint_{\Omega_T} \left(u^\delta\right)^2 \partial_t \zeta\eta \dx\dt - \iint_{\Omega_T} \delta^{-1}e^{-\frac{s}{\delta}} u_0\eta\zeta(s)u^\delta(s)\dx\ds \\
        & \quad - \iint_{\Omega_T} u^\delta \zeta z^{\delta}\cdot\nabla\eta\dx\dt.
    \end{align*}
    Note that equality holds here because the test function is compactly supported. The second term on the right hand side of the above equation will converge to $0$ as $\delta\to 0$, as $u\in L^\infty(0,T;L^2(\Omega))$ and $\zeta$ is compactly supported. This implies
    \begin{equation}
    \label{eq: using measure condition}
    \begin{split}
        \lim_{\delta\to 0}\lim_{i\to\infty} \iint_{\Omega_T} \zeta \eta  z^{\delta}\cdot \nabla u_i \dx\dt & =\frac{1}{2}\iint_{\Omega_T} u^2 \partial_t \zeta\eta \dx\dt - \iint_{\Omega_T} u \zeta z\cdot\nabla\eta\dx\dt \\
        & = \iint_{\Omega_T} \eta\zeta\,d  f(Du)\dt + \iint_{\Omega_T} \eta\zeta f^*(\cdot,z)\dx\dt, 
    \end{split}
    \end{equation}
    where in the second step we used \eqref{pairing condition}. Combining \eqref{eq: using measure condition} and \eqref{eq: approximating with smooth func for applying pde} we have shown
    \begin{align*}
        & \iint_{\Omega_T} \eta\zeta\,d  f(Du)\dt + \iint_{\Omega_T} \eta\zeta f^*(\cdot,z)\dx\dt \\
        & \leq  \limsup_{\delta\to 0}\limsup_{i\to\infty}\iint_{\Omega_T} \zeta z^{\delta}\cdot \nabla u_i \dx\dt + \|z\|_\infty\iint_{\Omega_T} (1-\eta)\zeta  \,d \|Du\|\dt,
    \end{align*}
    for any $\eta\in C_0^\infty(\Omega)$ with $0\leq \eta\leq 1$. This implies 
    \begin{align*}
        & \int_{0}^T \zeta f(Du)(\Omega)\dt + \iint_{\Omega_T}\zeta f^*(\cdot,z)\dx\dt \\
        & \leq \limsup_{\delta\to 0}\limsup_{i\to\infty}\iint_{\Omega_T} \zeta z^{\delta}\cdot \nabla u_i \dx\dt .
    \end{align*}
    We pass to the limit in \eqref{inserted equation 1} to conclude
    \begin{equation}
    \label{inserted equation 2}
    \begin{split}
        & \int_0^T \zeta f(Du)(\Omega)\dt + \iint_{\Omega_T}\zeta f^*(\cdot,z)\dx\dt \\
        & \leq \limsup_{\delta\to 0} \limsup_{i\to\infty} \iint_{\Omega_T} \zeta z^{\delta}\cdot \nabla u_i \dx\dt \\
        & \leq \limsup_{\delta\to 0} \Bigg( \iint_{\Omega_T} \zeta  z^{\delta}\cdot \nabla v \dx\dt -\iint_{\Omega_T} \partial_t u^{\delta} \zeta (u^\delta-v)\dx\dt \\
        & \qquad+ \int_{0}^T\int_0^t\int_{\partial\Omega} \delta^{-1}e^{-\frac{s}{\delta}} \zeta \lvert T(u-g)(t-s)- T(u^\delta-v)(t))f^\infty(\cdot, \nu_\Omega)\rvert\ds\dt\\
        & \qquad - \iint_{\partial\Omega\times(0,T)} (1-e^{-\frac{t}{\delta}}) \lvert Tu-Tg\rvert f^\infty(\cdot, \nu_\Omega)  \, d \mathcal{H}^{n-1}\dt\Bigg).
    \end{split}
    \end{equation}
    Here, we used that the second term on the left hand side of \eqref{inserted equation 1} will converge to $0$, similarly as before. We calculate the term with the time derivatives as
    \begin{align*}
        & -\iint_{\Omega_T} \partial_t u^{\delta} \zeta (u^\delta-v)\dx\dt \\
        & = \iint_{\Omega_T} \zeta \partial_t v (v-u^\delta)\dx\dt-\iint_{\Omega_T} \partial_t (u^{\delta}-v) \zeta (u^\delta-v)\dx\dt \\
        & = \iint_{\Omega_T} \zeta \partial_t v (v-u^\delta)\dx\dt +\frac{1}{2}\iint_{\Omega_T} (u^{\delta}-v)^2 \partial_t\zeta \dx\dt.
    \end{align*} 
    By \Cref{mollification of trace terms} we have
    \begin{align*}
        & \lim_{\delta\to 0} \int_{0}^T\int_0^t\int_{\partial\Omega} \delta^{-1}e^{-\frac{s}{\delta}} \zeta\lvert T(u-g)(t-s)- T(u^\delta-v)(t))\rvert f^\infty(\cdot, \nu_\Omega)\dH\ds\dt\\
        & \leq \int_{0}^T\zeta f^\infty(\cdot, \nu_\Omega)\lvert Tg-Tv\rvert f^\infty(\cdot, \nu_\Omega) \, d \mathcal{H}^{n-1}\dt.
    \end{align*}
    We have shown that \eqref{inserted equation 2} becomes
    \begin{align*}
        & \int_0^T \zeta f(Du)(\Omega)\dt + \iint_{\partial\Omega\times(0,T)} \zeta \lvert Tu-Tg\rvert \dH\dt+ \iint_{\Omega_T} \zeta f^*(\cdot,z)\dx\dt\\
        & \leq  \iint_{\Omega_T} \zeta  z\cdot \nabla v \dx\dt + \iint_{\partial\Omega\times(0,T)} \zeta \lvert Tv-Tg \rvert f^\infty(\cdot, \nu_\Omega) \, d \mathcal{H}^{n-1}\dt \\
        & \quad +\iint_{\Omega_T} \zeta \partial_t v (v-u)\dx\dt + \frac{1}{2}\iint_{\Omega_T} (u-v)^2 \partial_t\zeta \dx\dt.  
    \end{align*}
    Let $\tau\in (0,T)$, $h>0$. Writing the above equation for $\zeta=\zeta_h$ given by
    \begin{equation}
    \label{eq: cut off in time}
        \zeta_h(t)
        =\begin{cases}
            \frac{1}{h}(t-h) ,&\quad 0<t < h,\\
            1,&\quad h\leq t\leq \tau,\\
            -\frac{1}{h}(t-\tau-h),&\quad \tau<t\leq \tau+h,\\
            0,&\quad\text{otherwise}.
        \end{cases}
    \end{equation}
    we have
    \begin{align*}
        & \int_{h}^{\tau}  f(Du)(\Omega)\dt + \iint_{\partial\Omega\times(h,\tau)} \lvert Tu-Tg\rvert \dH\dt + \iint_{\Omega\times(h,\tau)} f^*(\cdot,z)\dx\dt \\
        & \leq \iint_{\Omega_T} \zeta_h  z\cdot \nabla v \dx\dt+\iint_{\partial\Omega\times(0,T)} \zeta_h \lvert Tv-Tg \rvert f^\infty(\cdot, \nu_\Omega) \, d \mathcal{H}^{n-1}\dt \\
        & \quad + \iint_{\Omega_T} \partial_t v\zeta_h (v-u)\dx\dt+ \frac{1}{h}\int_{h}^{2h} \int_\Omega (u-v)^2 \dx\dt-\frac{1}{h}\int_{\tau}^{\tau+h} \int_\Omega (u-v)^2 \dx\dt.
    \end{align*}
    Letting $h\to 0$ shows that 
    \begin{align*}
        &\int_0^{\tau}  f(Du)(\Omega)\dt + \iint_{\partial\Omega\times(0,\tau)} \lvert Tu-Tg\rvert \dH\dt+\iint_{\Omega_\tau} f^*(\cdot,z)\dx\dt\\
        & \leq \iint_{\Omega_\tau} z\cdot v \dx \dt + \iint_{\partial\Omega\times(0,\tau)} \lvert Tv-Tg \rvert f^\infty(\cdot, \nu_\Omega)\, d \mathcal{H}^{n-1}\dt\\
        & \quad + \iint_{\Omega_{\tau}} \partial_t v (v-u)\dx\dt+\frac{1}{2}\|v(0)-u_0\|^2_{L^2(\Omega)}-\frac{1}{2}\|(v-u)(\tau)\|^2_{L^2(\Omega)},
    \end{align*}
    for a.e. $\tau\in (0,T)$. This finishes the proof of $(i)\implies (ii)$.

    Let us then prove that $(ii)\implies (i)$. The initial condition \eqref{initial value condition} can be obtained from \eqref{intermediate condition} by a similar argument as in the proof of \Cref{var sol initial value}. We begin by showing \eqref{divergence condition}. Let $\delta>0$ and let $u^\delta$ be the time mollification of $u$ as in \eqref{time mollification}, with $w\in \BV(\Omega)\cap L^2(\Omega)$. Let $u_i\in L^1(0,T;W^{1,1}(\Omega))$, $i\in\mathbb N$, be a sequence converging to $u^\delta$  as in \Cref{lemma: strict approx in parabolic bv}. Testing \eqref{intermediate condition} with $v=u_i-\varphi$, where $\varphi$ is as in \eqref{divergence condition}, gives
    \begin{align*}
        &\int_0^{T}  f(Du)(\Omega)\dt + \iint_{\partial\Omega\times(0,T)} \lvert Tu-Tg\rvert f^\infty(\cdot, \nu_\Omega)\dH\dt + \iint_{\Omega_T} f^*(x,z)\dx\dt\\
        & \leq \iint_{\Omega_T} z\cdot \nabla(u_i+e^{-\frac{t}{\delta}}u_{0,\epsilon} -\varphi) \dx \dt \\
        & \quad + \iint_{\partial\Omega\times(0,T)} \lvert Tu^\delta-T\varphi-Tg\rvert f^\infty(\cdot, \nu_\Omega)\, d \mathcal{H}^{n-1}\dt\\
        & \quad + \iint_{\Omega_T} \partial_t (u_i-\varphi) (u_i-\varphi-u)\dx\dt+\frac{1}{2}\|u_i(0)-u_0\|^2_{L^2(\Omega)}. 
    \end{align*}
    Estimating $z\cdot \nabla u_i\leq f(x,\nabla u_i)+f^*(x,z)$ and letting $i\to\infty$ we have
    \begin{equation}
    \label{estimate for interm to weak sol}
    \begin{split}
        &\int_0^{T}  f(Du)(\Omega)\dt + \iint_{\partial\Omega\times(0,T)} \lvert Tu-Tg\rvert f^\infty(\cdot, \nu_\Omega)\dH\dt+\iint_{\Omega_T} f^*(x,z)\dx\dt\\
        & \leq \int_0^T  f(Du^\delta)(\Omega) \dt +  \iint_{\Omega_T} f^*(x,z)-z\cdot\nabla \varphi\dx\dt \\
        & \quad + \iint_{\partial\Omega\times(0,T)} \lvert Tu^\delta-T\varphi-Tg \rvert f^\infty(\cdot, \nu_\Omega)\, d \mathcal{H}^{n-1}\dt \\
        & \quad + \iint_{\Omega_T} \partial_t (u^\delta-\varphi) (u^\delta-\varphi-u)\dx\dt+\frac{1}{2}\|w-u_0\|^2_{L^2(\Omega)}. 
    \end{split}
    \end{equation}
    For the time derivative we integrate by parts and use \eqref{derivative of time mollification} to conclude
    \begin{align*}
        & \limsup_{\delta\to 0}\iint_{\Omega_T} \partial_t (u^\delta-\varphi) (u^\delta-\varphi-u)\dx\dt \\
        & \leq \limsup_{\delta\to 0} \iint_{\Omega_T} u^\delta \partial_t\varphi -\partial_t\varphi (u^\delta-u)\dx\dt = \iint_{\Omega_T} u\partial_t\varphi \dx\dt.
    \end{align*}
    Letting $\delta\to 0$ we conclude
    \begin{align*}
        &\int_0^{T}  f(Du)(\Omega)\dt + \iint_{\partial\Omega\times(0,T)} \lvert Tu-Tg\rvert f^\infty(\cdot, \nu_\Omega)\dH\dt\\
        & \leq \int_0^T  f(Du) \dx \dt  - \iint_{\Omega_T}z\cdot\nabla \varphi\dx\dt \\
        & \quad + \iint_{\partial\Omega\times(0,T)} \lvert Tu-Tg-T\varphi\rvert f^\infty(\cdot, \nu_\Omega)\dH\dt\\
        & \quad + \iint_{\Omega_T} u \partial_t \varphi \dx\dt+\frac{1}{2}\|w-u_0\|^2_{L^2(\Omega)}.
    \end{align*}
    Here we applied \Cref{lemma: time moll conv area strictly} and \Cref{reshetnyak continuity} after choosing a sequence $\delta_i>0$, $i\in\mathbb N$, with $\delta_i\to 0$ as $i\to\infty$. Finally letting $w\to u_0$ in $L^2(\Omega)$ gives \eqref{divergence condition}. To show \eqref{pairing condition}, let $\varphi\in C_0^\infty(\Omega_T)$ with $\|\varphi\|_\infty\leq 1$. Let $\delta>0$, $w\in\BV(\Omega)\cap L^2(\Omega)$ and let $u_i\in L^1(0,T;W^{1,1}(\Omega))$, $i\in\mathbb N$, be a sequence converging to $u^\delta$ as in \Cref{lemma: strict approx in parabolic bv}. Apply $u_i-\varphi u_i$ as a test function in \eqref{intermediate condition} to obtain
    \begin{align*}
        &\int_0^{T}  f(Du)(\Omega)\dt + \iint_{\partial\Omega\times(0,T)} \lvert Tu-Tg\rvert f^\infty(\cdot, \nu_\Omega)\dH\dt + \iint_{\Omega_T} f^*(x,z)\dx\dt\\
        & \leq -\iint_{\Omega_T} u_i z\cdot \nabla\varphi  \dx \dt + \iint_{\Omega_T} (1-\varphi) (f(x,\nabla u_i) + f^*(x,z))\dx \dt\\
        & \quad + \iint_{\partial\Omega\times(0,T)} \lvert Tu^\delta-Tg\rvert f^\infty(\cdot, \nu_\Omega)\, d \mathcal{H}^{n-1}\dt\\
        & \quad + \iint_{\Omega_T} \partial_t (u_i-\varphi u_i)(u_i-\varphi u_i -u)\dx\dt+\frac{1}{2}\|u_i(0)-u_0\|^2_{L^2(\Omega)},
    \end{align*}
where we used the Fenchel inequality to estimate
\begin{align*}
    z\cdot \nabla((1-\varphi)u_i) & =-z\cdot \nabla \varphi u_i + (1-\varphi)z\cdot \nabla u_i \\
    & \leq -z\cdot \nabla \varphi u_i + (1-\varphi)(f(x,\nabla u_i)+f^*(x,z).
\end{align*}
Letting $i\to \infty$ we conclude that
\begin{equation}
\begin{split}
\label{estimate for measure condition}
    &\int_0^{T}  f(Du)(\Omega)\dt + \iint_{\partial\Omega\times(0,T)} \lvert Tu-Tg\rvert f^\infty(\cdot, \nu_\Omega)\dH\dt+\iint_{\Omega_T} f^*(x,z)\dx\dt\\
    & \leq -\iint_{\Omega_T} u^\delta z\cdot \nabla\varphi  \dx \dt + \iint_{\Omega_T} (1-\varphi)\,d  f(Du^\delta)\dt + \iint_{\Omega_T} (1-\varphi) f^*(x,z)\dx \dt\\
    & \quad + \iint_{\partial\Omega\times(0,T)} \lvert Tu^\delta-Tg\rvert f^\infty(\cdot, \nu_\Omega)\, d \mathcal{H}^{n-1}\dt\\
    & \quad + \iint_{\Omega_T} \partial_t (u^\delta-\varphi u^\delta)(u^\delta-\varphi u^\delta-u)\dx\dt+\frac{1}{2}\|w-u_0\|^2_{L^2(\Omega)}. 
\end{split}
\end{equation}
For the term containing the time derivative we have
\begin{equation}
\label{eq: time derivative simplifies}
\begin{split}
    & \iint_{\Omega_T} \partial_t (u^\delta-\varphi u^\delta)(u^\delta-u-\varphi u^\delta)\dx\dt \\
    & = \iint_{\Omega_T} \partial_t u^\delta (u^\delta-u-\varphi u^\delta)-\partial_t(\varphi u^\delta)(u^\delta-u-\varphi u^\delta) \dx\dt \\
    & = \iint_{\Omega_T} (1-\varphi)\partial_t u^\delta (u^\delta-u) - \frac{1}{2}(u^\delta)^2\partial_t\varphi-\partial_t\varphi u^\delta(u^\delta-u) \dx\dt \\
    & \leq \frac{1}{2}\iint_{\Omega_T} u^2 \partial_t\varphi\dx\dt-\partial_t\varphi u^\delta(u^\delta-u)\dx\dt, 
\end{split}
\end{equation}
where in the second step we integrated by parts and in the third step we used \eqref{derivative of time mollification}. Hence we have shown that
\begin{equation*}
    \limsup_{\delta\to 0} \iint_{\Omega_T} \partial_t (u^\delta-\varphi u^\delta)(u^\delta-u-\varphi u^\delta)\dx\dt\leq \frac{1}{2}\iint_{\Omega_T} u^2\partial_t\varphi\dx\dt.
\end{equation*}
Thus, letting $\delta\to 0$ in \eqref{estimate for measure condition} and absorbing terms gives
\begin{align*}
    &\iint_{\Omega_T} \varphi\,d  f(Du)(\Omega)\dt + \iint_{\Omega_T} \varphi f^*(x,z)\dx\dt\\
    & \leq -\iint_{\Omega_T} u^\delta z\cdot \nabla\varphi  \dx \dt  + \frac{1}{2}\iint_{\Omega_T} u^2 \partial_t\varphi\dx\dt+\frac{1}{2}\|w-u_0\|^2_{L^2(\Omega)}.
\end{align*}
Letting $w\to u_0$ in $L^2(\Omega)$ and arguing by linearity shows \eqref{pairing condition}.

Let us finally see that $(ii)\implies (iii)$. Let $v\in L^1_{w^*}(0,T;\BV(\Omega))\cap C([0,T],L^2(\Omega))$ with $\partial_t v\in L^2(\Omega_T)$. Let $v_i\in L^1(0,T;W^{1,1}(\Omega))$, $i\in\mathbb N $,  be a sequence converging to $v$ as in \Cref{lemma: strict approx in parabolic bv}. We may additionally assume $v_i(t)\to v(t)$ in $L^2(\Omega)$ as $i\to\infty$ for a.e. $t\in (0,T)$. Testing \eqref{intermediate condition} with $v_i$ and using the Fenchel inequality to estimate 
\begin{equation*}
    z\cdot \nabla v_i\leq f(x,\nabla v_i)+f^*(x,z)    
\end{equation*}
shows that
\begin{align*}
    & \int_0^\tau f(Du)(\Omega)\dt + \iint_{\partial\Omega\times(0,\tau)}\lvert Tg-Tu\rvert f^\infty(\cdot, \nu_\Omega)\dH\dt\\
    & \leq  \int_0^\tau  f(Dv_i)(\Omega)\dt + \iint_{\partial\Omega\times(0,\tau)}\lvert Tg-Tv\rvert f^\infty(\cdot, \nu_\Omega)\dH\dt \\ 
    & \quad +\iint_{\Omega_\tau} \partial_t v_i (v_i-u)\dx\dt +\frac{1}{2}\|v(0)-u_0\|^2_{L^2(\Omega)} - \frac{1}{2}\|(v_i-u)(\tau)\|^2_{L^2(\Omega)}.
\end{align*}
We let $i\to\infty$ in the above and use \Cref{reshetnyak continuity} to conclude that \eqref{var inequality} holds for a.e. $\tau\in (0,T)$, which completes the proof.
\end{proof}
\begin{remark}
\label{remark on subgradient in def of sol}
In the definition of a weak solution or \eqref{intermediate condition}, it is equivalent to assume $z\in \partial_\xi f(x,\nabla u)$ or $(x,t)\mapsto f^*(x,z(x,t))\in L^1(\Omega_T)$. To see this assume the latter holds. Let $\delta>0$, $u^\delta$ be as in \eqref{time mollification} with $w=0$ and let $u_i$ be a sequence converging to $u^\delta$ as in \Cref{lemma: strict approx in parabolic bv}. Let $\varphi\in C_0^\infty(\Omega_T)$ with $\varphi\geq 0$. As in the proof of \Cref{equivalences part 1}, \eqref{divergence condition} implies
\begin{align*}
    \lim_{\delta\to 0}\lim_{i\to\infty} \iint_{\Omega_T} \varphi  z^{\delta}\cdot \nabla u_i \dx\dt & =\frac{1}{2}\iint_{\Omega_T} u^2 \partial_t \varphi \dx\dt - \iint_{\Omega_T} u z\cdot\nabla\varphi\dx\dt,
\end{align*}
and thus from \eqref{pairing condition}
\begin{equation*}
    \iint_{\Omega_T} \varphi \,df(Du)\dt + \iint_{\Omega_T} \varphi f^*(x,z)\dx\dt = \lim_{\delta\to 0}\lim_{i\to\infty} \iint_{\Omega_T} \varphi  z^{\delta}\cdot \nabla u_i \dx\dt.
\end{equation*}
On the other hand, 
\begin{align*}
    & \lim_{\delta\to 0}\lim_{i\to\infty} \iint_{\Omega_T}\varphi z^\delta \cdot \nabla u_i\dx\dt \\
    & \leq \lim_{\delta\to 0}\lim_{i\to\infty}\left(  \|z\|_\infty \iint_{\Omega_T}\varphi \lvert \nabla u- \nabla u_i\rvert \dx\dt + \iint_{\Omega_T}\varphi z^\delta \cdot \nabla u\dx\dt\right).
\end{align*}
To evaluate the first term on the right hand side, we argue as in the proof of \Cref{lemma:area strict convergences} to conclude 
\begin{equation*}
    \lim_{i\to\infty} \iint_{\Omega_T} \varphi \lvert \nabla u -\nabla u_i\rvert\dx\dt = \iint_{\Omega_T} \varphi \lvert\nabla u-\nabla u^\delta\rvert \dx\dt + \iint_{\Omega_T} \varphi  \,d\|D^s u^\delta\| \dt,
\end{equation*}
and similarly taking $\delta\to 0$ in the above gives
\begin{equation*}
    \lim_{\delta\to 0}\lim_{i\to\infty} \iint_{\Omega_T} \varphi \lvert \nabla u -\nabla u_i\rvert\dx\dt = \iint_{\Omega_T} \varphi  \,d\|D^s u\| \dt.
\end{equation*}
We conclude that
\begin{align*}
    & \iint_{\Omega_T} \varphi \,df(Du)\dt + \iint_{\Omega_T} \varphi f^*(x,z)\dx\dt \\
    & \leq  \iint_{\Omega_T}\varphi z \cdot \nabla u\dx\dt + \|z\|_\infty \iint_{\Omega_T} \varphi  \,d\|D^s u\| \dt,
\end{align*}
for any $\varphi\in C_0^\infty(\Omega_T)$ with $\varphi\geq 0$. Thus for a.e. $t\in (0,T)$ we have
\begin{align*}
    \int_{\Omega_T} \varphi (f(\cdot,\nabla u(t))+f^*(x,z(t))-z(t)\cdot\nabla u(t))\dx \leq  \|z\|_\infty \int_{\Omega} \varphi  \,d\|D^s u(t)\|,
\end{align*} 
for every $\varphi\in C_0^\infty(\Omega)$ with $\varphi\geq 0$. This implies the inequality 
\begin{equation*}
    (f(\cdot,\nabla u(t))+f^*(x,z(t))-z(t)\cdot\nabla u(t)) \mathcal{L}^n \leq \|z\|_\infty \|D^s u(t)\|,
\end{equation*}
as measures. As the two sides of the above inequality are mutually singular we find that 
\begin{equation*}
    (f(\cdot,\nabla u(t))+f^*(x,z(t))-z(t)\cdot\nabla u(t)) \mathcal{L}^n \leq 0,
\end{equation*}
as a measure, or equivalently
\begin{equation}
    \label{eq: inequality for subgradient}
    f(x,\nabla u)+f^*(x,z(x,t))\leq z(x,t)\cdot\nabla u(x,t),
\end{equation}
for a.e. $(x,t)\in\Omega_T$. Let $\xi\in\R^n$ and use the Fenchel inequality to estimate 
\begin{equation*}
    z(x,t)\cdot \xi\leq f(x,\xi)+f^*(x,z(x,t)).
\end{equation*}
Combining with \eqref{eq: inequality for subgradient} we arrive at
\begin{equation*}
    z(x,t)\cdot \left(\xi-\nabla u(x,t)\right)\leq f(x,\xi)-f(x,\nabla u),
\end{equation*}
for a.e. $(x,t)\in\Omega_T$, which shows that $z\in\partial_\xi f(x,\nabla u)$.
\end{remark}

\subsection{Differentiable functionals}
So far, we have shown that weak solutions are variational solutions. For differentiable functionals, we establish that variational solutions are weak solutions by differentiating the variational inequality in a suitable way.
\begin{proposition}
\label{prop: equivalence for differentiable functionals}
Suppose $f(x,\xi)$ is differentiable in $\xi$ for a.e. $x\in\Omega$ and every $\xi\in\R^n$, and $f^\infty(x,\xi)$ is differentiable in $\xi$ for all $(x,\xi)\in \Omega\times(\R^n\backslash\{0\})$, with $\lvert D_\xi f\rvert \leq M$ and $\lvert D_\xi f^\infty\rvert \leq M$ for some $M>0$. Then if $u$ is a variational solution of \eqref{PDE} it is also a weak solution, with $z=D_\xi f(x,\nabla u)$.
\end{proposition}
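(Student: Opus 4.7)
By \Cref{equivalences part 1}, it suffices to verify the intermediate condition (ii) of that theorem with $z := D_\xi f(x,\nabla u)$. The differentiability hypothesis together with $|D_\xi f|\leq M$ places $z$ in $L^\infty(\Omega_T,\R^n)$, and the Fenchel equality $z\cdot\nabla u = f(x,\nabla u)+f^*(x,z)$ gives $z\in\partial_\xi f(x,\nabla u)$ a.e.; the initial-value condition \eqref{initial value condition} is already supplied by \Cref{var sol initial value}. The core task is therefore to establish \eqref{intermediate condition} for an arbitrary admissible test function $v$.

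The strategy is to differentiate the variational inequality at a regularization of $u$ in a direction compatible with the singular part of $Du$. Let $w\in\BV(\Omega)\cap L^2(\Omega)$ approximate $u_0$ in $L^2(\Omega)$, let $u^\delta$ be the time mollification \eqref{time mollification}, and consider the convex combination
\begin{equation*}
v_h := (1-h)\,u^\delta + h\,v, \qquad h\in(0,1],
\end{equation*}
which is admissible in \eqref{var inequality}. Because $v$ is Sobolev, the increment $\beta:=Dv-Du^\delta$ satisfies $|\beta|^s=|(Du^\delta)^s|$, so \Cref{lemma: anzellotti derivative functional of measure} applies and, combined with Euler's identity $D_\xi f^\infty(x,\xi)\cdot\xi = f^\infty(x,\xi)$, yields
\begin{equation*}
\frac{d^+}{dh}\bigg|_{h=0} f(Dv_h)(\Omega) = \int_\Omega D_\xi f(x,\nabla u^\delta)(\nabla v-\nabla u^\delta)\dx - \int_\Omega f^\infty\!\Bigl(\cdot,\tfrac{dD^s u^\delta}{d|D^s u^\delta|}\Bigr)\,d|D^s u^\delta|.
\end{equation*}
Paired with convex expansions of the trace, time, and endpoint terms, this gives an asymptotic $\mathcal{F}(v_h,u)\leq \mathcal{F}(u^\delta,u)+hG_\delta+o(h)$, where $\mathcal{F}(\cdot,u)$ denotes the right-hand side of \eqref{var inequality} and $G_\delta$ is an explicit linear functional of $v-u^\delta$.

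Testing \eqref{var inequality} with $v_h$ and with $u^\delta$ produces $A\leq\mathcal{F}(v_h,u)$ and $A\leq\mathcal{F}(u^\delta,u)$, respectively, where $A$ is the left-hand side of \eqref{var inequality}. Using \Cref{lemma: time moll conv area strictly}, \Cref{reshetnyak continuity}, continuity of the trace under strict convergence, and \Cref{lemma for comp principle}, the slack $\epsilon_{\delta,w}:=\mathcal{F}(u^\delta,u)-A$ satisfies $\epsilon_{\delta,w}=O(\|w-u_0\|_{L^2}^2)+o_\delta(1)$. Subtracting, dividing by $h$, and sending $h\to 0^+$ jointly with $\delta\to 0$ and $w\to u_0$ along a scaling forcing $\epsilon_{\delta,w}/h\to 0$ then yields $\liminf G_\delta\geq 0$. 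Identifying the limit through $\nabla u^\delta\to\nabla u$ in $L^1(\Omega_T)$, continuity of $D_\xi f$ (so $D_\xi f(x,\nabla u^\delta)\to z$ in every $L^p$, $p<\infty$), Reshetnyak continuity for the singular term, and \Cref{mollification of trace terms} for the boundary contribution, produces exactly the inequality \eqref{intermediate condition} after reshaping the diagonal terms via $f(x,\nabla u)+f^*(x,z)=z\cdot\nabla u$.

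The hard part will be controlling the cross contribution $h\iint\partial_t u^\delta(v-u^\delta)\dx\dt$: since $\partial_t u^\delta=(u-u^\delta)/\delta$ has $L^2$ norm of order $\delta^{-1/2}$, one must delicately balance $h$, $\delta$, and $\|w-u_0\|_{L^2}$, exploiting the sharp bound of \Cref{lemma for comp principle} to extract a convergent limit (for instance, $\delta\sim\|w-u_0\|^3$ and $h\sim\|w-u_0\|^{7/4}$ simultaneously satisfies $h/\sqrt{\delta}\to 0$ and $\epsilon_{\delta,w}/h\to 0$). The one-sided derivative of the trace $|Tv_h-Tg|$ must analogously be handled through the convex chord bound $|Tv-Tg|-|Tu^\delta-Tg|$, whose limit is then provided by \Cref{mollification of trace terms}.
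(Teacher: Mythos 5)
Your overall strategy is a genuinely different route from the paper's: you aim directly for condition (ii) (\eqref{intermediate condition}) of \Cref{equivalences part 1} by perturbing $u^\delta$ toward a general Sobolev comparison $v$ and differentiating at $h=0$. The paper instead establishes \eqref{divergence condition} and \eqref{pairing condition} separately, each via a specific perturbation of $u^\delta$ (a multiplicative perturbation $u^\delta-h\varphi u^\delta$ for the pairing condition, and an additive-plus-boundary-adjusted perturbation for the divergence condition), and crucially sends $\delta\to 0$ \emph{before} dividing by $h$ and letting $h\to 0$. That order of limits is what makes the argument close: the area-strict convergence of $u^\delta$ combined with Reshetnyak continuity passes the whole functional $f(D(u^\delta - h\varphi u^\delta))(\Omega)$ to $f(D(u-h\varphi u))(\Omega)$, after which \Cref{lemma: anzellotti derivative functional of measure} is applied \emph{at} $u$, producing $D_\xi f(x,\nabla u)=z$ on the nose.

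Your reversed order (first $h\to 0$, then $\delta\to 0$ and $w\to u_0$ along a diagonal) creates a gap you do not close. Differentiating at $h=0$ produces $D_\xi f(x,\nabla u^\delta)$, and to identify the $\delta\to0$ limit you invoke ``$\nabla u^\delta\to\nabla u$ in $L^1(\Omega_T)$'' with a corresponding statement for $D_\xi f(x,\nabla u^\delta)$ and ``Reshetnyak continuity for the singular term.'' Neither is justified. The time mollification averages the full gradient measures $Du(s)$, and the Lebesgue decomposition does not commute with such averaging: a time-average of singular measures can acquire a nontrivial absolutely continuous part (already $\int_0^1\delta_s\,ds=\mathcal L^1$ in one dimension). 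Hence $\nabla u^\delta$ need not be the time-mollification of $\nabla u$, and its $L^1$-convergence to $\nabla u$ is not a consequence of the facts at hand; likewise the singular part of $f(Du^\delta)$ need not converge separately to the singular part of $f(Du)$ (Reshetnyak controls only the sum). The paper sidesteps all of this precisely by taking $\delta\to 0$ at the level of the functional rather than of its derivative. Separately, your treatment of $\iint\partial_t u^\delta(v-u^\delta)\dx\dt$ through an ad hoc scaling $\delta\sim\|w-u_0\|^3$, $h\sim\|w-u_0\|^{7/4}$ is not demonstrated to work; the paper handles the time term by an explicit integration-by-parts identity (cf. \eqref{eq: time derivative simplifies}) that produces a $\delta$-independent bound before any limit is taken. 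Finally, the one-sided derivative of $|Tv_h-Tg|$ at $h=0^+$ equals $|T v- T u^\delta|$ on $\{Tg=Tu^\delta\}$, not $\sign(Tg-Tu^\delta)(Tu^\delta-Tv)$; your chord bound must account for this asymmetry, which is exactly why the definition keeps the difference $|T\varphi+Tg-Tu|-|Tg-Tu|$ rather than its linearization. To make your route rigorous you would need to (a) reverse the order of limits and replace the pointwise identification of $D_\xi f(x,\nabla u^\delta)$ by a Reshetnyak-style argument for the full functional, and (b) close the time-derivative bookkeeping by integration by parts instead of scale matching.
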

\begin{proof}
We begin by showing \eqref{pairing condition}. Let $\varphi\in C_0^\infty(\Omega)$ and let $0<h<\frac{1}{\|\varphi\|_\infty}$. Let also $\delta>0$ and $\epsilon>0$. Denote $u^\delta$ for the time mollification of $u$ as in \eqref{time mollification} with $w\in \BV(\Omega)\cap L^2(\Omega)$. Applying $v=u^\delta -h\varphi u^\delta$ as test function in \eqref{var inequality} gives
\begin{equation}
\label{estimate for pairing cond}
\begin{split}
& \int_0^T f(Du)(\Omega)\dt + \iint_{\partial\Omega\times(0,T)}\lvert Tg-Tu\rvert f^\infty(\cdot, \nu_\Omega)\dH\dt\\
    & \leq  \int_0^T f (D(u^\delta-h\varphi u^\delta))(\Omega)\dt + \iint_{\partial\Omega\times(0,T)}\lvert Tg-Tu^\delta\rvert f^\infty(\cdot, \nu_\Omega)\dH\dt \\ 
    & \quad +\iint_{\Omega_T} \partial_t (u^\delta -h\varphi u^\delta) (u^\delta-h\varphi u^\delta-u)\dx\dt +\frac{1}{2}\|w-u_0\|^2_{L^2(\Omega)}.
\end{split}
\end{equation}
We want to let $\delta\to 0$ in the above. The term containing the time derivatives simplifies as in \eqref{eq: time derivative simplifies} to give 
\begin{align*}
    &\limsup_{\delta\to 0}\iint_{\Omega_T} \partial_t (u^\delta -h\varphi u^\delta) (u^\delta-h\varphi u^\delta-u)\dx\dt \leq \frac{h}{2}\iint_{\Omega_T} u^2\partial_t\varphi\dx\dt.
\end{align*}
By combining \Cref{lemma: time moll conv area strictly}, \Cref{lemma:area strict convergences} and \Cref{reshetnyak continuity} we have
\begin{equation*}
    \lim_{\delta\to 0}f(D(u^\delta-h\varphi u^\delta)(t))(\Omega)\to f(D(u-h\varphi u)(t))(\Omega),    
\end{equation*} 
after choosing a subsequence if necessary, for a.e. $t\in(0,T)$. Hence letting $\delta\to 0$ in \eqref{estimate for pairing cond} 
\begin{align*}
    & \int_0^T f(Du)(\Omega)\dt + \iint_{\partial\Omega\times(0,T)}\lvert Tg-Tu\rvert f^\infty(\cdot, \nu_\Omega)\dH\dt\\
    & \leq  \int_0^T f (D(u-h\varphi u))(\Omega)\dt + \iint_{\partial\Omega\times(0,T)}\lvert Tg-Tu^\delta\rvert f^\infty(\cdot, \nu_\Omega)\dH\dt \\ 
    & \quad +\frac{h}{2}\iint_{\Omega_T} u^2\partial_t\varphi\dx\dt +\frac{1}{2}\|w-u_0\|^2_{L^2(\Omega)}.
\end{align*}
Letting $w\to u_0$ in $L^2(\Omega)$ and dividing by $h$ gives
\begin{equation*}
    \int_0^T \frac{1}{h} f(D(u-h\varphi u))(\Omega)-f(Du)(\Omega)\dt\leq \frac{1}{2}\iint_{\Omega_T} u^2\partial_t\varphi\dx\dt.
\end{equation*}
We let $h\to 0$ and use \Cref{lemma: anzellotti derivative functional of measure} to conclude that
\begin{align*}
    \iint_{\Omega_T} D_\xi f(x,\nabla u)\cdot & \nabla (u\varphi) \dx\dt  + \iint_{\Omega_T} \varphi f^\infty\left(x,\dfrac{Du}{\lvert Du\rvert}\right)\,d\|D^s u\|\dt \\
    & \leq \frac{1}{2}\iint_{\Omega_{T}} u^2\partial_t \varphi \dx\dt.
\end{align*}
Arguing by linearity shows that
\begin{align*}
    &\iint_{\Omega_T} \varphi D_\xi f(x,\nabla u)\cdot \nabla u \dx\dt  + \iint_{\Omega_T} \varphi f^\infty\left(x,\dfrac{Du}{\lvert Du\rvert}\right)\,d\|D^s u\|\dt \\
    & = \frac{1}{2}\iint_{\Omega_{T}} u^2\partial_t \varphi \dx\dt  +\iint_{\Omega_T}u D_\xi f(x,\nabla u)\cdot \nabla\varphi\dx\dt.
\end{align*}
Denote $z=D_\xi f(x,\nabla u)$. We have $f^*(x,z)=z\cdot \nabla u -f(x,\nabla u)$ and thus we arrive at \eqref{pairing condition}.

Let us show \eqref{divergence condition}. Let $\varphi$ be a test function as in \eqref{divergence condition}. For $\epsilon>0$, let $\eta_\epsilon$ be as in the proof of \Cref{equivalences for boundary condition}. Let $\delta>0$, $w\in \BV(\Omega)\cap L^2(\Omega)$, $u^\delta$ be as in \eqref{time mollification} with $u^\delta(0)=w$, and let $g^\delta$ be the time mollification of $g$ with $g^\delta(0)=0$. Let $h>0$ and denote 
\begin{equation*}
    v_\epsilon=u^\delta-h\eta_\epsilon\varphi + h(1-\eta_\epsilon)(g^\delta-u^\delta).    
\end{equation*}
We will test \eqref{var inequality} with $v_\epsilon$ and then take $\epsilon\to 0$. Denote $v=u^\delta-h\varphi$, which clearly is the limit in $L^2(\Omega_T)$ of $v_\epsilon$ as $\epsilon\to 0$. Let us consider what happens to $f(Dv_\epsilon)$ as $\epsilon\to 0$. Similarly as in the proof of \Cref{equivalences for boundary condition}, for $t\in (0,T)$ define the $\R^n$ valued measures $\mu_\epsilon(t), \mu(t)$ by 
\begin{equation*}
    \mu_\epsilon(t)(B)=Dv_\epsilon(t)(B\cap \Omega),
\end{equation*}
and 
\begin{equation*}
    \mu(t)(B)= Dv(t)(B\cap \Omega) + \int_{B\cap\partial\Omega} h(T\varphi(t)+Tg^\delta(t) -Tu^\delta(t))\nu_\Omega\dH
\end{equation*}
for Borel sets $B\subset\overline{\Omega}$. Then as in the proof of \Cref{equivalences for boundary condition}, we use \Cref{cut off trace lemma}, to conclude  
\begin{equation}
    \label{area strict convergence of particular measures}
    \lim_{\epsilon\to 0}\lvert(\mu_\epsilon(t), \mathcal L^n)\rvert(\overline{\Omega})\to \lvert(\mu(t), \mathcal L^n)\rvert(\overline{\Omega}).
\end{equation}
Hence, applying \Cref{reshetnyak continuity}
\begin{align*}
    & \lim_{\epsilon\to 0}\int_0^T f(Dv_\epsilon)\dt = \int_0^T \lim_{\epsilon\to 0} f(\mu_\epsilon)\dt \\
    & =\int_0^T  f (Dv)\dt + \iint_{\partial\Omega\times(0,T)} h\lvert T\varphi+Tg^\delta-Tu^\delta\rvert f^\infty(\cdot, \nu_\Omega)\dH\dt.
\end{align*}
Thus testing \eqref{var inequality} with $v_\epsilon$ and letting $\epsilon\to 0$ gives
\begin{equation}
\label{var estimate for div cond}
\begin{split}
    & \int_0^T f(Du)(\Omega)\dt + \iint_{\partial\Omega\times(0,T)}\lvert Tg-Tu\rvert f^\infty(\cdot, \nu_\Omega)\dH\dt\\
    & \leq \limsup_{\epsilon \to 0} \Bigg( \int_{0}^T f (Dv_\epsilon)(\Omega)\dt  +\iint_{\Omega_T} \partial_t v_\epsilon (v_\epsilon-u)\dx\dt\Bigg) +\frac{1}{2}\|w-u_0\|^2_{L^2(\Omega)} \\
    & \quad + \iint_{\partial\Omega\times(0,T)}\lvert Tg-Tu^\delta +h(T g^\delta-T u^\delta)\rvert f^\infty(\cdot, \nu_\Omega)\dH\dt \\
    & =\int_{0}^T f (D(u^\delta-h\varphi))(\Omega)\dt  +\iint_{\Omega_T} \partial_t v (v-u)\dx\dt +\frac{1}{2}\|w-u_0\|^2_{L^2(\Omega)} \\
    & \quad + \iint_{\partial\Omega\times(0,T)}\lvert Tg-Tu^\delta +h(T g^\delta-T u^\delta)\rvert f^\infty(\cdot, \nu_\Omega)\dH\dt \\
    & \quad + \iint_{\partial\Omega\times(0,T)} h\lvert T\varphi+Tg^\delta-Tu^\delta\rvert f^\infty(\cdot, \nu_\Omega)\dH\dt. 
\end{split}
\end{equation}
For the time derivative we find  as in \eqref{eq: time derivative simplifies} that
\begin{align*}
    \limsup_{\delta\to 0} \iint_{\Omega_T} \partial_t (u^\delta-h\varphi)(u^\delta-h\varphi-u)\dx\dt \leq h\iint_{\Omega_T} u\partial_t\varphi\dx\dt.
\end{align*}
As in the previous part of the proof we have that $f(D(u^\delta-h\varphi))(\Omega)\to f(D(u-\varphi))(\Omega)$ as $\delta\to 0$, and hence we let $\delta\to 0$ in \eqref{var estimate for div cond} to conclude
\begin{align*}
    & \int_0^T f(Du)(\Omega)\dt + \iint_{\partial\Omega\times(0,T)}\lvert Tg-Tu\rvert f^\infty(\cdot, \nu_\Omega)\dH\dt\\
    & \leq \int_{0}^T f (D(u-h\varphi))(\Omega)\dt  +h\iint_{\Omega_T} u\partial_t\varphi\dx\dt +\frac{1}{2}\|w-u_0\|^2_{L^2(\Omega)} \\
    & \quad + (1-h)\iint_{\partial\Omega\times(0,T)}\lvert Tg-Tu\rvert f^\infty(\cdot, \nu_\Omega)\dH\dt \\
    & \quad + \iint_{\partial\Omega\times(0,T)} h\lvert T\varphi+Tg-Tu\rvert f^\infty(\cdot, \nu_\Omega)\dH\dt. 
\end{align*}
Taking $w\to u_0$ in $L^2(\Omega)$ and rearranging we have
\begin{align*}
    & -\int_0^T f(D(u-h\varphi)(\Omega)- f(Du)(\Omega)\dt- h\iint_{\Omega_T} u\partial_t\varphi\dx\dt \\
    &\leq h\iint_{\partial\Omega\times(0,T)} \lvert T\varphi+Tg-Tu\rvert f^\infty(\cdot, \nu_\Omega)\dH\dt \\
    & \quad -h\iint_{\partial\Omega\times(0,T)}\lvert Tg-Tu\rvert f^\infty(\cdot, \nu_\Omega)\dH\dt. 
\end{align*}
We divide the above estimate by $h$ and let $h\to 0$ to arrive at
\begin{align*}
    & \iint_{\Omega_T}D_\xi f(x,\nabla u)\cdot \nabla \varphi-u\partial_t\varphi\dx\dt \\
    &\leq \iint_{\partial\Omega\times(0,T)} \left( \lvert T\varphi+Tg-Tu\rvert -\lvert Tu-Tg\rvert \right) f^\infty(\cdot, \nu_\Omega)\dH\dt,
\end{align*}
which finishes the proof.
\end{proof}

\subsection{Anzellotti-type Euler equations}

In this subsection we study the weak formulation of \eqref{PDE} when $\partial_t u\in L^1(0,T;L^2(\Omega))$ and $f$ is differentiable with respect to the second argument. In this case, weak solutions can be characterized using an Euler-Lagrange equation in the style of \cite{Anzellotti:1985}.
\begin{proposition}
\label{prop: Anzellotti type euler equations}
Let $f$ be as in \Cref{prop: equivalence for differentiable functionals}, and suppose the boundary values satisfy $\partial_t g\in L^1(0,T;L^2(\Omega))$. Suppose $u\in L^1_{w^*}(0,T;\BV(\Omega))\cap L^\infty(0,T;L^2(\Omega))$ with $\partial_t u\in L^1(0,T;L^2(\Omega))$ and $u(0)=u_0$ in the sense of \eqref{initial value condition}. Then $u$ is a weak solution of \eqref{PDE} if and only if
\begin{equation}
\label{Euler-Lagrange with controlled jump}
\begin{split}
    & \iint_{\Omega_T} D_\xi f(x,\nabla u)\cdot \nabla \varphi\dx\dt + \iint_{\Omega_T} D_\xi f^\infty\left(x,\dfrac{Du}{\lvert Du\rvert}\right)\cdot \frac{D\varphi}{\lvert D\varphi\rvert}\,d \lvert D^s \varphi\rvert\dt \\
    & = \iint_{\Omega_T} u\partial_t\varphi\dx\dt+ \iint_{\partial\Omega\times(0,T)} \sign_0(Tg-Tu)T\varphi f^\infty(\cdot, \nu_\Omega) \dH\dt,
\end{split}
\end{equation}
for all $\varphi\in L^1_{w^*}(0,T;\BV(\Omega))$, compactly supported in time,  with $\partial_t \varphi\in L^1(0,T;L^2(\Omega))$ such that for a.e. $t$, we have $\|D^s \varphi(t)\| \ll \|D^s u(t)\|$ and for a.e. $t$, $T\varphi(t)(x)=0$ for $\mathcal{H}^{n-1}$ a.e. $x\in \{x\in\partial\Omega: Tu(t)(x)=Tg(t)(x)\}$.
\end{proposition}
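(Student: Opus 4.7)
The proof treats the two implications separately, with \Cref{equivalences for boundary condition} and the Gauss--Green formula \eqref{eq: Gauss Green in W11} for the Anzellotti pairing serving as the common bridge.

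($\Rightarrow$) Assume $u$ is a weak solution. The regularity $\partial_t u,\partial_t g\in L^1(0,T;L^2(\Omega))$ puts us in the setting of \Cref{equivalences for boundary condition}, which converts \eqref{divergence condition} into $\div z=\partial_t u$ in $\mathcal{D}'(\Omega_T)$ together with $[z(t),\nu_\Omega]=\sign(Tg(t)-Tu(t))f^\infty(\cdot,\nu_\Omega)$ for a.e.\ $t$ (note that $z\in\partial_\xi f(x,\nabla u)$ forces $z\cdot\xi\le f^\infty(x,\xi)$, as required to invoke that proposition). Integrating the term $\tfrac12 u^2\partial_t\phi$ in \eqref{pairing condition} by parts in time, valid thanks to $\partial_t u\in L^1(0,T;L^2)$, and invoking $\div z=\partial_t u$, rewrites the right-hand side of \eqref{pairing condition} as $\int_0^T(z(t),Du(t))(\phi(t))\dt$; a Fubini/density argument on $C_0^\infty(\Omega_T)$ test functions then yields, for a.e.\ $t$, the pointwise identification $(z(t),Du(t))=f(Du(t))+f^*(x,z(t))\dx$ as measures on $\Omega$, and in particular
\[
(z(t),Du(t))^s=f^\infty\!\Big(x,\tfrac{Du}{\lvert Du\rvert}\Big)\,d\lvert D^s u(t)\rvert.
\]
For the test $\varphi$, the compact support in time and $\partial_t\varphi\in L^1(0,T;L^2)$ give $\varphi(t)\in\BV(\Omega)\cap L^2(\Omega)$ for a.e.\ $t$, while $\div z(t)\in L^2(\Omega)$, so \eqref{eq: Gauss Green in W11} applied at each time yields
\[
(z(t),D\varphi(t))(\Omega)=-\int_\Omega \varphi(t)\,\partial_t u(t)\dx+\int_{\partial\Omega}\sign(Tg-Tu)\,T\varphi\,f^\infty(\cdot,\nu_\Omega)\dH.
\]
The absolutely continuous part of the pairing measure equals $D_\xi f(x,\nabla u)\cdot\nabla\varphi\,\mathcal L^n$. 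For the singular part, the identification of $(z(t),Du(t))^s$ together with $z(t)=D_\xi f(x,\nabla u(t))$ a.e.\ and the hypothesis $\|D^s\varphi(t)\|\ll\|D^s u(t)\|$ triggers Anzellotti's elliptic Euler--Lagrange identity from \cite{Anzellotti:1985}, the infinitesimal content of \Cref{lemma: anzellotti derivative functional of measure}, and delivers
\[
(z(t),D\varphi(t))^s=D_\xi f^\infty\!\Big(x,\tfrac{Du}{\lvert Du\rvert}\Big)\cdot \tfrac{D\varphi}{\lvert D\varphi\rvert}\,d\lvert D^s\varphi(t)\rvert.
\]
Integrating in $t$ and using $-\iint\varphi\,\partial_t u=\iint u\,\partial_t\varphi$, valid since $\varphi$ is compactly supported in time, produces \eqref{Euler-Lagrange with controlled jump}.

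($\Leftarrow$) Set $z:=D_\xi f(x,\nabla u)\in L^\infty(\Omega_T,\R^n)$; by differentiability of $f$ this is equivalent to $z\in\partial_\xi f(x,\nabla u)$, and the bound $\lvert D_\xi f\rvert\le M$ forces $z\cdot\xi\le f^\infty(x,\xi)$. The initial condition is hypothesized. For \eqref{pairing condition}, test \eqref{Euler-Lagrange with controlled jump} with $\varphi=u\phi$ for $\phi\in C_0^\infty(\Omega_T)$; this is admissible because $D^s(u\phi)=\phi D^s u$ gives $\|D^s(u\phi)\|\ll\|D^s u\|$, $T(u\phi)=\phi Tu$ vanishes on $\partial\Omega$, and $\partial_t(u\phi)\in L^1(0,T;L^2)$. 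Expanding $\nabla(u\phi)$, applying the Fenchel identity $z\cdot\nabla u=f(x,\nabla u)+f^*(x,z)$ on the absolutely continuous part, Euler's identity $D_\xi f^\infty(x,\eta)\cdot\eta=f^\infty(x,\eta)$ on the singular direction $\eta=dDu/d\lvert Du\rvert$ (licensed by $1$-homogeneity of $f^\infty$), and integrating $\iint u\phi\,\partial_t u=-\tfrac12\iint u^2\partial_t\phi$ by parts, reproduces exactly \eqref{pairing condition}. For \eqref{divergence condition}, test \eqref{Euler-Lagrange with controlled jump} with an arbitrary $\varphi\in L^1(0,T;W^{1,1}(\Omega))$ satisfying the trace constraint $T\varphi=0$ on $\{Tu=Tg\}$ and compactly supported in time with $\partial_t\varphi\in L^1(0,T;L^2)$; the singular integral drops out because $D^s\varphi=0$, so one recovers condition (ii) of \Cref{equivalences for boundary condition}, from which the proposition yields \eqref{divergence condition}.

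The delicate point is the pointwise-in-time identification of the singular part $(z(t),D\varphi(t))^s$ in the forward direction, which relies on the differentiability of $f$ and $f^\infty$, the hypothesis $\|D^s\varphi\|\ll\|D^s u\|$, and Anzellotti's elliptic Euler--Lagrange framework anchored in \Cref{lemma: anzellotti derivative functional of measure}; once this identification is secured, the parabolic aspects reduce to integration by parts in time.
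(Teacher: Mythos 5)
Your backward direction is correct and essentially mirrors the paper's argument: test \eqref{Euler-Lagrange with controlled jump} with Sobolev $\varphi$ to get condition (ii) of \Cref{equivalences for boundary condition}, hence \eqref{divergence condition}, and test with $u\phi$ for $\phi\in C_0^\infty(\Omega_T)$, invoking Euler's identity for $f^\infty$ and Fenchel's identity to recover \eqref{pairing condition}.

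The forward direction takes a genuinely different route from the paper's, and that route has a gap as written. The paper uses the fact that a weak solution is a variational solution, then tests \eqref{var inequality} with $v=u+h\varphi$ for small $h$, divides by $h$, and lets $h\to 0$ using \Cref{lemma: anzellotti derivative functional of measure}: the hypothesis $\|D^s\varphi\|\ll\|D^su\|$ guarantees differentiability of $h\mapsto f(D(u+h\varphi))(\Omega)$ at $h=0$, and the derivative formula is exactly the left-hand side of \eqref{Euler-Lagrange with controlled jump}; an $h<0$ comparison then upgrades the inequality to an equality. You instead try to identify the singular part $(z(t),D\varphi(t))^s$ of the Anzellotti pairing directly. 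The crucial step you invoke as ``Anzellotti's elliptic Euler--Lagrange identity, the infinitesimal content of \Cref{lemma: anzellotti derivative functional of measure}'' is not an established fact in the paper's toolkit: \Cref{lemma: anzellotti derivative functional of measure} is a statement about the derivative of the functional $\mu\mapsto f(\mu)$, not about the singular part of the pairing measure $(z,D\varphi)$; the pairing measure's singular part is only bounded, a priori, by $\|z\|_\infty\|D^s\varphi\|$, and the $L^\infty$ function $z$ does not by itself carry enough information on a Lebesgue-null set to determine $(z,D\varphi)^s$. Knowing $(z,Du)^s$ (from the pairing condition) is suggestive but not automatically sufficient, and no lemma in the paper supplies the transfer.

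The gap is fillable, but you must argue for it. One way is a supporting-line argument that effectively recovers the paper's proof in disguise: by Gauss--Green, $h\mapsto(z(t),D(u+h\varphi)(t))(\Omega)$ is affine; by the Fenchel inequality and the sublinearity $z\cdot\xi\le f^\infty(x,\xi)$ applied to absolutely continuous and singular parts separately, $(z(t),D(u+h\varphi)(t))(\Omega)\le f(D(u+h\varphi)(t))(\Omega)+\int_\Omega f^*(x,z(t))\,dx$ for all $h$, with equality at $h=0$ by the pairing condition; since the right-hand side is differentiable at $h=0$ by \Cref{lemma: anzellotti derivative functional of measure}, the affine function is tangent and the slopes match, which yields exactly the claimed identification of $(z(t),D\varphi(t))$. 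If you wish to keep your pairing-based presentation you should spell out this argument rather than cite a result that does not appear in the paper; otherwise the variational route the paper takes is more economical, since it invokes \Cref{lemma: anzellotti derivative functional of measure} once, applied to the quantity it was designed for.
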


\begin{proof}
    Let $u$ satisfy \eqref{Euler-Lagrange with controlled jump}. For $\varphi\in L^1(0,T;W^{1,1}(\Omega))$, \eqref{Euler-Lagrange with controlled jump} is the same as condition $(ii)$ from \Cref{equivalences for boundary condition}, for $z=D_\xi f(x,\nabla u)$. We have already seen that this implies \eqref{divergence condition}. To show \eqref{pairing condition}, let  $\varphi\in C_0^\infty(\Omega_T)$ and test \eqref{Euler-Lagrange with controlled jump} with $u\varphi$.

    Suppose $u$ is a variational solution. Let $\varphi$ be as in \eqref{Euler-Lagrange with controlled jump}. For $h>0$, apply $u+h\varphi$ as test function in \eqref{var inequality} to obtain
    \begin{align*}
        & \int_0^T  f(Du)(\Omega)\dt + \iint_{\partial\Omega\times(0,T)} \lvert Tu-Tg\rvert f^\infty(\cdot, \nu_\Omega)\,\dH\dt \\
        & \leq \int_0^T  f(D(u+h\varphi))(\Omega)\dt + \iint_{\partial\Omega\times(0,T)} \lvert Tu+hT\varphi-Tg\rvert f^\infty(\cdot, \nu_\Omega)\,\dH\dt \\
        & \quad + h \iint_{\Omega_T} \partial_t u \varphi\dx\dt
    \end{align*}
    Integrating by parts, rearranging and diving by $h$ we have
    \begin{align*}
        & \iint_{\Omega_T} u\partial_t \varphi \dx\dt - \iint_{\partial\Omega\times(0,T)} \frac{1}{h}\left(\lvert Tg-Tu-hT\varphi\rvert-\lvert Tg-Tu\rvert\right) f^\infty(\cdot, \nu_\Omega)\,\dH\dt. \\
        & \leq  \int_0^T \frac{1}{h}( f(D(u+h\varphi))(\Omega)- f(Du)(\Omega))\dt
    \end{align*}
    by the assumptions on $\varphi$ and \Cref{lemma: anzellotti derivative functional of measure} we may let $h\to 0$ to conclude
    \begin{align*}
        & \iint_{\Omega_T} u\partial_t \varphi \dx\dt +\iint_{\partial\Omega\times(0,T)} \sign_0(Tg-Tu) T\varphi f^\infty(\cdot, \nu_\Omega)\,\dH\dt. \\
        & \leq \iint_{\Omega_T} D_\xi f(x,\nabla u)\cdot \nabla \varphi\dx\dt + \iint_{\Omega_T} D_\xi f^\infty\left(x,\dfrac{Du}{\lvert Du\rvert}\right)\cdot \frac{D\varphi}{\lvert D\varphi\rvert}\,d \lvert D^s \varphi\rvert\dt.
    \end{align*}
    Finally, arguing by linearity shows \eqref{Euler-Lagrange with controlled jump}.
\end{proof}

\section{Stability and equivalence}
\label{section: stability}
In this section we finish the proof of the equivalence of weak and variational solutions to \eqref{PDE} for 1-homogeneous functionals $f$, by showing that variational solutions are weak solutions. Our approach is based on proving an existence result for weak solutions. To do this we study the regularized equation 
\begin{equation}
    \label{eq: regularized equation}
    \partial_t u - \div \left(D_\xi \sqrt{\mu^2+f(x,Du)^2}\right)=0,
\end{equation}
for which \Cref{prop: equivalence for differentiable functionals} is applicable. We show that weak solutions of \eqref{PDE} with functional $f$ are obtained as limits of solutions of \eqref{eq: regularized equation} when $\mu\to 0$. To conclude that the limit is also a weak solution, the formulation in \eqref{intermediate condition} plays a crucial role. 

We begin by showing the following elementary lemma on the lower semicontinuity of functionals in parabolic $\BV$ spaces.
\begin{lemma}
\label{lemma: lsc of functional on parabolic bv for weak conv}
Let $u_i\in L^1_{w^*}(0,T;\BV(\Omega))$, $i\in\mathbb N$, be a sequence with $u_i\rightharpoondown u$ weakly in $L^1(\Omega_T)$ as $i\to \infty$, for some $u\in L^1(\Omega_T)$. Then
\begin{equation}
    \label{eq: weak l1 lsc functional on parabolic bv space}
    \int_0^T f(Du)(\Omega)\dt \leq \liminf_{i\to \infty}\int_0^Tf(Du_i)(\Omega)\dt.
\end{equation}
\end{lemma}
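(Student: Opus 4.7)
I would prove the lemma via Fenchel duality: the functional $v\mapsto f(Dv)(\Omega)$ admits a representation as a supremum of affine functionals that are linear in $v$, and such functionals are continuous under weak $L^{1}$ convergence.

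\emph{Reduction.} Passing to a subsequence I may assume the liminf in \eqref{eq: weak l1 lsc functional on parabolic bv space} is a finite limit; otherwise the claim is trivial. The growth bound $\lambda|\xi|\leq f(x,\xi)$ then gives $\sup_i\int_0^T \|Du_i\|(\Omega)\dt<\infty$, and \Cref{low semcont of par BV} places $u$ in $L^1_{w^*}(0,T;\BV(\Omega))$, so that the left-hand side is well defined.

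\emph{Static duality.} Since $f(x,\cdot)$ is convex with linear growth bounded by $\Lambda$, the Fenchel conjugate $f^*(x,\cdot)$ is convex, l.s.c., and identically $+\infty$ outside the ball $\overline{B_\Lambda}\subset\R^n$; the Carath\'eodory property of $f$ is inherited by $f^*$. The classical duality for convex functionals of measures (see, e.g., \cite[Ch.~5]{AmbrosioFuscoPallara}) yields, for every $v\in\BV(\Omega)$,
\begin{equation*}
    f(Dv)(\Omega)=\sup_{\varphi\in\mathcal{A}}\left(-\int_\Omega v\,\div\varphi\dx-\int_\Omega f^*(x,\varphi)\dx\right),
\end{equation*}
where $\mathcal{A}=\{\varphi\in C_c^1(\Omega;\R^n):|\varphi|\leq\Lambda\}$.

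\emph{Parabolic lifting and conclusion.} For every $\varphi\in C_c^1(\Omega_T;\R^n)$ with $|\varphi|\leq\Lambda$, Fubini combined with the pointwise-in-$t$ inequality gives
\begin{equation*}
    -\iint_{\Omega_T} w\,\div\varphi\dx\dt-\iint_{\Omega_T} f^*(x,\varphi)\dx\dt\leq\int_0^T f(Dw)(\Omega)\dt
\end{equation*}
for any $w\in L^1_{w^*}(0,T;\BV(\Omega))$; a measurable-selection argument in the separable family $\mathcal{A}$, followed by approximation by simple functions in $t$ and a mollification in time, promotes this to an identity when $w=u$. Since $\div\varphi\in C_c(\Omega_T)\subset L^\infty(\Omega_T)$, the assumption $u_i\rightharpoondown u$ weakly in $L^1(\Omega_T)$ yields $\iint_{\Omega_T}u_i\,\div\varphi\dx\dt\to\iint_{\Omega_T}u\,\div\varphi\dx\dt$, hence
\begin{equation*}
    -\iint_{\Omega_T} u\,\div\varphi\dx\dt-\iint_{\Omega_T} f^*(x,\varphi)\dx\dt\leq\liminf_{i\to\infty}\int_0^T f(Du_i)(\Omega)\dt.
\end{equation*}
Taking the supremum over admissible $\varphi$ on the left and using the parabolic representation identity finishes the proof.

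\emph{Main obstacle.} The only non-routine step is the measurable-selection argument needed to upgrade the parabolic inequality into the identity that realises $\int_0^T f(Du)(\Omega)\dt$ as a supremum over $\varphi\in C_c^1(\Omega_T;B_\Lambda)$; the rest reduces to Fenchel duality and the elementary fact that weak $L^1$ convergence passes against $L^\infty$ functions.
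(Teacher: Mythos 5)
Your proposal is correct, but it takes a genuinely different route from the paper's. The paper argues via Mazur's lemma: from weak $L^1$ convergence one extracts a sequence of convex combinations $\tilde u_i$ converging strongly in $L^1(\Omega_T)$, passes to a further subsequence so that $\tilde u_i(t)\to u(t)$ in $L^1(\Omega)$ for a.e.\ $t$, uses convexity of $f$ to bound $\int_0^T f(D\tilde u_i)(\Omega)\dt$ by $\liminf_i \int_0^T f(Du_i)(\Omega)\dt$, and then concludes by the pointwise-in-time Reshetnyak lower semicontinuity (\Cref{lemma: reshetnyak lsc}) together with Fatou. You instead represent the functional via Fenchel duality as a supremum of affine functionals $v\mapsto -\int_\Omega v\,\div\varphi-\int_\Omega f^*(\cdot,\varphi)$, which are continuous under weak $L^1$ convergence since $\div\varphi\in L^\infty$. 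Both approaches are sound. Yours has a cleaner conceptual structure and in principle yields the lemma as an immediate consequence of weak-$L^1$/duality compatibility; however, it requires the parabolic duality identity realizing $\int_0^T f(Du)(\Omega)\dt$ as a supremum over space-time test fields, and the measurable-selection plus time-approximation argument you flag as ``the only non-routine step'' is indeed a substantive piece of work: one has to construct a measurable selection of near-optimal spatial test fields, pass to $t$-simple approximants, mollify in time, and verify (e.g.\ via Jensen's inequality for the convex integrand $f^*$) that the duality expression is not lost in the mollification. By contrast, the paper's Mazur-plus-Fatou argument avoids all parabolic duality machinery and reduces the problem entirely to the well-known pointwise-in-time lower semicontinuity, so it is more elementary and self-contained given the preliminaries already in place. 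One small caveat in your write-up: the correct admissibility constraint on $\varphi$ is $f^*(\cdot,\varphi)<\infty$ a.e.\ (equivalently $\varphi\cdot\xi\le f^\infty(\cdot,\xi)$ for all $\xi$), which is a proper subset of $|\varphi|\le\Lambda$; the looser constraint you impose is harmless since test fields violating it contribute $-\infty$ to the supremum, but it is worth stating precisely. Also, the citation to \cite{AmbrosioFuscoPallara} Ch.~5 for the duality representation is not quite the right place; the formula is closer to the Goffman--Serrin/Demengel--Temam/Bouchitt\'e--Dal Maso line of results.
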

\begin{proof}
Suppose the right hand side of \eqref{eq: weak l1 lsc functional on parabolic bv space} is finite. Then, after choosing a subsequence if necessary, we may assume that the limit
\begin{equation*}
    \lim_{i\to\infty} \int_0^Tf(Du_i)(\Omega)\dt,
\end{equation*}
exists. By Mazur's lemma we find a sequence of convex combinations
\begin{equation*}
    \Tilde u_i = \sum_{j=i}^{m_i} a_{i,j} u_j, \quad a_{i,j}\geq 0 \quad \text{and} \quad \sum_{j=i}^{m_i}a_{i,j}=1,
\end{equation*}
such that $\Tilde u_i\to u$ strongly in $L^1(\Omega_T)$ as $i\to\infty$. Moreover, choosing a subsequence if necessary, we may assume $\Tilde u_i(t)\to u(t)$ in $L^1(\Omega)$ as $i\to\infty$, for a.e. $t\in (0,T)$. Let $\epsilon>0$, for $i$ large enough we have 
\begin{equation*}
    \int_0^T f(Du_j)(\Omega)\dt \leq \lim_{k\to\infty} \int_0^T f(Du_k)(\Omega)\dt +\epsilon
\end{equation*}
for all $j\geq i$. Hence by convexity
\begin{equation*}
    \int_0^T f(D\Tilde u_i)(\Omega)\dt\leq \sum_{j=i}^{m_i} a_{i,j}\int_0^T f(Du_j)(\Omega)\dt \leq \lim_{k\to\infty} \int_0^T f(Du_k)(\Omega)\dt +\epsilon  
\end{equation*}
whenever $i$ is large enough. This shows that 
\begin{equation*}
    \liminf_{i\to\infty}\int_0^T f(D\Tilde u_i)(\Omega)\dt \leq \lim_{i\to\infty} \int_0^T f(Du_i)\dt.
\end{equation*}
As $\Tilde u_i(t)\to u(t)$ in $L^1(\Omega)$ as $i\to\infty$, we may apply \Cref{lemma: reshetnyak lsc}, for a.e. $t\in (0,T)$, to conclude
\begin{align*}
    \int_0^T f(Du)(\Omega)\dt &  \leq \int_0^T\liminf_{i\to\infty} f(D \Tilde u_i)(\Omega)\dt \\
    & \leq \liminf_{i\to\infty}\int_0^T f(D\Tilde u_i)(\Omega)\dt \\
    &\leq\lim_{i\to\infty}\int_0^T f(Du_i)(\Omega)\dt.
\end{align*}
\end{proof}
\begin{remark}
\label{remarK: lsc of functional with trace on par bv}
\Cref{lemma: lsc of functional on parabolic bv for weak conv} implies the lower semicontinuity of functionals of the type 
\begin{equation*}
    \int_0^T f(Du)(\Omega)\dt + \iint_{\partial\Omega\times(0,T)}\lvert Tu-Tg\rvert f^\infty(\cdot,\nu_\Omega)\dt,
\end{equation*}
by considering an extension of $g$ to $\R^n \backslash (0,T)$, and applying \Cref{lemma: derivative of piecewise def function}. For the properties of the extension operator $E:\BV(\Omega)\to\BV(\R^n)$ on parabolic spaces we refer to \cite[Lemma 2.3]{BoegelDuzSchevTime:2016}.
\end{remark}

We state and prove a general existence theorem for variational solutions to \eqref{PDE}. The main novelty is the general assumptions on the initial and boundary values, that is, $g\in L^1_{w^*}(0,T;\BV(\Omega))$ and $u_0\in L^2(\Omega)$.
\begin{theorem}
\label{thm: existence theorem}
    There exists a variational solution to \eqref{PDE}.
\end{theorem}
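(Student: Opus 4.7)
The plan is to construct a variational solution via a Rothe-type time-discrete minimization scheme, first under regularity assumptions on the data that are then removed by an approximation argument based on the comparison principle.

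Assume initially $u_0\in\BV(\Omega)\cap L^2(\Omega)$ and that $g$ is smooth in time (for instance obtained from a time mollification as in \Cref{lemma: time moll conv area strictly}). For $N\in\mathbb{N}$, set $h=T/N$ and $g_k=g(kh)$, and define iteratively $u_0^h=u_0$ together with
\begin{equation*}
    u_k^h\in\arg\min_{v\in\BV(\Omega)\cap L^2(\Omega)}\Big\{ h\,f(Dv)(\Omega) + h\!\int_{\partial\Omega}\lvert Tv-Tg_k\rvert f^\infty(\cdot,\nu_\Omega)\dH + \tfrac12\lVert v-u_{k-1}^h\rVert_{L^2(\Omega)}^2\Big\}.
\end{equation*}
A minimizer exists by the direct method: coercivity comes from the $L^2$-penalty and \eqref{linear growth}, and lower semicontinuity of the bulk and boundary terms along $L^1$-convergent sequences follows from \Cref{lemma: reshetnyak lsc} after using \Cref{lemma: derivative of piecewise def function} to recast the boundary penalty as a total variation on an extension across $\partial\Omega$. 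Testing the $k$-th problem with $v=u_{k-1}^h$, together with the triangle inequality $\lvert Tu_{k-1}^h-Tg_k\rvert\leq\lvert Tu_{k-1}^h-Tg_{k-1}\rvert+\lvert Tg_k-Tg_{k-1}\rvert$, produces after summation uniform-in-$h$ bounds on $\sum_k\tfrac{1}{2h}\lVert u_k^h-u_{k-1}^h\rVert_{L^2}^2$, on $\sup_k f(Du_k^h)(\Omega)$, and on the boundary penalties. Letting $u^h$ and $\tilde u^h$ denote the piecewise constant and piecewise affine interpolants, these estimates together with \Cref{low semcont of par BV} yield, along a subsequence, $u^h\to u$ in $L^1(\Omega_T)$ with $u\in L^1_{w^*}(0,T;\BV(\Omega))\cap L^\infty(0,T;L^2(\Omega))$.

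For a competitor $v$ as in the variational inequality, testing the $k$-th minimization with $v(kh)$ gives
\begin{equation*}
    hF_k(u_k^h)+\tfrac12\lVert u_k^h-u_{k-1}^h\rVert_{L^2}^2 \leq hF_k(v(kh))+\tfrac12\lVert v(kh)-u_{k-1}^h\rVert_{L^2}^2,
\end{equation*}
where $F_k(w)=f(Dw)(\Omega)+\int_{\partial\Omega}\lvert Tw-Tg_k\rvert f^\infty\dH$. Expanding the right-hand quadratic using $v(kh)-v((k-1)h)=h\partial_t v(kh)+o_{L^2}(h)$, summing from $k=1$ to $\lfloor\tau/h\rfloor$, and sending $h\to 0$ reproduces \eqref{var inequality}; lower semicontinuity of the left-hand side is supplied by \Cref{lemma: lsc of functional on parabolic bv for weak conv} and \Cref{remarK: lsc of functional with trace on par bv}, and the other terms pass by weak convergence in $L^2$ and dominated convergence. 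To remove the regularity assumptions, approximate $u_0\in L^2$ by $u_{0,j}\in\BV\cap L^2$ and $g\in L^1_{w^*}(0,T;\BV)$ by time mollifications $g_j$; the comparison principle (\Cref{comparison principle}) shows that the variational solutions $u_j$ just constructed form a Cauchy sequence in $L^\infty(0,T;L^2(\Omega))$, and the uniform energy bounds combined with \Cref{low semcont of par BV} place the limit $u$ in $L^1_{w^*}(0,T;\BV(\Omega))$ and propagate \eqref{var inequality} to it.

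The main obstacle is passing to the limit in the boundary-trace penalty $\int_{\partial\Omega}\lvert Tu-Tg\rvert f^\infty\dH$, both in the Rothe limit $h\to 0$ and in the approximation step $j\to\infty$, since traces on $\BV$ are continuous only under strict convergence while our compactness delivers only $L^1$-convergence. The resolution is to interpret this term, via \Cref{lemma: derivative of piecewise def function}, as part of a relaxed total-variation-type energy of the extension of $u$ across $\partial\Omega$ by $g$; its lower semicontinuity under $L^1$-convergence is then delivered by \Cref{lemma: lsc of functional on parabolic bv for weak conv} in the form recorded in \Cref{remarK: lsc of functional with trace on par bv}.
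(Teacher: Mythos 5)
Your construction via a Rothe-type implicit time discretization is a genuinely different route from the paper's. The paper treats the building block as given: it invokes the existence theorem of \cite{BDSS:2019} (which requires $u_0\in L^2(\Omega)$, $g\in L^2(0,T;W^{1,2}(\Omega))\cap C([0,T];L^2(\Omega))$ with $\partial_t g\in L^2(\Omega_T)$ and $g(0)=u_0$) and only supplies the reduction from general $(u_0,g)$ to such data. You instead build the regular solutions from scratch via sequential minimization, which, if carried out, would make the proof self-contained; the cost is that several nontrivial steps are only sketched (coercivity and lower semicontinuity at each time step, strong $L^1(\Omega_T)$ compactness of the interpolants, the discrete-to-continuous passage in the variational inequality including the boundary term), and these require the same Reshetnyak/extension machinery you invoke at the end.

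There is, however, a concrete gap in your final approximation step. You claim that the comparison principle, \Cref{comparison principle}, shows the approximating solutions $u_j$ form a Cauchy sequence in $L^\infty(0,T;L^2(\Omega))$. It does not. The lemma gives the one-sided estimate
\begin{equation*}
    \|(u-v)_+(\tau)\|_{L^2(\Omega)}^2 \le \|(u_0-v_0)_+\|_{L^2(\Omega)}^2
\end{equation*}
only under the ordering hypothesis $g\le h$ on $(\R^n\setminus\Omega)\times(0,T)$. Taking $g=h$ and $u_0=v_0$ this yields uniqueness, but it is not a stability estimate: for two arbitrary approximating data $(u_{0,j},g_j)$ and $(u_{0,m},g_m)$ the boundary data are not ordered, and even after sandwiching with $g_j\wedge g_m$ and $g_j\vee g_m$ the lemma only gives $u^\wedge\le u_j,u_m\le u^\vee$ and the trivial bound $\|(u^\wedge-u^\vee)_+\|=0$, not a bound on $\|u^\vee-u^\wedge\|$. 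So the Cauchy property does not follow. The paper avoids this issue entirely: it derives uniform a priori bounds by testing the variational inequality for $u_i$ with a fixed comparison map $v$, extracts a weakly*-convergent subsequence in $L^\infty(0,T;L^2(\Omega))$, places the limit in $L^1_{w^*}(0,T;\BV(\Omega))$ via \Cref{low semcont of par BV}, integrates the variational inequality over $(\tau,\tau+h)$, passes to the liminf on the left using \Cref{lemma: lsc of functional on parabolic bv for weak conv} and \Cref{remarK: lsc of functional with trace on par bv} (this is also the correct mechanism for the boundary trace term, exactly as you observe), and finally differentiates in $h$. Your last step should be replaced by this weak compactness plus lower semicontinuity argument; no Cauchy property and no comparison principle are needed for existence.
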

\begin{proof}
By \cite[Theorem 1.7]{BDSS:2019} there exists a solution to \eqref{PDE} if we require that $u_0\in L^2(\Omega)$ and $g\in L^2(0,T;W^{1,2}(\Omega))\cap C([0,T],L^2(\Omega))$, with $\partial_t g\in L^2(\Omega_T)$ and $g(0)=u_0$. We will upgrade this result using a similar technique as in the proof of \cite[Theorem 1.2]{BoegelDuzSchevTime:2016}, however crucially we do not test the variational inequality with the boundary value function. 
     
For $g\in L^1_{w^*}(0,T;\BV(\Omega))$, applying \Cref{lemma: strict approx in parabolic bv} we find $\Tilde g \in L^1(0,T;W^{1,1}(\Omega))$ with $Tg(t)=T\Tilde{g}(t)$ $\mathcal H^{n-1}$-a.e., for a.e. $t\in (0,T)$, and hence we may assume that $g\in L^1(0,T;W^{1,1}(\Omega))$. Let $g_i\in L^2(0,T;W^{1,2})\cap C([0,T],L^2(\Omega))$, $i\in\mathbb N$, be a sequence with $\partial_t g_i\in L^2(\Omega_T)$ such that $g_i\to g$ in $L^1(0,T;W^{1,1}(\Omega))$ as $i\to\infty$. To obtain the existence of such a sequence, truncate $g$ for boundedness, apply a standard mollification in time to conclude continuity in time, and finally apply a standard mollification with respect to the spatial variables.

To use the result of \cite{BDSS:2019} we modify $g_i$ and $u_0$ so that $g(0)=u_0$ holds. Let $u_{0,\epsilon}=u_0*\varphi_\epsilon$ denote the convolution of $u_0$ with the standard mollifier for the parameter $\epsilon>0$. Note that
\begin{equation*}
    \|Du_{0,\epsilon}\|(\Omega)\leq \frac{c}{\epsilon}\|u_0\|_{L^1(\Omega)}.
\end{equation*}
For $i\in\mathbb N$, let 
\begin{equation*}
    \zeta_i(t)
        =\begin{cases}
            \frac{1}{i}(\frac{1}{i}-t) ,&\quad 0<t < \frac{1}{i},\\
            0 ,&\quad t\geq \frac{1}{i},\\
        \end{cases}
\end{equation*}
$w_i = u_{0,1/\sqrt{i}}$, and $h_i(t)=\zeta_i(t) w_i +(1-\zeta_i(t))g_i(t)$. Then $h_i(0)=w_i$, $h_i\to g$ in $L^1(0,T;W^{1,1}(\Omega))$ as $i\to\infty$ and $w_i\to u_0$ in $L^2(\Omega)$ as $i\to\infty$. Let $u_i$ be the variational solution of \eqref{PDE} with initial and boundary data $(w_i,h_i)$. Let $v$ be any comparison map as in \eqref{var inequality}. Then for a.e. $\tau\in (0,T)$
\begin{equation}
\label{eq: existence stability estimate 1}
\begin{split}
    & \int_0^{\tau} f(Du_i)(\Omega)\dt + \iint_{\partial\Omega\times(0,\tau)} \lvert Tu_i-Th_i\rvert f^\infty(\cdot, \nu_\Omega) \dH\dt + \frac{1}{2}\|(v-u_i)(\tau)\|^2_{L^2(\Omega)} \\
    & \leq \int_0^\tau f(Dv)(\Omega)\dt + \iint_{\partial\Omega\times(0,\tau)}\lvert Tv-Th_i\rvert f^\infty(\cdot, \nu_\Omega)\dH\dt \\
    & \quad + \iint_{\Omega_\tau} \partial_t v (v-u_i)\dx\dt +\frac{1}{2}\|v(0)-w_i\|^2_{L^2(\Omega)} \\
    & \leq T \iint_{\Omega_T} \lvert \partial_t v\rvert^2 \dx\dt + c\int_{0}^{T} f(Dv)(\Omega)+ \|Dv\|(\Omega)+\|Dh_i\|(\Omega)\dt\\
    & \quad +\frac{1}{2}\|v(0)-w_i\|^2_{L^2(\Omega)} + \esssup_{0<\tau<T} \frac{1}{4}\|(v-u_i)(\tau)\|^2_{L^2(\Omega)}.
\end{split}
\end{equation}
Here we used Young's inequality and the boundedness of the trace operator. Taking essential supremum over $\tau$ in the left hand side and absorbing we conclude that
\begin{equation}
\label{existence stability estimate 2}
\begin{split}
    & \esssup_{0<\tau<T}\|(v-u_i)(\tau)\|^2_{L^2(\Omega)} \\
    & \leq c \iint_{\Omega_\tau} \lvert \partial_t v\rvert^2 \dx\dt + c\int_{0}^{T} f(Dv)(\Omega)+ \|Dv\|(\Omega)+\|Dh_i\|(\Omega)\dt\\
    & \quad\quad +c\|v(0)-w_i\|^2_{L^2(\Omega)}.
\end{split}
\end{equation}
As $h_i$ converges strictly to $g$,  \eqref{existence stability estimate 2} implies $u_i$ is a bounded sequence in $L^\infty(0,T;L^2(\Omega))$ and \eqref{eq: existence stability estimate 1} thus gives
\begin{equation*}
    \liminf_{i\to\infty} \int_0^T\|Du_i(t)\|(\Omega)\dt <\infty.
\end{equation*}
Hence, choosing a subsequence if necessary, still denoted by $u_i$, there exists a function $u\in L^\infty(0,T;L^2(\Omega))$ such that $u_i\wstar u$, and \Cref{low semcont of par BV} implies $u\in L^1_{w^*}(0,T;\BV(\Omega))$. Let us show $u$ is a variational solution to \eqref{PDE}. For this we need an averaged form of \eqref{var inequality}. Let $v$ be a comparison map and $\tau\in (0,T)$. Integrating \eqref{var inequality} over $(\tau,\tau+h)$ for $h>0$ gives
\begin{align*}
    &  \int_0^{\tau} f(Du_i)(\Omega)\dt + \iint_{\partial\Omega\times(0,\tau)} \lvert Tu_i-Th_i\rvert f^\infty(\cdot, \nu_\Omega)\dH\dt \\
    & \quad + \frac{1}{h}\int_\tau^{\tau+h} \frac{1}{2}\|(v-u_i)(s)\|^2_{L^2(\Omega)}\ds \\
    & \leq  \int_{0}^{\tau+h} f(Dv)(\Omega)\dt  + \iint_{\partial\Omega\times(0,\tau+h)} \lvert Tv-Th_i\rvert f^\infty(\cdot, \nu_\Omega)\dH\dt \\
    & \quad +\frac{1}{h}\int_\tau^{\tau+h} \iint_{\Omega_s} \partial_t v (v-u_i)\dx\dt \ds+\frac{1}{2}\|v(0)-w_i\|^2_{L^2(\Omega)}.
\end{align*}
Applying \Cref{lemma: lsc of functional on parabolic bv for weak conv} and arguing as in \Cref{remarK: lsc of functional with trace on par bv}, we find that 
\begin{align*}
    & \int_0^{\tau} f(Du)(\Omega)\dt + \iint_{\partial\Omega\times(0,\tau)} \lvert Tu-Tg\rvert f^\infty(\cdot, \nu_\Omega)\dH\dt \\
    & \leq \liminf_{i\to \infty} \left(\int_0^{\tau} f(Du_i)(\Omega)\dt + \iint_{\partial\Omega\times(0,\tau)} \lvert Tu_i-Th_i\rvert f^\infty(\cdot, \nu_\Omega)\dH\dt\right).
\end{align*}
Hence, using the lower semicontinuity of the $L^2$ norm on $\Omega\times(\tau,\tau+h)$ we have
    \begin{align*}
        & \int_0^{\tau} f(Du)(\Omega)\dt + \iint_{\partial\Omega\times(0,\tau)} \lvert Tu-Tg\rvert f^\infty(\cdot, \nu_\Omega)\dH\dt \\ 
        & \quad +\frac{1}{h}\int_\tau^{\tau+h} \frac{1}{2}\|(v-u)(s)\|^2_{L^2(\Omega)}\ds \\
        & \leq \liminf_{i\to \infty} \left(\int_0^{\tau} f(Du_i)(\Omega)\dt + \iint_{\partial\Omega\times(0,\tau)} \lvert Tu_i-Th_i\rvert f^\infty(\cdot, \nu_\Omega)\dH\dt\right) \\
        & \quad +  \liminf_{i\to \infty} \frac{1}{h}\int_\tau^{\tau+h} \frac{1}{2}\|(v-u_i)(s)\|^2_{L^2(\Omega)}\ds \\
        & \leq \int_{0}^{\tau+h} f(Dv)(\Omega)\dt\ + \iint_{\partial\Omega\times(0,\tau+h)} \lvert Tv-Tg\rvert f^\infty(\cdot, \nu_\Omega)\dH\dt \\
        & \quad + \frac{1}{h}\int_\tau^{\tau+h}\iint_{\Omega_s} \partial_t v (v-u)\dx\dt\ds+\frac{1}{2}\|v(0)-u_{0}\|^2_{L^2(\Omega)}.
\end{align*}
Letting $h\to 0$ the Lebesgue differentiation theorem shows \eqref{var inequality} for a.e. $\tau\in(0,T)$.
\end{proof}
The following is our stability theorem for weak solutions under an area-type approximation of $1$-homogeneous functionals. The assumption that $f$ is $1$-homogeneous is necessary in our proof. However, it is unknown whether the corresponding result holds in general.
\begin{theorem}
\label{stability theorem}
Suppose $f$ is $1$-homogeneous and $D_\xi f$ exists for all $(x,\xi)\in \Omega\times(\R^n\backslash \{0\})$ such that for some $M>0$ we have $\lvert D_\xi f(x,\xi)\rvert \leq M$ for all $(x,\xi)\in \Omega\times(\R^n\backslash \{0\})$. For $\mu>0$ denote $f_\mu(x,\xi)=\sqrt{\mu^2+f(x,\xi)^2}$. Let $u_i$, $i\in\mathbb N$, be a sequence of weak solutions to
\begin{equation*}
\begin{cases}
    \partial_t u-\div\bigg( D_\xi f_{\mu_i}(x,Du) \bigg)=0&\mbox{in }\Omega\times(0,T),\\
       u(x,t)=g(x,t)&\mbox{on }\partial\Omega\times(0,T),\\
       u(x,0)=u_o(x)&\mbox{in }\Omega,
\end{cases}
\end{equation*} 
where $\mu_i\to 0$ as $i\to\infty$. Then, there exists  $u\in L^1_{w^*}(0,T;\BV(\Omega))\cap L^\infty(0,T;L^2(\Omega))$ such that $u_{\mu_i}\wstar u$ weakly* in $L^\infty(0,T;L^2(\Omega))$ as $i\to\infty$. The function $u$ is a weak solution of \eqref{PDE}, for the functional $f$ and the data $(u_0,g)$. 
\end{theorem}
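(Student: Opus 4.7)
The plan is to extract subsequential weak-$*$ limits of the regularized solutions $u_i$ and their associated vector fields $z_i$, and to verify that the limit pair $(u,z)$ satisfies the intermediate characterization \eqref{intermediate condition} for the functional $f$; the conclusion then follows from \Cref{equivalences part 1}. Since every $u_i$ is a weak, hence by \Cref{equivalences part 1} also a variational, solution for $f_{\mu_i}$, testing its variational inequality with a compactly-supported-in-time cut-off of a spatial mollification of $u_0$ (in the spirit of \Cref{var sol initial value} and \Cref{thm: existence theorem}) produces uniform bounds on $u_i$ in $L^\infty(0,T;L^2(\Omega))$ and on $\int_0^T\|Du_i(t)\|(\Omega)\dt$, using the uniform growth $f_{\mu_i}(x,\xi)\leq \mu_i+\Lambda|\xi|$. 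The vector fields $z_i=D_\xi f_{\mu_i}(x,\nabla u_i)$ are uniformly bounded by $M$ in $L^\infty$. Passing to subsequences yields $u_i\wstar u$ in $L^\infty(0,T;L^2(\Omega))$ and $z_i\wstar z$ in $L^\infty(\Omega_T;\R^n)$, with $u\in L^1_{w^*}(0,T;\BV(\Omega))$ by \Cref{low semcont of par BV}.

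\textbf{Identifying the subdifferential inclusion.} Direct calculation shows $f_{\mu_i}^\infty=f$ and $f(Du_i)(\Omega)\leq f_{\mu_i}(Du_i)(\Omega)\leq f(Du_i)(\Omega)+\mu_i|\Omega|$. Since $f$ is $1$-homogeneous, convex and differentiable away from the origin, Euler's identity gives $D_\xi f(x,\xi)\in K(x):=\{w\in\R^n:w\cdot\eta\leq f(x,\eta)\ \text{for all}\ \eta\in\R^n\}$, and $f^*(x,\cdot)$ equals the indicator of $K(x)$. The explicit formula $z_i=\tfrac{f(x,\nabla u_i)}{f_{\mu_i}(x,\nabla u_i)}D_\xi f(x,\nabla u_i)$ together with the star-shapedness of $K(x)$ at $0$ gives $z_i(x,t)\in K(x)$ pointwise a.e. Testing the weak-$*$ convergence against $\varphi\eta$, with $\varphi\in C_0^\infty(\Omega_T)$ non-negative and $\eta$ ranging over a countable dense subset of $\R^n$, and using the continuity of $f(x,\cdot)$, gives $z(x,t)\in K(x)$ a.e. In particular $f^*(\cdot,z)\equiv 0$ a.e., and by \Cref{remark on subgradient in def of sol} this yields $z\in\partial_\xi f(x,\nabla u)$ a.e.

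\textbf{Passing to the limit in \eqref{intermediate condition}.} Each $u_i$ satisfies \eqref{intermediate condition} with $(f,z)$ replaced by $(f_{\mu_i},z_i)$. Discarding the non-negative $f_{\mu_i}^*$ term on the left and using $f_{\mu_i}(Du_i)\geq f(Du_i)$, $f_{\mu_i}^\infty=f$, together with \Cref{lemma: lsc of functional on parabolic bv for weak conv} and \Cref{remarK: lsc of functional with trace on par bv} (the latter applied via a $\BV$ extension of $g$ across $\partial\Omega$ based on \Cref{lemma: derivative of piecewise def function}), one obtains $\liminf_i \mathrm{LHS}_i\geq \mathrm{LHS}(u)$ for the corresponding expression with $(f,z)$. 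Integrating the inequality over $(\tau,\tau+h)$ and examining the right-hand side: $\iint z_i\cdot\nabla v$ converges by weak-$*$ convergence, the boundary trace term is independent of $i$ since $f_{\mu_i}^\infty=f$, the term $\iint\partial_t v(v-u_i)$ converges because $u_i\wstar u$ in $L^\infty(0,T;L^2(\Omega))\subset(L^1(0,T;L^2(\Omega)))^*$, and $\tfrac{1}{h}\int_\tau^{\tau+h}\|v-u_i\|_{L^2(\Omega)}^2\ds$ is weakly lower semicontinuous on $\Omega\times(\tau,\tau+h)$. Combining and letting $h\to 0$ by Lebesgue differentiation yields \eqref{intermediate condition} for $(u,z)$ with functional $f$ at a.e. $\tau\in(0,T)$. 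The initial condition \eqref{initial value condition} is then recovered as in \Cref{var sol initial value}.

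\textbf{Main obstacle.} The most delicate step is the lower semicontinuity of the full left-hand side including the boundary trace $\iint_{\partial\Omega\times(0,\tau)}|Tu_i-Tg|f^\infty(\cdot,\nu_\Omega)\dH\dt$, since the trace operator is only continuous with respect to strict convergence in $\BV$; this is circumvented by treating the surface term as part of a single functional on a $\BV$ extension across $\partial\Omega$. A parallel subtlety is the identification $f^*(\cdot,z)\in L^1$ with Fenchel equality in the limit: the $1$-homogeneity of $f$ reduces this to the convex-set inclusion $z\in K(x)$ above, but the argument is specific to this setting and would require substantially more work for general Lagrangians.
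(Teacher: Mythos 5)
Your proposal follows the same route as the paper: uniform bounds from the variational inequality, weak-$*$ compactness of $u_i$ and $z_i$, verification that $f^*(\cdot,z)=0$ a.e., passage to the limit in \eqref{intermediate condition} via averaging over $(\tau,\tau+h)$ and Lebesgue differentiation, and finally \Cref{remark on subgradient in def of sol} to recover $z\in\partial_\xi f(x,\nabla u)$. The small variations are cosmetic: you derive $z_i(x,t)\in K(x)$ from the explicit formula for $D_\xi f_{\mu_i}$ together with $0\in K(x)$ and convexity, whereas the paper notes $f_{\mu_i}^*(x,z_i)<\infty$ and $f_{\mu_i}^\infty=f$ directly; you pass this information to the limit by testing against $\varphi\eta$ with $\varphi\geq 0$ rather than by Lebesgue differentiation on cubes as in the paper. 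Both are fine and essentially interchangeable.

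One claim in the proposal is, however, incorrect as stated: the term $\iint f_{\mu_i}^*(x,z_i)\dx\dt$ on the left-hand side of \eqref{intermediate condition} is \emph{non-positive}, not non-negative. For $z\in K(x)$ one has $z\cdot\xi-f_{\mu}(x,\xi)\leq z\cdot\xi-f(x,\xi)\leq 0$, so $f_\mu^*(x,z)\leq 0$, with the lower bound $f_\mu^*(x,z)\geq -\mu$ obtained at $\xi=0$. You therefore cannot simply discard this term from the left-hand side — doing so makes the left-hand side larger, which is the wrong direction for a lower-semicontinuity passage. The correct fix is exactly the paper's estimate $-\mu_i\leq f_{\mu_i}^*(x,z_i)\leq 0$: bound the term below by $-\mu_i\lvert\Omega_\tau\rvert$, equivalently add $\mu_i\lvert\Omega_T\rvert$ to both sides of the integrated inequality, and observe that this error vanishes as $\mu_i\to 0$. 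With this correction the rest of your argument goes through unchanged.
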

\begin{proof}
Let $\mu_i$, $u_i$, and $f_{i}=f_{\mu_i}$ be as in the statement. Let $v$ be any admissible test function in the definition of variational solution. Then for a.e. $\tau\in (0,T)$
\begin{equation}
\label{stability estimate 1}
\begin{split}
    & \int_0^{\tau} f_i(Du_i)\dt + \iint_{\partial\Omega\times(0,T)} \lvert Tu_i-Tg\rvert\,d\mathcal{H}^{n-1}\dt + \frac{1}{2}\|(v-u_i)(\tau)\|^2_{L^2(\Omega)} \\
    & \leq \iint_{\Omega_\tau} \partial_t v (v-u_i)\dx\dt + \int_{0}^{\tau} f_i(Dv)\dt + \iint_{\partial\Omega\times(0,\tau)} \lvert Tv-Tg\rvert\,d\mathcal{H}^{n-1}\dt\\
    & \quad +\frac{1}{2}\|v(0)-u_0\|^2_{L^2(\Omega)} \\
    & \leq T \iint_{\Omega_T} \lvert \partial_t v\rvert^2 \dx\dt + \int_{0}^{T} f_i(Dv)\dt + \iint_{\partial\Omega\times(0,T)} \lvert Tv-Tg\rvert\,d\mathcal{H}^{n-1}\dt\\
    & \quad +\frac{1}{2}\|v(0)-u_0\|^2_{L^2(\Omega)} + \esssup_{0<\tau<\infty}\frac{1}{4}\|(v-u_i)(\tau)\|^2_{L^2(\Omega)}, 
\end{split}
\end{equation}
where in the last step we used Young's inequality. Taking essential supremum over $\tau$ in the left hand side shows
\begin{equation}
\label{stability estimate 2}
\begin{split}
    & \esssup_{0<\tau<T}\|(v-u_i)(\tau)\|^2_{L^2(\Omega)} \\
    & \leq c \iint_{\Omega_T} \lvert \partial_t v\rvert^2 \dx\dt + c\int_{0}^{T} \mathcal{A}(Dv)(\Omega)\dt \\
    & \quad + c\iint_{\partial\Omega\times(0,T)} \lvert Tv-Tg\rvert\,d\mathcal{H}^{n-1}\dt +c\|v(0)-u_0\|^2_{L^2(\Omega)},
\end{split}
\end{equation}
for a.e. $\tau\in (0,T)$. Then \eqref{stability estimate 2} implies $u_i$ is a bounded sequence in $L^\infty(0,T;L^2(\Omega))$ and then from \eqref{stability estimate 1}
\begin{equation*}
    \liminf_{i\to\infty} \int_0^T\|Du_i(t)\|(\Omega)\dt <\infty.
\end{equation*}
Hence, choosing a subsequence if necessary, still denoted by $u_i$, there exists a function $u\in L^\infty(0,T;L^2(\Omega))$ such that $u_i\wstar u$ in $L^\infty(0,T;L^2(\Omega))$ as $i\to\infty$. \Cref{low semcont of par BV} implies $u\in L^1_{w^*}(0,T;\BV(\Omega))$. Let us show $u$ is a weak solution to \eqref{PDE} by showing that \eqref{intermediate condition} holds. By \Cref{prop: equivalence for differentiable functionals}, the vector fields $z_i= D_\xi f_i(x,\nabla u_i)$ satisfy \eqref{intermediate condition} with respect to the solution $u_i$. We have 
\begin{equation*}
    D_\xi f_i(x,\xi)= \frac{f(x,\xi)}{\sqrt{\mu_i^2+f(x,\xi)^2}} D_\xi f(x,\xi),
\end{equation*}
for $\xi \neq 0$ and $D_\xi f_i(x,0)=0$. The boundedness of the $D_\xi f$ implies that $z_i$ is a bounded sequence in $L^\infty(\Omega_T;\R^n)$. Thus, after taking a subsequence, still denoted by $z_i$, there exists $z\in L^\infty(\Omega_T,\R^n)$ such that $z_i\wstar z$ as $i\to\infty$. Let us verify first that $f^*(x,z)=0$ for a.e. $(x,t)\in \Omega_T$.
    
For every $i$, we have $f_i^*(x,z_i)<\infty$ for a.e. $(x,t)$ and $f_i^\infty=f$, which implies $f^*(x,z_i)=0$, for a.e. $(x,t)$. Hence, we conclude that for any cube $Q\subset\Omega_T$ and $\xi\in\R^n$
\begin{equation*}
    \frac{1}{\abs Q}\iint_Q z \cdot \xi \dx\dt = \lim_{i\to \infty} \frac{1}{\abs Q} \iint_Q z_i \cdot \xi \dx\dt \leq  \frac{1}{\abs Q}\iint_Q f(x,\xi) \dx\dt.
\end{equation*}
Lebesgue's differentiation theorem implies $z\cdot \xi\leq f(x,\xi)$ for a.e. $(x,t)\in\Omega_T$, and consequently $f^*(x,z)=0$ for a.e. $(x,t)\in \Omega_T$. Moreover, as 
\begin{equation*}
    z_i\cdot \xi -f_i(x,\xi)\geq z_i\cdot \xi -f(x,\xi)-\mu_i   
\end{equation*}
we find that 
\begin{equation}
    \label{conjugate vanishes in limit}
        -\mu_i \leq f_i^*(x,z)\leq 0.
\end{equation}
We will apply a similar averaging procedure as in the proof of \Cref{thm: existence theorem} to conclude \eqref{intermediate condition}. Let $v$ be a comparison map as in \eqref{intermediate condition}. Integrate \eqref{intermediate condition} over $(\tau,\tau+h)$ for $h>0$, while using \eqref{conjugate vanishes in limit} and $f(Du_i)(\Omega)\leq f_i(Du_i)(\Omega)$ to conclude
\begin{align*}
    &  \int_0^{\tau} f(Du_i)(\Omega)\dt + \iint_{\partial\Omega\times(0,\tau)} \lvert Tu_i-Tg\rvert f^\infty(\cdot, \nu_\Omega)\dH\dt \\
    & \quad + \frac{1}{h}\int_\tau^{\tau+h} \frac{1}{2}\|(v-u_i)(s)\|^2_{L^2(\Omega)}\ds + \mu_i \lvert\Omega_T\rvert \\
    & \leq  \frac{1}{h}\int_\tau^{\tau+h}\iint_{\Omega_s}z_i\cdot\nabla v\dx\dt\ds  + \iint_{\partial\Omega\times(0,\tau+h)} \lvert Tv-Tg\rvert f^\infty(\cdot, \nu_\Omega)\dH\dt \\
    & \quad +\frac{1}{h}\int_\tau^{\tau+h} \iint_{\Omega_s} \partial_t v (v-u_i)\dx\dt \ds+\frac{1}{2}\|v(0)-u_{0}\|^2_{L^2(\Omega)} + \mu_i \lvert\Omega_T\rvert.
\end{align*}
Applying \Cref{lemma: lsc of functional on parabolic bv for weak conv}, the lower semicontinuity of the $L^2$ norm on $\Omega\times(\tau,\tau+h)$ and the previous estimate we have
\begin{align*}
    & \int_0^{\tau} f(Du)(\Omega)\dt + \iint_{\partial\Omega\times(0,\tau)} \lvert Tu-Tg\rvert f^\infty(\cdot, \nu_\Omega)\dH\dt \\ 
    & \quad +\frac{1}{h}\int_\tau^{\tau+h} \frac{1}{2}\|(v-u)(s)\|^2_{L^2(\Omega)}\ds \\
    & \leq \liminf_{i\to \infty} \left(\int_0^{\tau} f(Du_i)(\Omega)\dt + \iint_{\partial\Omega\times(0,\tau)} \lvert Tu_i-Tg\rvert f^\infty(\cdot, \nu_\Omega)\dH\dt\right) \\
    & \quad +  \liminf_{i\to \infty} \frac{1}{h}\int_\tau^{\tau+h} \frac{1}{2}\|(v-u_i)(s)\|^2_{L^2(\Omega)}\ds \\
    & \leq \frac{1}{h}\int_\tau^{\tau+h}\iint_{\Omega_s}z\cdot\nabla v\dx\dt\ds + \iint_{\partial\Omega\times(0,\tau+h)} \lvert Tv-Tg\rvert f^\infty(\cdot, \nu_\Omega)\dH\dt \\
    & \quad + \frac{1}{h}\int_\tau^{\tau+h}\iint_{\Omega_s} \partial_t v (v-u)\dx\dt\ds+\frac{1}{2}\|v(0)-u_{0}\|^2_{L^2(\Omega)}.
\end{align*}
Letting $h\to 0$ the Lebesgue differentiation theorem implies that $u$ satisfies \eqref{intermediate condition} for a.e. $\tau\in (0,T)$ with the vector field $z$. By \Cref{remark on subgradient in def of sol} we find that $z\in\partial_\xi f(x,\nabla u)$ and we are done.
\end{proof}
Combining the existence result for variational solutions from \Cref{thm: existence theorem} with the equivalence result for differentiable functionals from \Cref{prop: equivalence for differentiable functionals}, shows the existence of a sequence of weak solutions as in the statement of \Cref{stability theorem}. Thus, combining with the previous theorem shows the existence of weak solutions to \eqref{PDE} for $1$-homogeneous functionals. Moreover, by \Cref{equivalences part 1} any weak solution is a variational solution. Uniqueness of variational solutions is guaranteed by the comparison principle in \Cref{comparison principle}, and hence we conclude the following corollary.
\begin{corollary}
    Let $f$ be as in \Cref{stability theorem} and $g\in L^1_{w^*}(0,T;\BV(\Omega))$, $u_0\in L^2(\Omega)$. Then $u$ is a variational solution of \eqref{PDE} if and only if it is a weak solution.
\end{corollary}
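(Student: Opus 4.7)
The forward implication, that every weak solution is a variational solution, is already contained in \Cref{equivalences part 1}, so the plan is entirely devoted to the reverse implication. The strategy is to produce, via the regularization/stability machinery developed in \Cref{section: stability}, an independent weak solution $\tilde u$ to \eqref{PDE} for the given data $(u_0,g)$, and then invoke uniqueness of variational solutions to conclude $u=\tilde u$.

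First I would fix a sequence $\mu_i\to 0^+$ and consider the regularized Lagrangians $f_{\mu_i}(x,\xi)=\sqrt{\mu_i^2+f(x,\xi)^2}$. These are differentiable in $\xi$ with $|D_\xi f_{\mu_i}|\leq |D_\xi f|\leq M$, and they inherit the structural hypotheses \eqref{linear growth}--\eqref{recession function} with recession function $f$ itself. \Cref{thm: existence theorem} then provides a variational solution $u_i$ to \eqref{PDE} with Lagrangian $f_{\mu_i}$ and the same data $(u_0,g)$. Since $f_{\mu_i}$ satisfies the hypotheses of \Cref{prop: equivalence for differentiable functionals}, each $u_i$ is in fact a weak solution of the regularized problem, with associated vector field $z_i=D_\xi f_{\mu_i}(x,\nabla u_i)$.

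Next, apply \Cref{stability theorem} to the sequence $(u_i)$. Along a subsequence we obtain $u_i\wstar \tilde u$ in $L^\infty(0,T;L^2(\Omega))$ with $\tilde u\in L^1_{w^*}(0,T;\BV(\Omega))\cap L^\infty(0,T;L^2(\Omega))$, and the theorem asserts that $\tilde u$ is a weak solution of \eqref{PDE} for the original $1$-homogeneous functional $f$ and the data $(u_0,g)$. By \Cref{equivalences part 1}, $\tilde u$ is then also a variational solution for the same data.

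Finally, the given variational solution $u$ and the newly constructed variational solution $\tilde u$ share the same boundary data $g$ and initial data $u_0$. Applying \Cref{comparison principle} twice, once with $(u,\tilde u)$ and once with $(\tilde u,u)$, with $g=h$ and $u_0=v_0$, forces $(u-\tilde u)_+(\tau)=0$ and $(\tilde u-u)_+(\tau)=0$ in $L^2(\Omega)$ for a.e.\ $\tau\in(0,T)$, hence $u=\tilde u$ almost everywhere in $\Omega_T$. Since $\tilde u$ is a weak solution, so is $u$, which completes the proof. No step here is expected to be a real obstacle; the work has already been done in \Cref{thm: existence theorem}, \Cref{prop: equivalence for differentiable functionals}, \Cref{stability theorem}, \Cref{equivalences part 1} and \Cref{comparison principle}, and the corollary is merely the assembly of these pieces.
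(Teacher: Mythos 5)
Your proposal is correct and follows exactly the same route as the paper: regularize with $f_{\mu_i}$, produce weak solutions of the regularized problems via \Cref{thm: existence theorem} and \Cref{prop: equivalence for differentiable functionals}, pass to the limit with \Cref{stability theorem} to obtain a weak solution of \eqref{PDE}, and then identify it with the given variational solution through \Cref{equivalences part 1} and the uniqueness supplied by \Cref{comparison principle}. The only thing the paper leaves implicit that you spell out is the verification that $f_{\mu_i}$ has recession function $f$ and bounded $D_\xi f_{\mu_i}$, which is a harmless and correct elaboration.
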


\section{Local boundedness of solutions}
\label{section: local boundedness}
The goal of this section is to study the local boundedness of solutions to \eqref{eq 1}. As this is a local property, the boundary values and initial value of our solution do not play an essential role. Hence, it is natural to ask for a notion of local solution to \eqref{PDE}. We present one such definition below.
\begin{definition}
A function $u\in L^1_{w^*}(0,T;\BV_{\loc}(\Omega))\cap L^\infty(0,T;L^2_{\loc}(\Omega))$ is a local variational solution to \eqref{PDE} in $\Omega_T$ if for all Lipschitz domains $\Omega'\Subset \Omega$ the inequality
\begin{equation}
\label{local var inequality}
\begin{split}
    & \int_{\tau_1}^{\tau_2} f(Du)(\Omega')\dt + \frac{1}{2}\|(v-u)(\tau_2)\|^2_{L^2(\Omega')}  \\
    & \leq  \int_{\tau_1}^{\tau_2}  f(Dv)(\Omega')\dt + \iint_{\partial\Omega'\times(\tau_1,\tau_2)}\lvert Tu-Tv\rvert f^\infty(\cdot, \nu_{\Omega'})\dH\dt \\ 
    & \quad +\iint_{\Omega'\times(\tau_1,\tau_2)} \partial_t v (v-u)\dx\dt +\frac{1}{2}\|(v-u)(\tau_1)\|^2_{L^2(\Omega')},
\end{split}
\end{equation}
holds for a.e. $0<\tau_1<\tau_2<T$ and for all $v\in L_{w^*}^1(0,T;\BV(\Omega))\cap C([0,T],L^2(\Omega))$ with $\partial_t v\in L^1(0,T;L^2(\Omega))$.
\end{definition}
To prove a local boundedness result we will need the following parabolic energy estimate.
\begin{proposition} 
\label{prop: energy estimate}
Let $u$ be a local variational solution and $\alpha\geq 1$ with $u\in L^{\alpha+1}_{\loc}(\Omega_T)$. Then there is a constant $c=c(\alpha,\lambda,\Lambda)$ such that for every $k\in\mathbb R$, $\Omega'\times(\tau_1,\tau_2)\Subset\Omega_T$ we have
\begin{equation}
\label{energy estimate} 
\begin{split}
    & \esssup_{\tau_1 < t<\tau_2}\int_{\Omega'}\varphi(u-k)_+^{1+\alpha} \dx + \int_{\tau_1}^{\tau_2}\| D(\varphi (u-k)^\alpha_+)\|(\Omega')\dt\\ 
    & \leq c\iint_{\Omega'\times(\tau_1,\tau_2)}\lvert\nabla\varphi\rvert(u-k)^\alpha_+ \dx\dt + c\iint_{\Omega'\times(\tau_1,\tau_2)}(\partial_t\varphi)_+ (u-k)_+^{1+\alpha}\dx\dt \\
    & \quad + c \iint_{(\Omega'\times(\tau_1,\tau_2))\cap \{u> \lambda\}}\varphi \dx\dt, 
\end{split}
\end{equation}
whenever $\varphi\in C_0^\infty(\Omega'\times(\tau_1,\tau_2))$, $\varphi\geq 0$.
\end{proposition}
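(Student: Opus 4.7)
The formal energy estimate arises by testing the PDE with $\varphi(u-k)_+^\alpha$ and integrating by parts in both variables. Since variational solutions lack the time regularity to use this directly, I pass through the time mollification of Section~\ref{section: approximations} and convert to a valid comparison map for \eqref{local var inequality}.

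My plan is to use as comparison map
\begin{equation*}
    v := u^\delta - \Phi^\delta_L, \qquad \Phi^\delta_L := \varphi\,T_L\bigl((u^\delta-k)_+^\alpha\bigr),
\end{equation*}
where $u^\delta$ is the exponential time mollification of $u$ with some initial value $w\in\BV(\Omega)\cap L^2(\Omega)$, and $T_L(s)=\min(s,L)$. The truncation at height $L$ guarantees $v\in L^\infty$ with $\partial_t v\in L^2$, and the mollification supplies the $C([0,T],L^2)$ regularity required of a comparison. Since $\varphi$ is compactly supported in $\Omega'\times(\tau_1,\tau_2)$, $v$ coincides with $u^\delta$ near $\partial\Omega'$, so the trace term in \eqref{local var inequality} reduces to $\iint|Tu^\delta-Tu|\,f^\infty(\cdot,\nu_{\Omega'})$ and vanishes as $\delta\to 0$ by continuity of the trace with respect to strict convergence. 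Crucially, I apply \eqref{local var inequality} on a time interval $(\tau_1',\tau)$ with $\tau_1'$ outside $\supp\varphi$ and $\tau\in(\tau_1,\tau_2)$ a variable endpoint; the $L^2$-term at $\tau_1'$ then reduces to $\tfrac12\|u^\delta-u\|_{L^2}^2(\tau_1')\to 0$.

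For the functional contribution, the chain rule gives $\nabla \Phi^\delta_L = \nabla\varphi\,T_L((u^\delta-k)_+^\alpha) + \alpha\varphi(u^\delta-k)_+^{\alpha-1}\chi_{\{(u^\delta-k)_+^\alpha<L\}}\nabla u^\delta$ on the absolutely continuous part (and analogously for the singular part), so convexity of $f$ together with $\lambda|\xi|\le f(\cdot,\xi)\le\Lambda(1+|\xi|)$ yields
\begin{align*}
    f(Dv)(\Omega')
    &\le f(Du^\delta)(\Omega') - \alpha\lambda\int_{\Omega'}\varphi(u^\delta-k)_+^{\alpha-1}\chi_{\{\cdot<L\}}\,\mathrm d\|Du^\delta\| \\
    &\quad + c\int_{\Omega'}|\nabla\varphi|(u^\delta-k)_+^\alpha\dx + c\,\mathcal L^n\bigl(\supp\varphi\cap\{u^\delta>k\}\bigr).
\end{align*}
The negative term transfers to the left of \eqref{local var inequality}; combined with the $|\nabla\varphi|(u^\delta-k)_+^\alpha$ correction it reconstructs $\lambda\,\|D(\varphi T_L((u^\delta-k)_+^\alpha))\|(\Omega')$. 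Passing $\delta\to 0$ is justified by area-strict convergence (\Cref{lemma: time moll conv area strictly}) and Reshetnyak continuity (\Cref{reshetnyak continuity}), and $L\to\infty$ by dominated convergence using the hypothesis $u\in L^{\alpha+1}_{\loc}(\Omega_T)$.

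The time derivative contribution $\iint_{\tau_1'}^\tau\partial_t v\,(v-u)\dx\dt$ is decomposed via $\partial_t u^\delta=(u-u^\delta)/\delta$ from \eqref{derivative of time mollification}: the piece $\iint\partial_t u^\delta(u^\delta-u)=-\|u-u^\delta\|_{L^2}^2/\delta\le 0$ is favorable and can be dropped, and the principal piece $-\iint\partial_t u^\delta\cdot\varphi(u^\delta-k)_+^\alpha$ is rewritten via the chain rule $\partial_t u^\delta\cdot(u^\delta-k)_+^\alpha=\tfrac{1}{\alpha+1}\partial_t(u^\delta-k)_+^{\alpha+1}$ and integrated by parts in time, producing
\begin{equation*}
    \tfrac{1}{\alpha+1}\int_{\Omega'}\varphi(\tau)\bigl(u^\delta(\tau)-k\bigr)_+^{\alpha+1}\dx
    -\tfrac{1}{\alpha+1}\iint_{\Omega'\times(\tau_1',\tau)}\partial_t\varphi\,(u^\delta-k)_+^{\alpha+1}\dx\dt,
\end{equation*}
the $\tau_1'$-boundary piece vanishing since $\varphi(\tau_1')=0$. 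Moving the first piece to the left of \eqref{local var inequality} and taking the essential supremum over $\tau\in(\tau_1,\tau_2)$ produces the $\esssup_t\int\varphi(u-k)_+^{\alpha+1}$ term on the left, while the second contributes the $(\partial_t\varphi)_+(u-k)_+^{1+\alpha}$ term on the right. The remaining contributions involving $\partial_t\Phi^\delta_L$ either vanish in the limit or are absorbed using smallness of $(u-u^\delta)$ in $L^2$. The main technical obstacle is coordinating the parameter limits $\delta\to 0$, $L\to\infty$ so that the area-strict convergence of $u^\delta\to u$ is preserved for the Reshetnyak step in the $f(Dv)$ term, and justifying the chain-rule identity for $(u^\delta-k)_+^{\alpha+1}$ after the $T_L$-truncation is lifted — both handled by standard mollification and dominated convergence using $u\in L^{\alpha+1}_{\loc}$.
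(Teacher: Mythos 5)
Your overall strategy matches the paper's: mollify in time, test \eqref{local var inequality} with $v=u^\delta$ minus a truncated multiple of $\varphi(u^\delta-k)_+^\alpha$, handle $\partial_t v(v-u)$ via \eqref{derivative of time mollification} and integration by parts, and pass $\delta\to 0$ and the truncation parameter to infinity using $u\in L^{\alpha+1}_{\loc}$. However, there is a genuine gap at the convexity step.

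Writing $\nabla v = (1-a)\nabla u^\delta - T_L((u^\delta-k)_+^\alpha)\nabla\varphi$ with $a=\alpha\varphi(u^\delta-k)_+^{\alpha-1}\chi_{\{(u^\delta-k)_+^\alpha<L\}}$, the inequality $f(\nabla v)\le(1-a)f(\nabla u^\delta)+\Lambda\,T_L(\cdot)\lvert\nabla\varphi\rvert+\Lambda a$ that your estimate relies on is valid only for $0\le a\le 1$. Your truncation at level $L$ acting on $(u^\delta-k)_+^\alpha$ does \emph{not} give this: on $\{(u^\delta-k)_+^\alpha<L\}$ the factor $(u^\delta-k)_+^{\alpha-1}$ is only bounded by $L^{(\alpha-1)/\alpha}$, so for $\alpha>1$ the coefficient $a$ blows up with $L$ and the convexity decomposition is not applicable. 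Consequently the claimed passage from $f(Dv)$ to $f(Du^\delta)-\alpha\lambda\int\varphi(u^\delta-k)_+^{\alpha-1}\chi_{\{\cdot<L\}}\,d\|Du^\delta\|+\dots$, and hence the reconstruction of $\lambda\|D(\varphi T_L((u^\delta-k)_+^\alpha))\|$, is unjustified.

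The paper avoids this by a different truncation/scaling: it takes $w_\delta=\min(u^\delta,h)-\min(u^\delta,k)$ (so $0\le w_\delta\le h-k$) and uses $\Tilde\varphi=\varphi/(\alpha(h-k)^{\alpha-1})$ in place of $\varphi$, giving $a=\alpha\Tilde\varphi w_\delta^{\alpha-1}\le\varphi\le 1$ pointwise; it also works with smooth approximations $w_i$ of $w=\min(u,h)-\min(u,k)$ rather than applying the chain rule on the singular part directly. After deriving the estimate for $\Tilde\varphi$ and $w$, it multiplies by $\alpha(h-k)^{\alpha-1}$ and lets $h\to\infty$, using $u\in L^{\alpha+1}_{\loc}$ to kill the residual terms involving $(u-h)_+$. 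To repair your argument you would need to adopt a truncation that bounds $(u^\delta-k)_+$ itself (not its $\alpha$-th power) together with the compensating rescaling of $\varphi$, exactly as the paper does; the $T_L$-truncation of $(u^\delta-k)_+^\alpha$ alone is insufficient.
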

\begin{proof}
Let $\Omega'\times(\tau_1,\tau_2)\Subset\Omega_T$ and $\tau_1<t_1<t_2<\tau_2$ to be chosen later. Let $\varphi\in C_0^\infty(\Omega'\times(\tau_1,\tau_2))$ with $\varphi\geq 0$. We may assume $0\leq \varphi\leq 1$.

Let $h>k$, $\delta>0$ and denote $\Tilde\varphi=\varphi/(\alpha (h-k)^{\alpha-1})$. We will apply $v=u^\delta-\Tilde\varphi(\min(u^\delta,h)-\min(u^\delta,k))^\alpha$ as test function in \eqref{local var inequality}. Denote $w_\delta=\min(u^\delta,h)-\min(u^\delta,k)$ and $w=\min(u,h)-\min(u,k)$. For the time derivative that appears we use \eqref{derivative of time mollification} and integrate by parts to obtain
\begin{align*}
    & \iint_{\Omega_\times(t_1,t_2)} \partial_t v(v-u)\dx\dt \\
    & \leq \iint_{\Omega_\times(t_1,t_2)} -\partial_t w \Tilde\varphi w_\delta^\alpha -\partial_t(u^\delta-h)_+(k-h)\Tilde\varphi\dx\dt  \\
    & \quad + \frac{1}{2}\| (\varphi w_\delta^\alpha) (t_2)\|_{L^2(\Omega}^2-\frac{1}{2}\| (\varphi w_\delta^\alpha)(t_1)\|_{L^2(\Omega}^2 \\
    & = \iint_{\Omega\times(t_1,t_2)} \frac{1}{\alpha+1}w_\delta^{\alpha+1}\partial_t\Tilde\varphi + (k-h)(u^\delta-h)_+\partial_t\Tilde\varphi\dx\dt  \\
    & \quad - \left[\int_{\Omega'}  \frac{1}{\alpha+1}w_\delta^{\alpha+1}\Tilde\varphi \dx\right]_{t=t_1}^{t_2} - \left[\int_{\Omega'} (k-h)(u^\delta-h)_+\Tilde\varphi \dx\right]_{t=t_1}^{t_2} \\
    & \quad + \frac{1}{2}\| (\Tilde\varphi w_\delta)^\alpha(t_2)\|_{L^2(\Omega}^2-\frac{1}{2}\| (\Tilde\varphi w_\delta^\alpha )(t_1)\|_{L^2(\Omega}^2.
\end{align*}
Hence, testing \eqref{local var inequality} with $v$ and letting $\delta\to 0$ gives
\begin{equation}
\label{var ineq for energy estimate} 
\begin{split}
    & \int_{t_1}^{t_2} f(Dw)(\Omega)\dt \leq  \int_{t_1}^{t_2}  f(D(w-\Tilde\varphi w^\alpha))(\Omega)\dt \\ 
    &  +\iint_{\Omega\times(t_1,t_2)} \frac{1}{\alpha+1}w^{\alpha+1}\partial_t\Tilde\varphi + (k-h)(u-h)_+\partial_t\Tilde\varphi\dx\dt  \\
    & - \left[\int_{\Omega'}  \frac{1}{\alpha+1}w^{\alpha+1}\Tilde\varphi \dx\right]_{t=t_1}^{t_2} - \left[\int_{\Omega'} (k-h)(u-h)_+\Tilde\varphi \dx\right]_{t=t_1}^{t_2}. 
\end{split}
\end{equation}
Let $w_i\in L^1(0,T;W^{1,1}(\Omega))$, $i\in\mathbb N$, be a sequence with $w_i\to w$ as in \Cref{lemma: strict approx in parabolic bv}. We may assume $0\leq w_i\leq h-k$ and that $\chi_{w_i\geq 0}\to \chi_{w\geq 0}$. Then by convexity
\begin{align*}
    & \int_{t_1}^{t_2}f(D(w-\varphi w^\alpha )(\Omega) \dt \\
    & \leq \liminf_{i\to\infty} \iint_{\Omega\times(t_1,t_2)} f(\nabla (w_i-\varphi w_i^\alpha))\dx\dt \\
    & \leq \liminf_{i\to\infty}\iint_{\Omega\times(t_1,t_2)} \left(1-\Tilde\varphi\alpha w_i^{\alpha-1}\right)f(\nabla w_i) + \Lambda\lvert \nabla \Tilde\varphi\rvert w_i^\alpha+\Lambda\Tilde\varphi \chi_{\{w_i>0\}}\dx\dt \\
    & \leq \int_{t_1}^{t_2}f(Dw)(\Omega) \dt + \Lambda\iint_{\Omega\times(t_1,t_2)} \lvert \nabla\Tilde\varphi \rvert w^\alpha +\Tilde\varphi \chi_{\{w\geq  0\}} \dx\dt \\
    & \quad - \limsup_{i\to\infty} \iint_{\Omega\times(t_1,t_2)}\Tilde\varphi \alpha w_i^{\alpha-1} \dx\dt.
\end{align*}
In the above estimation it is important that $\Tilde\varphi\alpha w_i^{\alpha-1}\leq 1$. By lower semicontinuity of the $\BV$-norm and the assumptions on $f$ we have
\begin{align*}
    & \int_{t_1}^{t_2} \|D (\Tilde\varphi w^\alpha)\|(\Omega)\dt\\
    & \leq \limsup_{i\to\infty} \iint_{\Omega\times(t_1,t_2)}\Tilde\varphi\alpha w_i^{\alpha-1}\lvert \nabla w_i\rvert\dx\dt + \iint_{\Omega\times(t_1,t_2)} \lvert\nabla\Tilde\varphi\rvert w^\alpha\dx\dt\\
    & \leq \limsup_{i\to\infty} \frac{1}{\lambda} \iint_{\Omega\times(t_1,t_2)}\Tilde\varphi\alpha w_i^{\alpha-1}f(\nabla w_i)\dx\dt + \iint_{\Omega\times(t_1,t_2)} \lvert\nabla\Tilde\varphi\rvert w^\alpha\dx\dt.
\end{align*}
Inserting the two previous inequalities into \eqref{var ineq for energy estimate} and absorbing terms we arrive at
\begin{align*}
    & \lambda \int_{t_1}^{t_2} \|D (\Tilde\varphi w^\alpha)\|(\Omega)\dt \leq  (\Lambda+\lambda)\iint_{\Omega\times(t_1,t_2)} \lvert \nabla\Tilde\varphi \rvert w^\alpha +\Tilde\varphi \chi_{\{w\geq  0\}} \dx\dt \\ 
    & \quad +\iint_{\Omega\times(t_1,t_2)} \frac{1}{\alpha+1}w^{\alpha+1}\partial_t\Tilde\varphi + (k-h)(u-h)_+\partial_t\Tilde\varphi\dx\dt  \\
    & \quad - \left[\int_{\Omega'}  \frac{1}{\alpha+1}w^{\alpha+1}\Tilde\varphi \dx\right]_{t=t_1}^{t_2} - \left[\int_{\Omega'} (k-h)(u-h)_+\Tilde\varphi \dx\right]_{t=t_1}^{t_2}. 
\end{align*}
We remark that it is also possible to use the Chain rule in $\BV$ instead of the approximation scheme to conclude the above inequality, see \cite[Theorem 3.99]{AmbrosioFuscoPallara}. Multiplying the above by $\alpha(h-k)^{\alpha-1}$ and letting $h\to \infty$ we have
\begin{align*}
    & \lambda \int_{t_1}^{t_2} \|D (\varphi w^\alpha)\|(\Omega)\dt \leq  (\Lambda+\lambda)\iint_{\Omega\times(t_1,t_2)} \lvert \nabla\varphi \rvert w^\alpha +\varphi \chi_{\{w \geq 0\}}\dx\dt \\ 
    & \quad +\iint_{\Omega\times(t_1,t_2)} \frac{1}{\alpha+1}w^{\alpha+1}\partial_t\varphi \dx\dt - \left[\int_{\Omega'}  \frac{1}{\alpha+1}(u-k)_+^{\alpha+1}\varphi \dx\right]_{t=t_1}^{t_2},
\end{align*}
for a.e. $\tau_1<t_1<t_2<\tau_2$. Applying the above estimate to a sequence $k_i>k$ with $k_i\to k$ gives
\begin{equation}
\label{energy estimate before time choice}
\begin{split}
    & \lambda \int_{t_1}^{t_2} \|D (\varphi w^\alpha)\|(\Omega)\dt \leq  (\Lambda+\lambda)\iint_{\Omega\times(t_1,t_2)} \lvert \nabla\varphi \rvert w^\alpha +\varphi \chi_{\{w > 0\}}\dx\dt \\ 
    & \quad +\iint_{\Omega\times(t_1,t_2)} \frac{1}{\alpha+1}w^{\alpha+1}\partial_t\varphi \dx\dt - \left[\int_{\Omega'}  \frac{1}{\alpha+1}(u-k)_+^{\alpha+1}\varphi \dx\right]_{t=t_1}^{t_2}, 
\end{split}
\end{equation}
for a.e. $\tau_1<t_1<t_2<\tau_2$. By taking $t_1=\tau_1$, $t_2=\tau_2$ we obtain
\begin{equation}
\label{energy estimate for bv part}
\begin{split}
    & \lambda \int_{\tau_1}^{\tau_2} \|D (\varphi w^\alpha)\rvert(\Omega)\dt \leq  (\Lambda+\lambda)\iint_{\Omega\times(\tau_1,\tau_2)} \lvert \nabla\varphi \rvert w^\alpha +\varphi\chi_{\{w> 0\}} \dx\dt \\ 
    & \quad +\iint_{\Omega\times(\tau_1,\tau_2)} \frac{1}{\alpha+1}w^{\alpha+1}(\partial_t\varphi)_+ \dx\dt. 
\end{split}
\end{equation}
On the other hand, choosing $t_1=\tau_1$ and letting $t_2\in (\tau_1,\tau_2)$, \eqref{energy estimate before time choice} implies
\begin{equation}
\label{energy estimate for time part}
\begin{split}
    \esssup_{\tau_1<t<\tau_2}\int_{\Omega'} \frac{1}{\alpha+1}(u-k)_+^{\alpha+1}\varphi\dx & \leq  (\Lambda+\lambda)\iint_{\Omega\times(\tau_1,\tau_2)} \lvert \nabla\varphi \rvert w^\alpha +\varphi \chi_{\{w> 0\}}\dx\dt \\ 
    & \qquad +\iint_{\Omega\times(\tau_1,\tau_2)} \frac{1}{\alpha+1}w^{\alpha+1}\partial_t\varphi \dx\dt. 
\end{split}
\end{equation}
Finally, adding \eqref{energy estimate for bv part} and \eqref{energy estimate for time part} shows \eqref{energy estimate}.
\end{proof}

In what follows we use the notation
\begin{equation*}
    Q^-_{\rho,\theta}(z,t_0)=B(z,\rho)\times(t_0-\theta\rho,t_0),
\end{equation*}
for $(z,t_0)\in\Omega_T$, $\rho>0$, $\theta>0$. Given a measure $\mu$, measurable set $A$ with $\mu(A)>0$, and an integrable function $f$, the integral average of $f$ over $A$ is denoted by
\begin{equation*}
    \dashint_A f \dmu = \frac{1}{\mu(A)}\int_A f\dmu. 
\end{equation*}
We now arrive at our local boundedness result. The statement assumes that the local weak solution be a priori locally integrable to a power $r>n$, and hence the result can be seen as an $L^r-L^\infty$ regularisation effect for $r>n$.
\begin{proposition}
\label{prop: local boundedness}
Let $u$ be be a weak solution and assume $u\in L^r_{\loc}(\Omega_T)$ for some $r>n$. Then there exists a constant $c=c(n,r,\lambda,\Lambda)$ such that for any $k_0\in\R$,
\begin{equation}
\label{ess sup estimate}
\begin{split}
    & \esssup_{Q^-_{\rho/2,\theta}(z,t_0)} (u-k_0)_+ \\ 
    & \leq c \bigg( \frac{( 1+\frac{1}{\xi} )^{1+\frac{1}{n}}}{\theta} \bigg)^{\frac{n}{r-n}}\left(\fiint_{Q_0} (u-k_0)_+^r \dx\dt+\theta\rho\right)^\frac{1}{r-n}+\rho+\xi\theta, 
\end{split}
\end{equation}
whenever $Q^-_{\rho,\theta}(z,t_0)\Subset\Omega_T$.
\end{proposition}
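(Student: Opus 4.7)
The proof will follow a De Giorgi iteration driven by Proposition \ref{prop: energy estimate}, together with the parabolic embedding induced by $BV(B) \hookrightarrow L^{n/(n-1)}(B)$. Recall first the standard interpolation: if $V \in L^\infty(0,T;L^p) \cap L^1(0,T;L^{n/(n-1)})$, then
\begin{equation*}
\iint V^{1+p/n}\dx\dt \;\leq\; c\,\|V\|_{L^\infty(L^p)}^{p/n}\,\|V\|_{L^1(L^{n/(n-1)})}.
\end{equation*}
The plan is to apply this with $V = \varphi_j w_j^{r-1}$, $w_j = (u-k_j)_+$, and $p = r/(r-1)$, so that the output exponent is $r + (r-n)/n$, strictly larger than $r$ because $r>n$.

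\textbf{Iteration setup.} Fix $\xi>0$ and let $\kappa>0$ be a parameter to be determined. Define shrinking cylinders $Q_j = B(z,r_j)\times(t_0-\theta r_j, t_0)$ with $r_j = \tfrac{\rho}{2}(1+2^{-j})$, and truncation levels $k_j = k_0 + \rho + \xi\theta + \kappa(1-2^{-j})$. Take standard cutoffs $\varphi_j \in C_0^\infty(Q_j)$ with $\varphi_j\equiv 1$ on $Q_{j+1}$, $|\nabla\varphi_j|\lesssim 2^j/\rho$ and $|\partial_t\varphi_j|\lesssim 2^j/(\theta\rho)$. Applying the energy estimate with $\alpha=r-1$ and the interpolation above yields
\begin{equation*}
\iint_{Q_{j+1}} w_j^{r+(r-n)/n}\dx\dt \;\leq\; c\,E_j^{1+1/n}, \qquad E_j \;=\; \tfrac{2^j}{\rho}\iint_{Q_j}w_j^{r-1} + \tfrac{2^j}{\theta\rho}\iint_{Q_j}w_j^{r} + |Q_j\cap\{w_j>0\}|.
\end{equation*}

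\textbf{Closing the recursion via Chebyshev.} On $\{w_j>0\}$ one has $w_{j-1}\geq \delta_j := k_j-k_{j-1} = \kappa 2^{-j}$, hence $w_j^a \leq \delta_j^{a-r} w_{j-1}^r$ and $|Q_j\cap\{w_j>0\}| \leq \delta_j^{-r}\iint_{Q_{j-1}} w_{j-1}^r$; similarly $\iint_{Q_{j+1}} w_{j+1}^r \leq \delta_{j+1}^{-(r-n)/n}\iint_{Q_{j+1}} w_j^{r+(r-n)/n}$. Setting $Y_j = \iint_{Q_j} w_j^r\dx\dt$, the three bounds combine into a geometric recursion
\begin{equation*}
Y_{j+1} \;\leq\; C_0^{\,j}\,\Psi(\rho,\theta,\xi,\kappa)\,Y_{j-1}^{1+1/n},
\end{equation*}
for an explicit $\Psi$ depending algebraically on its arguments. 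By the classical De Giorgi lemma, $Y_j\to 0$ whenever $Y_0$ is below a threshold of the form $\Psi^{-n} C_0^{-n^2}$. Taking $\kappa$ as the smallest value for which this smallness condition is met, and tracking the powers of $\rho,\theta,\xi$ carefully (the term $\frac{2^j}{\theta\rho}$ in $E_j$ is absorbed by allowing $\kappa$ to dominate $\xi\theta$, which explains the factor $(1+1/\xi)^{1+1/n}$), produces exactly
\begin{equation*}
\kappa \;\leq\; c\,\Big(\tfrac{(1+1/\xi)^{1+1/n}}{\theta}\Big)^{\!n/(r-n)}\Big(\fiint_{Q_0}(u-k_0)_+^r\dx\dt + \theta\rho\Big)^{\!1/(r-n)}.
\end{equation*}
Since $\esssup_{Q^-_{\rho/2,\theta}}(u-k_0)_+ \leq k_\infty-k_0 = \kappa + \rho + \xi\theta$, this is the asserted estimate \eqref{ess sup estimate}.

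\textbf{Main obstacle.} The key subtlety is the inhomogeneous term $|Q_j\cap\{u>k_j\}|$ arising in Proposition \ref{prop: energy estimate} from the linear growth of $f$: unlike the quadratic-growth case, it is not super-linear in $w_j$ and cannot be directly iterated. One must convert it, via Chebyshev, to a factor $\delta_j^{-r}$ multiplying $Y_{j-1}$. It is precisely the competition between this loss $\delta_j^{-r}$ and the Sobolev gain $\delta_{j+1}^{-(r-n)/n}$ that forces the hypothesis $r>n$ and that produces the non-homogeneous additive tail $\rho+\xi\theta$ in \eqref{ess sup estimate}. This is in contrast to the local boundedness arguments for the total variation flow in \cite{DiBeGV2012,DiBeGiaKla2017}, which presuppose an a priori $L^\infty$ bound and hence sidestep this obstruction.
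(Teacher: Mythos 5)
Your proposal is essentially correct and, while close in spirit to the paper's De~Giorgi iteration, it follows a genuinely different route, so the comparison is worth spelling out.

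The paper iterates on the normalized $L^2$ quantity $Y_i=\fiint_{Q_i}(u-k_i)_+^2\dx\dt$ and obtains a one-step recursion $Y_{i+1}\leq cb^i\Psi\,k^{-2(1/n-1/r)}Y_i^{1+1/n-1/r}$. The $L^r$ information enters through a three-factor spatial H\"older estimate (the paper's display \eqref{de giorgi estimate 1}) that splits $(u-k)_+^2$ into an $L^{n/(n-1)}$ piece, a level-set density factor raised to $1/n-1/r$, and an $\esssup L^r$ piece. This forces the paper to invoke the energy estimate at two different exponents: $\alpha=r-1$ for the $\esssup L^r$ term and $\alpha=1$ for the Sobolev/$L^\infty(L^2)$ terms. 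You instead iterate the unnormalized $L^r$ quantity $Y_j=\iint_{Q_j}w_j^r$, apply the parabolic interpolation with $V=\varphi_j w_j^{r-1}$ and $p=r/(r-1)$, and need the energy estimate only at $\alpha=r-1$. This is cleaner in that respect, and your exponent bookkeeping $(r-1)(1+\tfrac{p}{n})=r+\tfrac{r-n}{n}$ and the verification that $\|V\|_{L^\infty(L^p)}^p\leq\esssup\int\varphi w_j^r$ (because $\varphi^{r/(r-1)}\leq\varphi$) are both correct. The cost is a two-step recursion $Y_{j+1}\leq C_0^j\Psi\,Y_{j-1}^{1+1/n}$, since all three terms of $E_j$ must be Chebyshev-converted through the previous level $w_{j-1}\geq\delta_j$; this is a standard variant and closes by running the De~Giorgi lemma along even and odd subsequences. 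Your diagnosis of why $r>n$ is forced is also essentially right: the Sobolev gain $(r-1)s>r$ holds iff $r>n$, and without it the Chebyshev losses $\delta^{-r}$, $\delta^{-(r-n)/n}$ cannot be compensated.

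One detail to fix: you build the tail $\rho+\xi\theta$ into the base level $k_j=k_0+\rho+\xi\theta+\kappa(1-2^{-j})$ as an additive shift, but that shift does not by itself introduce $\rho$ or $\xi\theta$ into the Chebyshev factors, which are all powers of $\delta_j=\kappa 2^{-j}$. The paper instead works with levels $k_i=k_0+(1-2^{-i})k$ and imposes the constraint $k\geq\max(\xi\theta,\rho)$, which is then used to bound $1/k$ by $1/(\xi\theta)$ and $1/\rho$ at the right places; the tail $\rho+\xi\theta$ in \eqref{ess sup estimate} only appears a~posteriori from this constraint. You should drop the additive shift and impose $\kappa\geq\max(\rho,\xi\theta)$ before solving for the threshold, as you yourself hint at when you say ``allowing $\kappa$ to dominate $\xi\theta$''. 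With this adjustment the proof goes through and produces an estimate of the form \eqref{ess sup estimate}, possibly with a different power bookkeeping on the $(1+1/\xi)$ and $\theta$ factors than the paper's, but with the same structure.
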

\begin{proof}
Let $i\in\mathbb N$ and 
\begin{equation*}
    k_i=(1-2^{-i})k+k_0, 
\end{equation*}
where \begin{equation}
    \label{k first choice}
    k\geq \max(\xi\theta,\rho),
\end{equation}
is to be chosen later. Let also
\begin{equation*}
    \rho_i = \frac{1}{2}\rho+2^{-i-1}\rho \, \text{ and } \, Q_i = Q^-_{\rho_i,\theta}(z,t_0).
\end{equation*}
By Hölder's inequality
\begin{equation}
\label{de giorgi estimate 1}
\begin{split}
    & \fiint_{Q_{i+1}} (u-k_{i+1})_+^2 \dx\dt \\
    & \leq \dashint_{t_0-\theta\rho_{i+1}}^{t_0}\left(\dashint_{B\left(z,\rho_{i+1}\right)} (u-k_{i+1})_+^\frac{n}{n-1}\dx\right)^{1-\frac{1}{n}} \left( \frac{\lvert\{u(t)>k_{i+1}\} \cap B(z,\rho) \rvert}{\lvert B(z,\rho)\rvert} \right)^{\frac{1}{n}-\frac{1}{r}} \dt \\
    & \quad \cdot \esssup_{t_0-\theta\rho_{i+1} < t < t_0  } \left(\dashint_{B(z,\rho_{i+1})} (u-k_{i+1})_+^{r}\dx\right)^{\frac{1}{r}}.
\end{split}
\end{equation}
By \eqref{energy estimate} we have
\begin{equation}
\label{estimate 1 for ess sup r term}
\begin{split}
    & \esssup_{t_0-\theta\rho_{i+1} < t < t_0  } \dashint_{B(z,\rho_{i+1})} (u-k_{i+1})_+^{r}\dx \\
    & \leq \frac{c}{\rho\lvert B(z,\rho)\rvert}\iint_{Q_0}(u-k_{i+1})^{r-1}_+ \dx\dt + \frac{c}{\theta\rho\lvert B(z,\rho)\rvert}\iint_{Q_0} (u-k_{i+1})_+^{r}\dx\dt \\
    & \quad + c\frac{\lvert \{u> k_{i+1}\} \cap Q_0\rvert}{\lvert B(z,\rho)\rvert}.
\end{split}
\end{equation}
We estimate
\begin{equation}
\label{de giorgi iteration increasing exponent}
\begin{split}
    \iint_{Q_0}(u-k_{i+1})^{r-1}_+ \dx\dt & \leq \frac{2^{i+1}}{k}\iint_{Q_0}(u-k_i)^{r}_+ \dx\dt \\
    & \leq \frac{2^{i+1}}{\xi\theta}\iint_{Q_0}(u-k_0)^{r}_+ \dx\dt,
\end{split}
\end{equation}
where we used \eqref{k first choice}. Thus \eqref{estimate 1 for ess sup r term} becomes
\begin{align*}
    \esssup_{t_0-\theta\rho_{i+1} < t < t_0  }  \dashint_{B(z,\rho_{i+1})} (u-k_{i+1})_+^{r}\dx\leq c 4^i \left( 1+\frac{1}{\xi} \right) \left( \fiint_{Q_0} (u-k_0)_+^{r}\dx\dt + \theta\rho \right).
\end{align*}
Inserting this estimate in \eqref{de giorgi estimate 1} we so far have
\begin{equation}
\label{de giorgi estimate 2}
\begin{split}
& \fiint_{Q_{i+1}} (u-k_{i+1})_+^2 \dx\dt \\
    & \leq c 4^{\frac{i}{r}} D \bigg( 1+\frac{1}{\xi} \bigg)^\frac{1}{r} \dashint_{t_0-\theta\rho_{i+1}}^{t_0}\left(\dashint_{B\left(z,\rho_{i+1}\right)} (u-k_{i+1})_+^\frac{n}{n-1}\dx\right)^{1-\frac{1}{n}} \dt \\
    & \quad \cdot \esssup_{t_0-\theta\rho_{i+1} <t<t_0} \left( \frac{\lvert\{u(t)>k_{i+1}\} \cap B(z,\rho) \rvert}{\lvert B(z,\rho)\rvert} \right)^{\frac{1}{n}-\frac{1}{r}},
\end{split}
\end{equation}
where 
\begin{equation*}
    D = \left(\fiint_{Q_0} (u-k_{0})_+^{r}\dx\dt+\theta\rho\right)^\frac{1}{r}.
\end{equation*}
Estimating
\begin{equation}
    \label{estimate for level set}
    \lvert\{u(t)>k_{i+1}\} \cap B(z.\rho_{i+1})\rvert \leq \frac{4^{i+1}}{k^2}\int_{B(z,\rho_{i+1})}(u-k_i)^2_+(t)\dx,
\end{equation}
in \eqref{de giorgi estimate 2} we have
\begin{equation}
\label{de giorgi estimate 3}
\begin{split}
    & \fiint_{Q_{i+1}} (u-k_{i+1})_+^2 \dx\dt \\
    & \leq c \left(4^{1+\frac{1}{n}-\frac{1}{r}}\right)^i D \left( 1+\frac{1}{\xi} \right)^\frac{1}{r}  k^{-2\left(\frac{1}{n}-\frac{1}{r}\right)}\dashint_{t_0-\theta\rho_{i+1}}^{t_0}\left(\dashint_{B\left(z,\rho_{i+1}\right)} (u-k_{i+1})_+^\frac{n}{n-1}\dx\right)^{1-\frac{1}{n}} \dt \\
    & \quad \cdot \left( \esssup_{t_0-\theta\rho_{i+1} < t < t_0  } \dashint_{B(z,\rho_{i+1})} (u-k_{i+1})_+^{2}\dx\right)^{\frac{1}{n}-\frac{1}{r}}. 
\end{split}
\end{equation}
Applying the Sobolev inequality and \eqref{energy estimate} we find that 
\begin{align*}
    & \dashint_{t_0-\theta\rho_{i+1}}^{t_0}\left(\dashint_{B\left(z,\rho_{i+1}\right)} (u-k_{i+1})_+^\frac{n}{n-1}\dx\right)^{1-\frac{1}{n}} \dt \\
    & \leq c 2^i\fiint_{Q_i}(u-k_{i+1})_+ \dx\dt + \frac{c 2^i}{\theta}\fiint_{Q_i} (u-k_{i+1})_+^{2}\dx\dt+ c \rho \frac{\lvert Q_i\cap \{u> k_{i+1}\}\rvert}{\lvert Q_i\rvert } \\
    & \leq \frac{c 4^i}{\theta\xi}\fiint_{Q_i}(u-k_{i})_+^2 \dx\dt + \frac{c}{\theta\rho}\fiint_{Q_i} (u-k_i)_+^{2}\dx\dt.
\end{align*}
Here, we used \eqref{k first choice} and \eqref{estimate for level set} again. Moreover, \eqref{energy estimate} gives
\begin{align*}
    \esssup_{t_0-\theta\rho_{i+1} < t < t_0  } \dashint_{B(z,\rho_{i+1})} (u-k_{i+1})_+^{2}\dx \leq c \left( 1+\frac{1}{\xi} \right) 4^i\fiint_{Q_i}(u-k_{i})_+^2 \dx\dt.
\end{align*}
Using these estimates in \eqref{de giorgi estimate 3} we arrive at
\begin{equation}
\label{de giorgi estimate 4}
\begin{split}
    & \fiint_{Q_{i+1}} (u-k_{i+1})_+^2 \dx\dt \\
    & \leq c b^i \frac{\left( 1+\frac{1}{\xi} \right)^{1+\frac{1}{n}}}{\theta}  D k^{-2(1/n-1/r)}\left(\fiint_{Q_i}(u-k_{i})_+^2 \dx\dt\right)^{1+\frac{1}{n}-\frac{1}{r}}. 
\end{split}
\end{equation}
Denote
\begin{equation*}
    Y_i=\fiint_{Q_{i}} (u-k_{i})_+^2 \dx\dt.
\end{equation*}
Then \cite[Lemma 5.1]{DiBeGV2012} gives $Y_i\to 0$ as $i\to \infty$ if 
\begin{equation*}
    Y_{1}\leq C\bigg( D \frac{( 1+\frac{1}{\xi} )^{1+\frac{1}{n}}}{\theta} \bigg)^{-\frac{nr}{r-n}}k^{2}
\end{equation*}
On the other hand, arguing as in \eqref{de giorgi iteration increasing exponent} we find that 
\begin{equation}
    \label{condition for geometric conv lemma}
    Y_1 \leq c k^{2-r} \fiint_{Q_0} (u-k_0)_+^r \dx\dt
\end{equation}
so that $Y_i\to 0$ as $i\to \infty$ if 
\begin{equation*}
    k^r \geq C \bigg( D \frac{( 1+\frac{1}{\xi} )^{1+\frac{1}{n}}}{\theta} \bigg)^{\frac{nr}{r-n}}\fiint_{Q_0} (u-k_0)_+^r \dx\dt.
\end{equation*}
We estimate
\begin{align*}
    D^{\frac{nr}{r-n}}\fiint_{Q_0} (u-k_0)_+^r \dx\dt & =  \left(\fiint_{Q_0} (u-k_{0})_+^{r}\dx\dt+\theta\rho\right)^\frac{n}{r-n}\fiint_{Q_0} (u-k_0)_+^r \dx\dt \\
    & \leq \left( \fiint_{Q_0} (u-k_0)_+^r +\theta\rho \dx\dt \right)^{\frac{r}{r-n}} 
\end{align*}
And conclude that 
\begin{equation*}
    k \geq C \bigg( \frac{( 1+\frac{1}{\xi} )^{1+\frac{1}{n}}}{\theta} \bigg)^{\frac{n}{r-n}}\left(\fiint_{Q_0} (u-k_0)_+^r \dx\dt+\theta\rho\right)^\frac{1}{r-n},
\end{equation*}
is sufficient to guarantee \eqref{condition for geometric conv lemma}. On the other hand, keeping in mind \eqref{k first choice} we choose 
\begin{equation*}
    k = C \bigg( \frac{( 1+\frac{1}{\xi} )^{1+\frac{1}{n}}}{\theta} \bigg)^{\frac{n}{r-n}}\left(\fiint_{Q_0} (u-k_0)_+^r \dx\dt+\theta\rho\right)^\frac{1}{r-n}+\rho+\xi\theta.
\end{equation*}
and the convergence $Y_i\to 0$ as $i\to \infty$ shows \eqref{ess sup estimate}.
\end{proof}
Let us make some comments on the method used to prove \Cref{prop: local boundedness}. The technique is an adaptation of the De Giorgi iteration, in particular it is similar to techniques used for the singular parabolic p-Laplace equation \cite[Prop. A.2.1]{DiBeGV2012} and the total variation flow \cite[Prop. B.1]{DiBeGiaKla2017}. However, there is a crucial difference. In \cite{DiBeGV2012,DiBeGiaKla2017}, the iteration is done for solutions which are a priori assumed to be qualitatively locally bounded. Based on approximations of solutions with locally bounded solutions, this can then be upgraded to a similar statement as in \Cref{prop: equivalence for differentiable functionals}. Our technique avoids this assumption. The downside is that the resulting estimate is nonoptimal. Once the local boundedness is established, applying the technique of \cite{DiBeGV2012,DiBeGiaKla2017} shows a better estimate, which is of the form  
\begin{align*}
    & \esssup_{Q^-_{\rho/2,\theta}(z,t_0)} (u-k_0)_+ \\ 
    & \leq c \bigg( \frac{( 1+\frac{1}{\xi} )^{1+\frac{1}{n}}}{\theta} \bigg)^{\frac{n}{r-n}}\left(\fiint_{Q_0} (u-k_0)_+^r \dx\dt\right)^\frac{1}{r-n}+\rho+\xi\theta.
\end{align*}

Let us also remark that an assumption on the integrability of solutions is necessary for local boundedness. A straightforward calculation as in \cite{DiBeGiaKla2017} shows that 
\begin{equation*}
    u(x,t)=(1-t)_+ \frac{n-1}{\lvert x\rvert},
\end{equation*}
is a solution to \eqref{TVF} in $\R^n\times(0,\infty)$, but $u$ is not locally bounded. 
\begin{remark}
A function satisfying the conclusion of \Cref{prop: local boundedness} has an upper semicontinuous representative. More precisely, the function 
\begin{equation*}
    \Tilde u (x,t) = \lim_{\rho\to 0} \esssup_{Q^-_{\rho,\theta}(x,t)} u
\end{equation*}
agrees with $u$ a.e. It is straightforward to verify that $\Tilde u$ is upper semicontinuous. To see that $u=\Tilde u$ a.e., let $(x,t)$ be a Lebesgue point of $u$. Applying \eqref{ess sup estimate} with $k_0=u(x,t)$ shows that
\begin{align*}
    & \esssup_{Q^-_{\rho,\theta}(x,t)} u - u(x,t) \\
    & \leq c \bigg( \frac{( 1+\frac{1}{\xi} )^{1+\frac{1}{n}}}{\theta} \bigg)^{\frac{n}{r-n}}\left(\fiint_{Q_0} (u-u(x,t))_+^r \dx\dt+\theta\rho\right)^\frac{1}{r-n}+\rho+\xi\theta.
\end{align*}
Since $(x,t)$ is a Lebesgue point we let $\rho\to 0$ and conclude
\begin{equation*}
    \Tilde u(x,t)-u(x,t)\leq \xi\theta,
\end{equation*}
and letting $\xi\to 0$ we have $\Tilde u(x,t)\leq u(x,t)$. On the other hand, $u(x,t)\leq \Tilde u(x,t)$ clearly holds a.e., which shows the claim. 
\end{remark}

\textbf{Data availability statement} The manuscript has no associated data.

\textbf{Conflict of interest statement} On behalf of all authors, the corresponding author states that there is no conflict of interest.

\end{document}